\RequirePackage[2022-11-01]{latexrelease}
\ifdefined\reviewversion
\documentclass[review]{siamart171218}
\else
\documentclass{siamart171218}
\fi

\usepackage{amsfonts}
\usepackage{mathtools}

\makeatletter
\@ifundefined{cref@override@label@type}{}{
  \def\cref@override@label@type#1\@nil#2{#1}%
}
\makeatother

\newsiamremark{assumption}{Assumption}
\newsiamremark{remark}{Remark}

\newcommand{\dt}{\Delta t}
\newcommand{\eps}{\varepsilon}
\newcommand{\bfu}{\boldsymbol u}
\newcommand{\bfv}{\boldsymbol v}
\newcommand{\bfw}{\boldsymbol w}

\newcommand{\calE}{\mathcal E}
\newcommand{\calG}{\mathcal G}
\newcommand{\calR}{\mathcal R}
\newcommand{\calN}{\mathcal N}
\newcommand{\calC}{\mathcal C}
\newcommand{\norm}[1]{\left\lVert #1\right\rVert}

\title{An Energy-Stable Implicit Convex-Splitting BDF2 Scheme for the Cahn--Hilliard--Navier--Stokes Equations}
\author{Xuelong Gu\thanks{Department of Mathematics, University of South Carolina, Columbia, SC, 29208, USA.}
\and Qi Wang\thanks{Department of Mathematics, University of South Carolina, Columbia, SC, 29208, USA. Corresponding author. E-mail address: \texttt{QWANG@math.sc.edu}.}}
\headers{An Energy-Stable Implicit CS-BDF2 Scheme for CHNS}{X. Gu and Q. Wang}

\begin{document}
\maketitle

\begin{abstract}
	We develop an energy-stable implicit convex-splitting BDF2 discretization (CS-BDF2) of the Cahn--Hilliard--Navier--Stokes equations.  For the Cahn--Hilliard equation, BDF2 analyses \cite{YanChenWangWise2018,ChenWangYanZhang2019,LiaoJiWangZhang2022} can establish energy stability by testing the phase equation in the \(H^{-1}\) metric.  For CHNS, this test is not compatible with the coupled energy estimate: the momentum equation is tested by \(\bfu^{n+1}\), while the transported phase equation is tested by \(\mu^{n+1}\) so that transport cancels capillary work.  The chemical-potential relation must then be paired with the BDF2 phase increment \((3\phi^{n+1}-4\phi^n+\phi^{n-1})/2\); its nonlinear part must produce a BDF2 bulk-energy difference, up to nonnegative higher-order history terms.  To overcome this difficulty, we introduce a new BDF2-compatible convex-splitting approximation of the nonlinear bulk force that directly yields a discrete bulk-energy identity and enables a discrete energy analysis for the CHNS system. Specifically, we discretize the bulk force \(f(\phi)=\phi^3-\phi\) by
	\[
		\chi(\phi^{\dagger,n+1},\phi^{\dagger,n})-\phi^{*,n+1}, \quad \chi(a,b)=\tfrac14(a^2+b^2)(a+b),
	\]
	where
	\[
		\phi^{\dagger,n+1}=\tfrac{3\phi^{n+1}-\phi^n}{2}, \quad \phi^{\dagger,n}=\tfrac{3\phi^n-\phi^{n-1}}{2}, \quad \phi^{*,n+1}=2\phi^n-\phi^{n-1}.
	\]
	This discretization is based on the shifted BDF2 identity \((3\phi^{n+1}-4\phi^n+\phi^{n-1})/2=\phi^{\dagger,n+1}-\phi^{\dagger,n}\).  With a matching discretization of the reversible coupling terms in CHNS, the scheme is mass conservative, uniquely solvable, and unconditionally energy stable.  We prove second-order convergence for the phase variable, chemical potential, velocity, and pressure.
\end{abstract}

\begin{keywords}
	Cahn--Hilliard--Navier--Stokes, convex splitting, BDF2,  energy stability, convergence analysis
\end{keywords}

\begin{AMS}
	65M06, 65M12, 65M15, 76D05, 35Q35
\end{AMS}

\section{Introduction}

The Cahn--Hilliard--Navier--Stokes (CHNS) equations are a basic
diffuse-interface model for two-phase incompressible flows. They are used to
describe interfacial motion, coarsening, topology changes, and related binary
fluid phenomena without explicitly tracking a sharp interface
\cite{AndersonMcFaddenWheeler1998,KimKangLowengrub2003,Kim2012}. A central
feature of the model is its energy-dissipation structure: kinetic, interfacial,
and bulk energies are coupled through a thermodynamically consistent law. It is
therefore important for numerical discretizations to preserve this structure,
especially in long-time simulations where artificial energy production may
distort interface dynamics. This consideration has motivated many
structure-preserving and energy-stable schemes for phase-field and
hydrodynamic phase-field models, including convex-splitting, stabilized,
decoupled, and auxiliary-variable approaches
\cite{Eyre1998,ShenYang2010,Feng-2013,EQ1,ShenYang2015,HanWang2015,DiegelWangWangWise2017,Gong-2016-Binary,Gong-2018-CHNS,Chen-2020-CHNS,Zhao-2021-CHNS,LiShen2020,LiShen2022MSAV}.

In this paper we consider the matched-density CHNS equations on the rectangle
\(\Omega=(0,L_x)\times(0,L_y)\):
\begin{subequations}\label{eq:continuous-chns}
	\begin{align}
		\partial_t\phi + \nabla\cdot(\phi\bfu) - \Delta\mu                               & =0, \label{eq:continuous-phase}                                          \\
		\mu                                                                              & = -\eps^2\Delta\phi + f(\phi),\quad f(s)=s^3-s, \label{eq:continuous-mu} \\
		\partial_t\bfu + (\bfu\cdot\nabla)\bfu -\nu\Delta\bfu + \nabla p + \phi\nabla\mu & =0, \label{eq:continuous-momentum}                                       \\
		\nabla\cdot\bfu                                                                  & =0. \label{eq:continuous-div}
	\end{align}
\end{subequations}
We impose the homogeneous boundary conditions
\begin{equation}\label{eq:continuous-bc}
	\bfu=0,
	\quad \partial_{\mathbf n}\phi=\partial_{\mathbf n}\mu=0
	\quad\text{on }\partial\Omega,
\end{equation}
and normalize the pressure by \(\int_\Omega p\,dx=0\). This normalization fixes
the additive pressure constant and is consistent with the compatibility condition
for the Neumann pressure Poisson equation.
Its free energy is
\begin{equation}\label{eq:continuous-energy}
	\calE(\phi,\bfu)=\int_\Omega
	\left\{\tfrac12|\bfu|^2+\tfrac{\eps^2}{2}|\nabla\phi|^2+F(\phi)\right\}\,dx,
	\quad
	F(s)=\tfrac14(s^2-1)^2.
\end{equation}
Testing the phase equation by \(\mu\), the chemical-potential relation by
\(\partial_t\phi\), and the momentum equation by \(\bfu\) gives
\begin{equation}\label{eq:continuous-energy-law}
	\tfrac{d}{dt}\calE(\phi,\bfu)+\norm{\nabla\mu}_{L^2}^2
	+\nu\norm{\nabla\bfu}_{L^2}^2=0.
\end{equation}
Here \((\nabla\cdot(\phi\bfu),\mu)=-(\phi\bfu,\nabla\mu)\), which cancels
\((\phi\nabla\mu,\bfu)\) in the momentum equation. The boundary conditions
\eqref{eq:continuous-bc} remove the boundary contributions in the viscous,
chemical-potential, and transport terms.

This work focuses on BDF2 convex-splitting discretizations. For the
Cahn--Hilliard equation, existing nonlinear BDF2 treatments use two related
forms of the convex-splitting force. The first is the fully implicit force,
which treats both the cubic and the linear concave part at the new time:
\begin{equation}\label{eq:intro-existing-bdf2-forces}
	(\phi^{n+1})^3-\phi^{n+1}.
\end{equation}
This force \eqref{eq:intro-existing-bdf2-forces} appears in the variable-step
BDF2 discretization for nonlocal Cahn--Hilliard models and space-fractional
variants \cite{XueZhaiZhao2024}. The second is the extrapolated force:
\begin{equation}\label{eq:intro-existing-bdf2-extrapolated-force}
	(\phi^{n+1})^3-\phi^{*,n+1},
	\quad \phi^{*,n+1}=2\phi^n-\phi^{n-1}.
\end{equation}
This force \eqref{eq:intro-existing-bdf2-extrapolated-force}, in the cited
schemes, closes the energy estimate with an additional
stabilization parameter
\cite{YanChenWangWise2018,ChenWangYanZhang2019,LiaoJiWangZhang2022,HuCheng2024}.
The choices \eqref{eq:intro-existing-bdf2-forces} and
\eqref{eq:intro-existing-bdf2-extrapolated-force} are effective for the scalar
Cahn--Hilliard gradient flow, but the available energy estimates rely on the
closed \(H^{-1}\) structure of the phase equation.
Schematically, the corresponding scalar BDF2 update has the form
\begin{equation}\label{eq:intro-pure-ch-time}
	D_{2\tau}\phi^{n+1}=\Delta\mu^{n+1},
	\quad
	\mu^{n+1}=-\eps^2\Delta\phi^{n+1}+g^{n+1},
\end{equation}
where
\[
	D_{2\tau}\phi^{n+1}=\tfrac{3\phi^{n+1}-4\phi^n+\phi^{n-1}}{2\dt}.
\]
Here \(g^{n+1}\) denotes one of the nonlinear approximations in
\eqref{eq:intro-existing-bdf2-forces} or
\eqref{eq:intro-existing-bdf2-extrapolated-force}, with stabilization when
needed. For the scalar Cahn--Hilliard update \eqref{eq:intro-pure-ch-time}, the
energy proof can proceed in the \(H^{-1}\) metric: introduce \(\psi^{n+1}\) by
\(-\Delta\psi^{n+1}=D_{2\tau}\phi^{n+1}\), test the phase equation by
\(\psi^{n+1}\), and test the chemical-potential relation by
\(D_{2\tau}\phi^{n+1}\). Adding the two identities gives the \(H^{-1}\)
dissipation together with the BDF2 bulk-energy difference. In CHNS this scalar
argument is not closed, because the phase equation is replaced by the
transported equation
\begin{equation}\label{eq:intro-chns-phase-time}
	D_{2\tau}\phi^{n+1}
	+\nabla\cdot(\phi^{*,n+1}\bfu^{n+1})
	=\Delta\mu^{n+1} .
\end{equation}
The CHNS energy estimate must instead be a coupled estimate: the transported
phase equation \eqref{eq:intro-chns-phase-time} is tested by \(\mu^{n+1}\), the
momentum equation is tested by
\(\bfu^{n+1}\), and the transport contribution is paired with the capillary
force through
\begin{equation}\label{eq:intro-transport-capillary-cancel-time}
	\bigl(\nabla\cdot(\phi^{*,n+1}\bfu^{n+1}),\mu^{n+1}\bigr)
	+\bigl(\phi^{*,n+1}\nabla\mu^{n+1},\bfu^{n+1}\bigr)=0 .
\end{equation}
For this reason, BDF2 convex-splitting estimates for the Cahn--Hilliard
equation do not transfer directly to CHNS. The nonlinear part of the chemical
potential must produce a BDF2 bulk-energy difference under the pairing with
\(\dt D_{2\tau}\phi^{n+1}\), while the phase and momentum equations must retain
the transport--capillary cancellation
\eqref{eq:intro-transport-capillary-cancel-time}.

Motivated by this energy-pairing requirement, we modify the discretization of
the bulk force in the chemical potential. This is the point at which the
present scheme differs from the BDF2 Cahn--Hilliard forces in
\eqref{eq:intro-existing-bdf2-forces} and
\eqref{eq:intro-existing-bdf2-extrapolated-force}: the cubic part is evaluated
through two shifted BDF2 states chosen so that their difference is exactly the
BDF2 phase increment. Define
\begin{equation}\label{eq:intro-shifted-bdf2-difference-time}
	\phi^{\dagger,n+1}:=\tfrac{3\phi^{n+1}-\phi^n}{2},
	\quad
	\phi^{\dagger,n}:=\tfrac{3\phi^n-\phi^{n-1}}{2},
	\quad
	\dt D_{2\tau}\phi^{n+1}=\phi^{\dagger,n+1}-\phi^{\dagger,n} .
\end{equation}
Consequently, in the bulk force \eqref{eq:intro-bdf2-force-time}, the two
arguments in the quartic part are chosen as the shifted BDF2 states
\(\phi^{\dagger,n+1}\) and \(\phi^{\dagger,n}\), not \(\phi^{n+1}\) and
\(\phi^n\). With
\[
	F(s)=\tfrac14(s^2-1)^2,
	\quad
	\chi(a,b)=\tfrac14(a^2+b^2)(a+b),
\]
the bulk force used here is
\begin{equation}\label{eq:intro-bdf2-force-time}
	\chi(\phi^{\dagger,n+1},\phi^{\dagger,n})-\phi^{*,n+1} .
\end{equation}
This is a second-order approximation of \(\phi^3-\phi\). It satisfies the
following exact BDF2 bulk-energy identity:
\begin{align}\label{eq:intro-bdf2-local-chain-time}
	 & \left(
	\chi(\phi^{\dagger,n+1},\phi^{\dagger,n})-\phi^{*,n+1},
	\dt D_{2\tau}\phi^{n+1}
	\right) \notag                                                  \\
	 & \quad=
	\left(F(\phi^{\dagger,n+1})-F(\phi^{\dagger,n}),1\right) \notag \\
	 & \quad
	+\tfrac38\left(
	\norm{\phi^{n+1}-\phi^n}^2
	-\norm{\phi^n-\phi^{n-1}}^2
	\right)
	+\tfrac34\norm{\phi^{n+1}-2\phi^n+\phi^{n-1}}^2 .
\end{align}
The identity \eqref{eq:intro-bdf2-local-chain-time} is the key ingredient
that is not obtained by directly inserting the scalar BDF2 Cahn--Hilliard
forces
\eqref{eq:intro-existing-bdf2-forces}--\eqref{eq:intro-existing-bdf2-extrapolated-force}
into the coupled CHNS estimate. In the scalar Cahn--Hilliard equation, the
BDF2 energy argument can be closed in the \(H^{-1}\) metric: the phase
equation is tested by an inverse-Laplacian increment, while the
chemical-potential equation is paired with the one-step increment
\(\phi^{n+1}-\phi^n\). This testing is compatible with the gradient-flow
structure of scalar CH, but it does not preserve the transport--capillary
cancellation in CHNS. For CHNS, the phase equation must instead be tested by \(\mu^{n+1}\), so that the capillary work cancels with the momentum equation as in
\eqref{eq:intro-transport-capillary-cancel-time}. Consequently, the
chemical-potential relation itself must provide the local BDF2 chain rule
\eqref{eq:intro-bdf2-local-chain-time} when paired with
\(\dt D_{2\tau}\phi^{n+1}\). This requirement motivates the force
\eqref{eq:intro-bdf2-force-time}. It is a shifted-state BDF2 analogue of a
two-point discrete-gradient treatment of the double-well potential, based on
\eqref{eq:intro-shifted-bdf2-difference-time}. The \(\chi\)-term is the
discrete gradient of the convex quartic part between the shifted states, while
the extrapolated term \(-\phi^{*,n+1}\) treats the concave quadratic part.
Together they yield \eqref{eq:intro-bdf2-local-chain-time}, with a telescoping
BDF2 history term and a nonnegative remainder.

We then discretize time by CS-BDF2 and use a staggered-grid spatial discretization
on a bounded rectangular grid. The scheme conserves mass, is uniquely solvable after fixing the pressure
mean, and satisfies an unconditional discrete energy law. We prove second-order
convergence for the two-dimensional setting considered here.

The rest of the paper is organized as follows. Section~2 introduces the notations of  staggered grid
and some useful lemmas. Section~3 presents the scheme and its solvability, energy stability. Section~4 gives the convergence result. Section~5 reports numerical
experiments, and Section~6 concludes.

\section{Preliminaries}\label{sec:preliminaries}

Let \(\Omega=(0,L_x)\times(0,L_y)\), \(h_x=L_x/N_x\), and
\(h_y=L_y/N_y\). Following
\cite{HarlowWelch1965,WeiserWheeler1988,GongZhaoWang2018,RuiLi2017,LiShen2020NS,LiShen2020},
we define the following sets of grid functions on the staggered-grid
\begin{equation*}
	\begin{aligned}
		\mathsf C_h
		=\{\varphi_{i+\frac12,j+\frac12}:0\le i<N_x,
		0\le j<N_y\},                                              \
		\mathsf V_h
		=\{g_{i,j}:0\le i\le N_x,
		0\le j\le N_y\}, \\
		\mathsf E_h^x
		=\{u_{i,j+\tfrac12}:1\le i<N_x,
		0\le j<N_y\},                                              \
		\mathsf E_h^y
		=\{v_{i+\tfrac12,j}:0\le i<N_x,
		1\le j<N_y\}.    \\
	\end{aligned}
\end{equation*}
We further define the product space $\mathsf E_h = \mathsf E_h^x \times \mathsf E_h^y$  The phase variable $\phi_h$, chemical potential $\mu_h$, and pressure $p_h$ are defined on $\mathsf C_h$, while the velocity
$\mathbf u_h=(u_{1,h},u_{2,h}) \in \mathsf E_h$.
Throughout the paper, velocity boundary and ghost values are chosen to enforce
the no-slip condition, whereas scalar boundary and ghost values are determined
by the no-flux condition.

For \(u_h\in\mathsf E_h^x\), \(v_h\in\mathsf E_h^y\),
\(g_h\in\mathsf V_h\), and \(\varphi_h\in\mathsf C_h\), define the following difference quotient operator
\begin{equation}\label{eq:mac-averages}
	\begin{aligned}
		 & (a_xu_h)_{i+\frac12,j+\frac12}
		=\tfrac{u_{i+1,j+\frac12}+u_{i,j+\frac12}}{2},
		 (d_xu_h)_{i+\frac12,j+\frac12}
		=\tfrac{u_{i+1,j+\frac12}-u_{i,j+\frac12}}{h_x},
		 (A_x\varphi_h)_{i,j+\frac12}
		=\tfrac{\varphi_{i+\frac12,j+\frac12}
			 +\varphi_{i-\frac12,j+\frac12}}{2},          \\
		 & (D_x\varphi_h)_{i,j+\frac12}
		=\tfrac{\varphi_{i+\frac12,j+\frac12}
			 -\varphi_{i-\frac12,j+\frac12}}{h_x},
		 (A_xv_h)_{i,j}
		=\tfrac{v_{i+\frac12,j}+v_{i-\frac12,j}}{2},
		 (D_xv_h)_{i,j}
		=\tfrac{v_{i+\frac12,j}-v_{i-\frac12,j}}{h_x}, \\[2mm]
		 & (a_yv_h)_{i+\frac12,j+\frac12}
		=\tfrac{v_{i+\frac12,j+1}+v_{i+\frac12,j}}{2},
		 (d_yv_h)_{i+\frac12,j+\frac12}
		=\tfrac{v_{i+\frac12,j+1}-v_{i+\frac12,j}}{h_y},
		 (A_y\varphi_h)_{i+\frac12,j}
		=\tfrac{\varphi_{i+\frac12,j+\frac12}
			 +\varphi_{i+\frac12,j-\frac12}}{2},          \\
		 & (D_y\varphi_h)_{i+\frac12,j}
		=\tfrac{\varphi_{i+\frac12,j+\frac12}
			 -\varphi_{i+\frac12,j-\frac12}}{h_y},
		 (A_yu_h)_{i,j}
		=\tfrac{u_{i,j+\frac12}+u_{i,j-\frac12}}{2},
		 (D_yu_h)_{i,j}
		=\tfrac{u_{i,j+\frac12}-u_{i,j-\frac12}}{h_y}, \\[2mm]
		 & (a_xg_h)_{i+\frac12,j}
		=\tfrac{g_{i+1,j}+g_{i,j}}{2},
		 (d_xg_h)_{i+\frac12,j}
		=\tfrac{g_{i+1,j}-g_{i,j}}{h_x},
		 (a_yg_h)_{i,j+\frac12}
		=\tfrac{g_{i,j+1}+g_{i,j}}{2},                 \\
		 & (d_yg_h)_{i,j+\frac12}
		=\tfrac{g_{i,j+1}-g_{i,j}}{h_y}.
	\end{aligned}
\end{equation}
Set
\begin{equation}\label{eq:mac-grad-div}
	\begin{aligned}
		D_h\varphi_h & =(D_x\varphi_h,D_y\varphi_h),\quad
		d_h\bfv_h=d_xv_{1,h}+d_yv_{2,h},\quad
		A_h\varphi_h=-d_hD_h\varphi_h,                    \\
		A_h\bfv_h    & =\bigl(-(D_xd_x+d_yD_y)v_{1,h},
		-(d_xD_x+D_yd_y)v_{2,h}\bigr) .
	\end{aligned}
\end{equation}
For scalar-vector products,
\begin{equation}\label{eq:mac-product-def}
	(\varphi_h\bfv_h)_{1,i,j+\frac12}
	=(A_x\varphi_h)_{i,j+\tfrac12}v_{1,i,j+\frac12}, \quad
	(\varphi_h\bfv_h)_{2,i+\frac12,j}
	=(A_y\varphi_h)_{i+\frac12,j}v_{2,i+\frac12,j}.
\end{equation}

The discrete inner products are
\begin{equation}\label{eq:mac-inner-products}
	\begin{aligned}
		(\varphi_h,\psi_h)_c
		 & =h_xh_y\sum_{i,j}\varphi_{i+\frac12,j+\frac12}
		\psi_{i+\frac12,j+\frac12},
		 &
		(r_h,s_h)_x
		 & =h_xh_y\sum_{i,j}r_{i,j+\frac12}s_{i,j+\frac12},\notag \\
		(r_h,s_h)_y
		 & =h_xh_y\sum_{i,j}r_{i+\frac12,j}s_{i+\frac12,j},
		 &
		(r_h,s_h)_v
		 & =h_xh_y\sum_{i,j}r_{i,j}s_{i,j}.
	\end{aligned}
\end{equation}
For vectors, \((\bfu_h,\bfv_h)_h=(u_{1,h},v_{1,h})_x+(u_{2,h},v_{2,h})_y\);
for scalars, \((\cdot,\cdot)_h=(\cdot,\cdot)_c\).
Set \(\mathsf C_{h,0}:=\{\varphi_h\in\mathsf C_h:(\varphi_h,1)_h=0\}\).
For a scalar grid function \(q_h\), with the index range understood from its grid location, set
\begin{equation}\label{eq:mac-lp-norms}
	\norm{q_h}_{\ell^p}^p=h_xh_y\sum_{i,j}|q_{i,j}|^p,
	\ 1\le p<\infty,
	\quad
	\norm{q_h}_{\ell^\infty}=\max_{i,j}|q_{i,j}|.
\end{equation}
For \(\boldsymbol z_h=(z_{1,h},z_{2,h})\in\mathsf E_h\), define
\begin{equation}\label{eq:mac-vector-lp-norms}
	\norm{\boldsymbol z_h}_{\ell^p}^p
	=\norm{z_{1,h}}_{\ell^p}^p+\norm{z_{2,h}}_{\ell^p}^p,
	\ 1\le p<\infty,
	\quad
	\norm{\boldsymbol z_h}_{\ell^\infty}
	=\max\{\norm{z_{1,h}}_{\ell^\infty},\norm{z_{2,h}}_{\ell^\infty}\}.
\end{equation}
We further introduce
\begin{equation}\label{eq:mac-vector-h1}
	\norm{D_h\bfv_h}_h^2
	=\norm{d_xv_{1,h}}_c^2+\norm{D_yv_{1,h}}_v^2
	+\norm{D_xv_{2,h}}_v^2+\norm{d_yv_{2,h}}_c^2 .
\end{equation}
For \(\varphi_h\in\mathsf C_{h,0}\) and
\(\bfv_h\in\mathsf E_h\), define the negative norms by
\begin{equation}\label{eq:hminusone-def}
	\norm{\varphi_h}_{H_h^{-1}}
	=\sup_{\psi_h\in\mathsf C_{h,0},\,\psi_h\ne0}
	\tfrac{(\varphi_h,\psi_h)_h}{\norm{D_h\psi_h}_h},\quad
	\norm{\bfv_h}_{H_h^{-1}}
	=\sup_{\boldsymbol w_h\ne0}
	\tfrac{(\bfv_h,\boldsymbol w_h)_h}{\norm{D_h\boldsymbol w_h}_h}.
\end{equation}

\begin{lemma}[MAC inf-sup condition, \cite{RuiLi2017,LiShen2020NS}]\label{lem:mac-infsup}
	There is a constant \(\beta>0\), independent of \(h\), such that
	\begin{equation*}
		\sup_{\bfv_h\ne0}\tfrac{(q_h,d_h\bfv_h)_h}{\norm{D_h\bfv_h}_h}
		\ge \beta\norm{q_h}_h
		\quad\forall q_h\in\mathsf C_{h,0}.
	\end{equation*}
\end{lemma}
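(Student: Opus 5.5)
Given $q_h\in\mathsf C_{h,0}$, the plan is to build an explicit velocity $\bfv_h\in\mathsf E_h$ with $d_h\bfv_h=q_h$ and $\norm{D_h\bfv_h}_h\le C\norm{q_h}_h$, with $C$ independent of $h$. Then
\[
\sup_{\bfw_h\ne0}\tfrac{(q_h,d_h\bfw_h)_h}{\norm{D_h\bfw_h}_h}\ge \tfrac{(q_h,q_h)_h}{\norm{D_h\bfv_h}_h}\ge C^{-1}\norm{q_h}_h,
\]
so $\beta=C^{-1}$. This is the discrete analogue of the Bogovskii right inverse of the divergence. On a uniform rectangular MAC grid it can be realized by a direct, dimension-by-dimension construction, which avoids the Fortin-operator machinery.

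\emph{Step 1 (the $x$-flux).} Let $\bar q_j=\frac1{N_x}\sum_i q_{i+\frac12,j+\frac12}$ be the row averages. Define $v_1$ on the $x$-faces by the discrete antiderivative
\[
v_{1,i,j+\frac12}=h_x\sum_{k<i}\bigl(q_{k+\frac12,j+\frac12}-\bar q_j\bigr).
\]
Then $v_1$ vanishes at $i=0$ and, because of the subtracted row mean, also at $i=N_x$. This is compatible with the no-slip values, and $d_xv_1=q-\bar q$.

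\emph{Step 2 (the $y$-flux).} The row means satisfy $h_y\sum_j\bar q_j=0$ because $q_h\in\mathsf C_{h,0}$. Take a fixed smooth bump profile $\theta(x)\ge0$ with $\int\theta=L_x$, vanishing near $x=0,L_x$, sampled at cell centers. Set
\[
v_{2,i+\frac12,j}=\theta_{i+\frac12}\,h_y\sum_{l<j}\bar q_l .
\]
This vanishes at $j=0$ and at $j=N_y$, and $d_yv_2=\theta\,\bar q$. Consequently $d_h\bfv_h=q-\bar q+\theta\bar q$. To make this exactly $q$, correct the $x$-flux by the antiderivative of $(1-\theta)\bar q$ in $x$. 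That correction again vanishes at both ends, provided the sampled $\theta$ is normalized so that $\sum_i h_x\theta_{i+\frac12}=L_x$.

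\emph{Step 3 (the $H^1$ bound), which I expect to be the main obstacle.} The derivatives $d_xv_1$ and $d_yv_2$ are bounded by $\norm q_h$ directly. The cross derivatives $D_yv_1$ and $D_xv_2$ are the difficulty, because $D_yv_1$ involves differences of antiderivatives of $q$ in the transverse direction, and these are not controlled by $\norm q$ for rough $q$. The naive construction therefore fails.

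The first remedy I would try is to replace the explicit antiderivatives by the discrete solution of the Neumann problem $A_h\psi=-q$, setting $\bfv^{(1)}=D_h\psi$, which is divergence-matching but violates the tangential no-slip condition. Using discrete $H^2$ regularity on the rectangle, $\norm{\psi}_{H^2_h}\le C\norm q_h$ via discrete Fourier cosine expansion, the gradient $D_h\bfv^{(1)}$ is bounded. Only the tangential traces then need correcting. I would remove them with a discrete stream function $\bfv^{(2)}=\mathrm{curl}_h\zeta$, where $\zeta$ is a cutoff lift of the boundary tangential data into a layer of width $O(h)$. This lift is controlled by the discrete trace inequality $h^{-1/2}\norm{\text{trace}}\lesssim\norm{D_h^2\psi}$, combined with a discrete Hardy-type estimate.

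If that boundary-layer estimate proves delicate, the fallback is to invoke the macroelement/Fortin argument of the cited works \cite{RuiLi2017,LiShen2020NS}. That argument constructs a Fortin interpolant $\Pi_h$ with $(d_h\Pi_h\bfw,q_h)_h=(\nabla\cdot\bfw,q_h)$ via face-averaged normal fluxes, has $H^1$-stability, and reduces the discrete statement to the continuous inf-sup on $\Omega$.
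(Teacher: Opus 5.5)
\textbf{The explicit route fails.} You are right that the antiderivative construction of Steps~1--2 fails. Your remedy fails at the same place, so the explicit route does not prove the lemma.

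With the no-slip ghost values, the wall rows of $\norm{D_h\bfv_h}_h$ contain $(D_yv_1)_{i,0}=2v_{1,i,\frac12}/h_y$. For $\bfv^{(1)}=D_h\psi$ this gives $\norm{D_h\bfv^{(1)}}_h^2\ge 4h_y^{-1}\,h_x\sum_i|(D_x\psi)_{i,\frac12}|^2$. That is $h^{-1}$ times the squared tangential trace of the discrete gradient along the wall. For generic $q_h$ (e.g.\ a sampled $\cos(\pi x/L_x)$) this trace is of order $\norm{q_h}_h$. So discrete $H^2$ regularity controls only interior differences, and $\norm{D_h\bfv^{(1)}}_h$ is of order $h^{-1/2}\norm{q_h}_h$.

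A correction confined to an $O(h)$ layer cannot repair this. The corrected field must pass from zero tangential velocity at the wall to the $O(1)$ tangential velocity of $D_h\psi$ within distance $O(h)$. Its normal difference quotients are then of size $h^{-1}$ on a strip of area $O(h)$, and the norm is again of order $h^{-1/2}\norm{q_h}_h$. Correspondingly, the inequality $h^{-1/2}\norm{\mathrm{trace}}\le C\norm{D_h^2\psi}$ you invoke is false. The trace inequality holds without the factor $h^{-1/2}$; with it, the tangential trace would have to be $O(h^{1/2})$, and no Hardy-type estimate changes this. The Helmholtz-plus-stream-function idea can only work with an $O(1)$-width $H^2$ lift of the Cauchy data ($\zeta=0$, $\partial_n\zeta$ given by the tangential trace of $\nabla\psi$). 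That lift also needs corner compatibility on the rectangle and a discrete version uniform in $h$, which the proposal does not supply.

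\textbf{The fallback is the right proof.} It is also what the paper relies on: the paper gives no argument and only cites \cite{RuiLi2017,LiShen2020NS}. The step you leave unjustified, $H^1$-stability of the interpolant, is short.

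Extend $q_h\in\mathsf C_{h,0}$ to the piecewise constant $q\in L^2_0(\Omega)$, so $\norm{q}_{L^2}=\norm{q_h}_h$. The continuous inf-sup condition on the rectangle (Bogovskii/Ne\v{c}as) gives $\bfw\in H^1_0(\Omega)^2$ with $\nabla\cdot\bfw=q$ and $\norm{\bfw}_{H^1}\le C_\Omega\norm{q}_{L^2}$. Define $\Pi_h\bfw\in\mathsf E_h$ by face averages, $(\Pi_h\bfw)_{1,i,j+\frac12}=h_y^{-1}\int_{y_j}^{y_{j+1}}w_1(x_i,y)\,dy$, and analogously for $w_2$. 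The divergence theorem on each cell gives $d_h\Pi_h\bfw=q_h$ exactly. Also $d_x(\Pi_h\bfw)_1$ and $d_y(\Pi_h\bfw)_2$ are cell averages of $\partial_xw_1$ and $\partial_yw_2$, hence bounded by $\norm{\nabla\bfw}_{L^2}$.

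For the cross differences, compare a face average $\bar w_e$ with the average $\bar w_B$ over an adjacent cell $B$. The scaled trace inequality together with Poincar\'e's inequality on $B$ gives $|\bar w_e-\bar w_B|\le C\norm{\nabla\bfw}_{L^2(B)}$, which is scale invariant in two dimensions. Averages over two adjacent cells differ by at most $C\norm{\nabla\bfw}_{L^2(B\cup B')}$. Hence $h_y|(D_y(\Pi_h\bfw)_1)_{i,j}|\le C\norm{\nabla\bfw}_{L^2(\omega_{i,j})}$ for a patch $\omega_{i,j}$ of $O(1)$ cells. At a wall node the ghost value gives $2\bar w_e/h_y$, and Friedrichs' inequality on $B$ (since $w_1=0$ on the wall) yields the same bound.

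Summing with finite overlap gives $\norm{D_h\Pi_h\bfw}_h\le C\norm{\bfw}_{H^1}$, with $C$ depending only on $h_x/h_y$. So $\bfv_h=\Pi_h\bfw$ is exactly the bounded right inverse of $d_h$ you set out to build, and $\beta=(CC_\Omega)^{-1}$. I would lead with this argument and drop Steps~1--3.
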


\begin{lemma}[\cite{HarlowWelch1965,WeiserWheeler1988,GongZhaoWang2018,LiShen2020}]\label{lem:mac-sbp-identities}
	With assumed boundary condition, the following summation-by-parts
	identities hold:
	\begin{equation*}
		(D_h\varphi_h,\bfv_h)_h =-(\varphi_h,d_h\bfv_h)_h, \
		(A_h\varphi_h,\psi_h)_h           =(D_h\varphi_h,D_h\psi_h)_h, \
		(d_h(\varphi_h\bfv_h),\psi_h)_h  =-(\varphi_h\bfv_h,D_h\psi_h)_h.
	\end{equation*}
\end{lemma}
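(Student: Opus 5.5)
The plan is to verify each of the three identities in Lemma~\ref{lem:mac-sbp-identities} by writing both sides as explicit weighted sums over the staggered grid, applying a one-dimensional summation by parts (index shift) row by row and column by column, and checking that the boundary contributions vanish. The vanishing comes from the no-slip condition for the velocity and the homogeneous Neumann condition for the scalars. The $x$- and $y$-directions decouple, so it suffices to treat the $x$-component and invoke symmetry for the $y$-component.

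\emph{First identity.} Fix $j$ and set $\bfv_h=(v_1,v_2)$. I would write
\[
(D_x\varphi_h,v_1)_x=h_y\sum_{i=1}^{N_x-1}\bigl(\varphi_{i+\frac12,j+\frac12}-\varphi_{i-\frac12,j+\frac12}\bigr)v_{1,i,j+\frac12}.
\]
Shifting the index in the second term gives
\[
-h_y\sum_{i=0}^{N_x-1}\varphi_{i+\frac12,j+\frac12}\bigl(v_{1,i+1,j+\frac12}-v_{1,i,j+\frac12}\bigr)
\]
plus endpoint terms. These endpoint terms involve $v_{1,0,j+\frac12}$ and $v_{1,N_x,j+\frac12}$, which vanish by the no-slip boundary values. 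The remaining sum is exactly $-(\varphi_h,d_xv_1)_c$, and summing over $j$ together with the analogous $y$-computation yields $(D_h\varphi_h,\bfv_h)_h=-(\varphi_h,d_h\bfv_h)_h$.

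\emph{Second identity.} Here $(A_h\varphi_h,\psi_h)_h=-(d_hD_h\varphi_h,\psi_h)_h$. I would apply the first identity in reverse with $\bfv_h=D_h\varphi_h$, read as a face-centered field. The point to check is that $D_x\varphi_h$ evaluated at the boundary faces $i=0,N_x$ is zero. With the ghost-value convention from the no-flux condition, $\varphi_{-\frac12,\cdot}=\varphi_{\frac12,\cdot}$, this holds, so $D_h\varphi_h$ satisfies the same zero-normal-component boundary behavior used above. The first identity then gives $-(d_hD_h\varphi_h,\psi_h)_h=(D_h\varphi_h,D_h\psi_h)_h$.

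\emph{Third identity.} By \eqref{eq:mac-product-def}, $\varphi_h\bfv_h$ is again a face-centered vector field whose normal components on the boundary are $(A_x\varphi_h)v_1=0$ and $(A_y\varphi_h)v_2=0$, because $v_1$ and $v_2$ vanish there. Applying the first identity with $\bfv_h$ replaced by $\varphi_h\bfv_h$ and $\varphi_h$ replaced by $\psi_h$ gives $(d_h(\varphi_h\bfv_h),\psi_h)_h=-(\varphi_h\bfv_h,D_h\psi_h)_h$.

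The only delicate step is the bookkeeping of index ranges and boundary values. The difference quotients in \eqref{eq:mac-averages} and the products in \eqref{eq:mac-product-def} must be interpreted on the full face range, including boundary faces, with the ghost and boundary conventions stated in Section~\ref{sec:preliminaries}. In particular, the second identity needs the Neumann ghost convention to make the boundary-face gradient vanish. Once these conventions are fixed, the argument is a routine discrete integration by parts.
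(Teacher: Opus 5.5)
Your proposal is correct. The paper gives no proof of Lemma~\ref{lem:mac-sbp-identities}; it only cites the MAC literature, where the argument is the same row-by-row index shift you describe, with boundary terms removed by the no-slip face values and the Neumann ghost convention. Your reduction of the second and third identities to the first, by checking that $D_h\varphi_h$ and $\varphi_h\bfv_h$ have vanishing normal components on the boundary faces, is the standard way to finish and is complete.
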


\begin{lemma}[\cite{HarlowWelch1965,GongZhaoWang2018,LiShen2020NS,LiShen2020}]
	\label{lem:mac-convection-form}
	For \(\bfu_h, \bfv_h \in \mathsf E_h \), define
	$\mathcal B_h(\bfu_h,\bfv_h)$ by
	\begin{equation*}
		\mathcal B_h (\bfu_h, \bfv_h) =
		\tfrac12
		\begin{pmatrix}
			u_{1,h}D_x(a_x v_{1,h})
			+A_x\bigl(d_x(u_{1,h}v_{1,h})\bigr) +a_y\bigl(A_x u_{2,h}D_y v_{1,h}\bigr) +d_y\bigl(A_y v_{1,h}A_x u_{2,h}\bigr) \\
			a_x\bigl(A_y u_{1,h}D_x v_{2,h}\bigr)
			+d_x\bigl(A_y u_{1,h}A_x v_{2,h}\bigr) + u_{2,h}D_y(a_y v_{2,h})
			+A_y\bigl(d_y(u_{2,h}v_{2,h})\bigr).
		\end{pmatrix}
	\end{equation*}
	Let
	\[
		b_h(\bfu_h,\bfv_h,\bfw_h)
		:=(\mathcal B_h(\bfu_h,\bfv_h),\bfw_h)_h .
	\]
	It is readily verified that if \(d_h\bfu_h=0\), then
	\begin{equation*}
		b_h(\bfu_h,\bfv_h,\bfw_h)=-b_h(\bfu_h,\bfw_h,\bfv_h),
		\quad
		b_h(\bfu_h,\bfv_h,\bfv_h)=0
		\ \text{for all }\bfv_h,\bfw_h\in\mathsf E_h .
	\end{equation*}
\end{lemma}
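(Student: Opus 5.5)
The claim is that if \(d_h\bfu_h=0\), then \(b_h(\bfu_h,\bfv_h,\bfw_h)=-b_h(\bfu_h,\bfw_h,\bfv_h)\). I would show directly that \(b_h(\bfu_h,\bfv_h,\bfw_h)+b_h(\bfu_h,\bfw_h,\bfv_h)\) is a multiple of \((d_h\bfu_h,\cdot)_h\), so it vanishes when \(d_h\bfu_h=0\). The identity \(b_h(\bfu_h,\bfv_h,\bfv_h)=0\) then follows by taking \(\bfw_h=\bfv_h\).

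The tools are discrete summation-by-parts rules for the averaging and difference operators in \eqref{eq:mac-averages}. With the no-slip ghost values for velocities, the adjoint of \(a_x\) is \(A_x\) and the adjoint of \(d_x\) is \(-D_x\), and likewise in \(y\). For example,
\[
(a_x v,\varphi)_c=(v,A_x\varphi)_x,\qquad (d_x v,\varphi)_c=-(v,D_x\varphi)_x ,
\]
with the analogous pairings between the vertex and edge grids. These are the same one-dimensional identities behind Lemma~\ref{lem:mac-sbp-identities}, and the boundary terms vanish because \(u_{1,h},v_{1,h},w_{1,h}\) are zero at \(i=0,N_x\) and the tangential ghost values are odd reflections.

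I would treat the first component of \(\mathcal B_h\) and split it into its \(x\)-part and \(y\)-part; the second component is symmetric. For the \(x\)-part, moving \(A_x\) and \(d_x\) across the inner product gives
\[
\bigl(A_x d_x(u_1v_1),w_1\bigr)_x=-\bigl(u_1v_1,D_x(a_xw_1)\bigr)_x .
\]
So the \(x\)-part of \(b_h(\bfu,\bfv,\bfw)\) is \(\tfrac12\bigl[(u_1D_x(a_xv_1),w_1)-(u_1v_1,D_x(a_xw_1))\bigr]\). Adding the same expression with \(\bfv,\bfw\) swapped gives zero pointwise, with no need for incompressibility. This uses that \(A_x\) and \(d_x\) commute with their counterparts in the appropriate sense, which I would check index by index.

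For the \(y\)-part, the two terms are \(\tfrac12\,a_y(A_xu_2\,D_yv_1)\) and \(\tfrac12\,d_y(A_yv_1\,A_xu_2)\). I would expand the divergence-form term using the discrete product rule
\[
d_y(fg)=a_y f\, d_y g+d_y f\, a_y g ,
\]
applied at vertex/edge points. Paired with \(w_1\), the term with the \(v\)-derivative combined with its swapped counterpart should cancel antisymmetrically, as in the \(x\)-part. The leftover terms involve derivatives of the transported velocity: \(d_y(A_xu_2)\) from the \(y\)-part and, from the \(x\)-part, \(d_xu_1\) after rewriting \(u_1D_x\) versus \(D_x(u_1\cdot)\). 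These combine to \(A_x(d_h\bfu_h)\) multiplied by a symmetric product such as \(v_1w_1\).

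The main obstacle is this bookkeeping. The \(x\)- and \(y\)-parts live on different grids (cell centers, vertices, edges), so the leftover divergence pieces must be recombined into exactly \(A_x(d_xu_1+d_yu_2)\) times a symmetric bilinear expression in \(v_1,w_1\). I would first work through the one-dimensional analogue and then a single interior stencil, and only then handle the boundary rows using the ghost-value conventions.
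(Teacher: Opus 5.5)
Your $x$-part computation is right. The plan for the rest, however, rests on a wrong picture of why the identity holds, and the step you call the main obstacle is left open with a predicted outcome that cannot occur.

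Your own $x$-calculation already shows that $u_1D_x(a_xv_1)$ and $A_xd_x(u_1v_1)$ are exact adjoint partners. So the $x$-part is antisymmetric in $(\bfv_h,\bfw_h)$ for \emph{every} $\bfu_h$, and no $d_xu_1$ is left over from it, contrary to what you assume later.

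The $y$-part works the same way and should not be split with the product rule. Between $x$-edges and vertices, $a_y$ is adjoint to $A_y$ and $d_y$ is adjoint to $-D_y$. The boundary rows $j=0,N_y$ drop out because $u_2=0$ on the horizontal walls forces $A_xu_2=0$ there. (The odd reflection alone would not suffice for the $a_y$-term, since $D_yv_1\neq0$ at the wall.) This gives
\[
\bigl(a_y(A_xu_2D_yv_1)+d_y(A_yv_1A_xu_2),w_1\bigr)_x
=h_xh_y\sum_{i=1}^{N_x-1}\sum_{j=1}^{N_y-1}(A_xu_2)_{i,j}\bigl[D_yv_1\,A_yw_1-A_yv_1\,D_yw_1\bigr]_{i,j},
\]
which is pointwise antisymmetric. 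The second component is the same with $x\leftrightarrow y$.

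Hence $b_h(\bfu_h,\bfv_h,\bfw_h)+b_h(\bfu_h,\bfw_h,\bfv_h)=0$ for all $\bfu_h\in\mathsf E_h$. This is the discrete version of $\tfrac12[((\bfu\cdot\nabla)\bfv,\bfw)-((\bfu\cdot\nabla)\bfw,\bfv)]$, and the hypothesis $d_h\bfu_h=0$ is never used. The paper gives no argument beyond ``readily verified'' and the citations; this termwise summation by parts is the check it has in mind.

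By contrast, splitting $d_y(A_yv_1A_xu_2)$ with the product rule destroys this pairing. The piece $a_y(A_xu_2)\,d_y(A_yv_1)$ does not cancel against its swap as you expect, for two reasons:
\begin{itemize}
\item with odd ghost values, $d_yA_y$ is not skew-adjoint, since its first and last rows have diagonal entries $\pm1/(2h_y)$;
\item for nonconstant $A_xu_2$ there are interior remainders involving differences of $A_xu_2$.
\end{itemize}
These remainders, together with those from $a_y(A_xu_2D_yv_1)$ and the other product-rule piece, must cancel among themselves, because the total vanishes even for non-solenoidal $\bfu_h$. They cannot recombine with an $x$-contribution into $A_x(d_xu_1+d_yu_2)$ times a nonzero symmetric form, as your plan anticipates.

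So the argument as proposed would stall exactly at the bookkeeping you defer. The missing idea is to pair the two $y$-terms directly by adjointness, as you did for the $x$-terms.
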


\begin{lemma}\label{lem:mac-convection-estimates}
	In 2D, if \(d_h\mathbf u_h=0\), then
	\begin{equation*}
		\begin{aligned}
			|b_h(\bfu_h,\bfv_h,\bfw_h)|
			 & \le C\norm{\bfu_h}_h^{1/2}\norm{D_h\bfu_h}_h^{1/2}
			\norm{\bfv_h}_h^{1/2}\norm{D_h\bfv_h}_h^{1/2}
			\norm{D_h\bfw_h}_h,
			\\
			\norm{\mathcal B_h(\bfu_h,\bfv_h)}_{H_h^{-1}}
			 & \le C\norm{\bfu_h}_{\ell^\infty}\norm{\bfv_h}_h,
			\\
			\norm{\mathcal B_h(\bfu_h,\bfv_h)}_{H_h^{-1}}
			 & \le C\left(\norm{\bfv_h}_{\ell^\infty}
			+\norm{D_h\bfv_h}_{\ell^\infty}\right)\norm{\bfu_h}_h,
			\\
			\norm{\mathcal B_h(\bfu_h,\bfv_h)}_{H_h^{-1}}
			 & \le C\norm{\bfu_h}_{\ell^4}\norm{\bfv_h}_{\ell^4}.
		\end{aligned}
	\end{equation*}
	These estimates are discrete counterparts of the continuous bounds in
	\cite{HuangShen2024NSCH}. The proof is given in Appendix~\ref{app:mac-convection-estimates}.
\end{lemma}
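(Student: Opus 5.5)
The plan is to prove the four estimates of Lemma~\ref{lem:mac-convection-estimates} by reducing each to componentwise bounds on the terms in the definition of \(\mathcal B_h\), using summation by parts (Lemma~\ref{lem:mac-sbp-identities}) and discrete Sobolev inequalities on the staggered grid. Each component of \(\mathcal B_h(\bfu_h,\bfv_h)\) is a sum of a ``convective'' piece, e.g.\ \(u_{1,h}D_x(a_xv_{1,h})\), and a ``divergence'' piece, e.g.\ \(A_x(d_x(u_{1,h}v_{1,h}))\). Pairing with \(\bfw_h\) and summing by parts on the divergence pieces moves the difference onto \(\bfw_h\), yielding terms of the form \(-(u_{1,h}v_{1,h},D_x(a_x w_{1,h}))\). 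The convective pieces can either stay as they are or, using the skew-symmetry of Lemma~\ref{lem:mac-convection-form} (which requires \(d_h\bfu_h=0\)), be rewritten as \(-b_h(\bfu_h,\bfw_h,\bfv_h)\). In both cases every term of \(b_h\) becomes a trilinear sum of the type \(\sum (\text{avg of } u)(\text{avg of } v)(\text{difference of } w)\). The averaging operators \(a_x,A_x,a_y,A_y\) are bounded in every \(\ell^p\) by the triangle inequality, so only the placement of differences matters.

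For the first estimate I apply H\"older with exponents \((4,4,2)\) to these trilinear sums, obtaining \(|b_h|\le C\norm{\bfu_h}_{\ell^4}\norm{\bfv_h}_{\ell^4}\norm{D_h\bfw_h}_h\). I then invoke the discrete 2D Ladyzhenskaya inequality \(\norm{\bfz_h}_{\ell^4}\le C\norm{\bfz_h}_h^{1/2}\norm{D_h\bfz_h}_h^{1/2}\), valid for grid functions with homogeneous Dirichlet ghost values. It is proved exactly as in the continuum: apply the 1D bound \(\max_i|z_i|^2\le C\sum_i|z_i||D z_i|h\) in each direction and multiply. The same H\"older computation, with the Ladyzhenskaya step omitted, gives the fourth estimate directly, since \(\norm{\mathcal B_h}_{H_h^{-1}}=\sup_{\bfw_h} b_h(\bfu_h,\bfv_h,\bfw_h)/\norm{D_h\bfw_h}_h\). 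For the second estimate I use the same form but with H\"older \((\infty,2,2)\), putting \(\bfu_h\) in \(\ell^\infty\) and \(\bfv_h\) in \(\ell^2\).

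The third estimate requires the opposite placement: \(\bfu_h\) must carry no derivative and only an \(\ell^2\) norm. Here I do not move differences onto \(\bfw_h\) via skew-symmetry for all terms. The convective terms \(u\,D(a v)\) are already bounded by \(\norm{\bfu_h}_h\norm{D_h\bfv_h}_{\ell^\infty}\norm{\bfw_h}_h\). For the divergence terms \(A(d(uv))\), I either expand with the discrete product rule \(d(uv)=\bar u\,dv+\bar v\,du\) and then sum by parts only the \(du\) part onto \(\bfw_h\), or simply sum by parts the whole term, which gives \(\norm{\bfu_h}_h\norm{\bfv_h}_{\ell^\infty}\norm{D_h\bfw_h}_h\). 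Finally I use the discrete Poincar\'e inequality \(\norm{\bfw_h}_h\le C\norm{D_h\bfw_h}_h\), which holds because of the no-slip condition.

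I expect the main obstacle to be the bookkeeping at the boundary and for the mixed terms like \(a_y(A_xu_{2,h}D_yv_{1,h})\) and \(d_y(A_yv_{1,h}A_xu_{2,h})\). These live on different staggered locations (vertices versus cell centers), and the summation by parts must be checked with ghost values determined by no-slip, including the fact that averages of \(\bfu_h\) at boundary nodes vanish or are reflected. My first move is to verify the adjoint relations \((a_y g,\varphi)_c=(g,A_y\varphi)_v\)-type, up to boundary terms that vanish under the stated boundary conditions, and \((d_y g,\varphi)=-(g,D_y\varphi)\). The second is to confirm that the discrete Ladyzhenskaya inequality holds uniformly on each of \(\mathsf E_h^x\), \(\mathsf E_h^y\), \(\mathsf C_h\), \(\mathsf V_h\) with the constants independent of \(h\).
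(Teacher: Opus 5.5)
You follow the paper's route: H\"older on the trilinear form once every difference sits on $\bfw_h$, the discrete Ladyzhenskaya inequality for the first bound, and Poincar\'e at the end. Your third estimate is fine as written. There the convective half keeps its difference on $\bfv_h$ in $\ell^\infty$, and the conservative half is summed by parts onto $\bfw_h$.

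\textbf{The gap.} It lies in the reduction you use for the first, second and fourth estimates. Let
\[
\mathcal K_h(\bfu_h,\boldsymbol z_h)=\bigl(u_{1,h}D_x(a_xz_{1,h})+a_y(A_xu_{2,h}D_yz_{1,h}),\ a_x(A_yu_{1,h}D_xz_{2,h})+u_{2,h}D_y(a_yz_{2,h})\bigr)
\]
denote the convective pieces. The conservative pieces, paired with $\bfw_h$ and summed by parts, give exactly $-(\mathcal K_h(\bfu_h,\bfw_h),\bfv_h)_h$. Hence
\[
b_h(\bfu_h,\bfv_h,\bfw_h)=\tfrac12(\mathcal K_h(\bfu_h,\bfv_h),\bfw_h)_h-\tfrac12(\mathcal K_h(\bfu_h,\bfw_h),\bfv_h)_h .
\]
This expression is already antisymmetric in $(\bfv_h,\bfw_h)$. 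Replacing $b_h(\bfu_h,\bfv_h,\bfw_h)$ by $-b_h(\bfu_h,\bfw_h,\bfv_h)$ therefore returns the same two terms, and the first still differences $\bfv_h$.

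The reason is that the skew identity of Lemma~\ref{lem:mac-convection-form} uses only summation by parts and the no-slip values, not $d_h\bfu_h=0$. Moving the difference from $\bfv_h$ to $\bfw_h$ genuinely needs incompressibility. The same is true in the continuum, where $((\bfu\cdot\nabla)\bfv,\bfw)+((\bfu\cdot\nabla)\bfw,\bfv)=-((\nabla\cdot\bfu)\bfv,\bfw)$.

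\textbf{The missing ingredient.} You need a pointwise identity that uses $d_xu_{1,h}=-d_yu_{2,h}$ in every cell. A direct computation shows that, for $d_h\bfu_h=0$, the convective and conservative MAC forms do not coincide. They differ by a harmless remainder:
\[
\mathcal B_h(\bfu_h,\boldsymbol z_h)=\mathcal K_h(\bfu_h,\boldsymbol z_h)+\tfrac14\bigl(h_x^2D_x(d_xu_{1,h}\,d_xz_{1,h}),\ h_y^2D_y(d_yu_{2,h}\,d_yz_{2,h})\bigr).
\]
Since $h_xd_xu_{1,h}$ is a difference of two neighbouring values, $\norm{h_xd_xu_{1,h}}_{\ell^p}\le2\norm{u_{1,h}}_{\ell^p}$ for $1\le p\le\infty$, and likewise in $y$. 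So, pointwise, $\mathcal B_h(\bfu_h,\boldsymbol z_h)$ is a sum of bounded local combinations of values of $\bfu_h$ times difference quotients of $\boldsymbol z_h$.

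Now write $b_h(\bfu_h,\bfv_h,\bfw_h)=-(\mathcal B_h(\bfu_h,\bfw_h),\bfv_h)_h$ and apply the identity with $\boldsymbol z_h=\bfw_h$. Your $(4,4,2)$ and $(\infty,2,2)$ H\"older bounds and the Ladyzhenskaya step then go through unchanged.

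This is effectively the order of the paper's argument. The trilinear bounds at the start of Appendix~\ref{app:mac-convection-estimates} put the difference on the middle argument, and that is where $d_h\bfu_h=0$ and this identity enter. Skew-symmetry is used only afterwards, to move that argument into the last slot.
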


\begin{lemma}[\cite{WeiserWheeler1988,GuoWangWiseYue2016,RuiLi2017,LiShenRui2019,LiShen2020NS,LiShen2020}]\label{lem:mac-discrete-estimates}
	The following estimates hold uniformly in (h). For
	$\overline{\varphi_h}=0$ and $\mathbf u_h$ satisfying the homogeneous
	velocity boundary condition, and for any fixed $2\le p<\infty$,
	\[
		\begin{aligned}
			\norm{\varphi_h}_h & \le C\norm{D_h\varphi_h}_h,\
			\norm{\bfv_h}_h \le C\norm{D_h\bfv_h}_h,                                              \\
			\norm{\varphi_h}_{\ell^p}
			                   & \le C_p\left(\norm{D_h\varphi_h}_h+\norm{\varphi_h}_h\right),    \\
			\norm{\varphi_h}_{\ell^\infty}
			                   & \le C\left(\norm{A_h\varphi_h}_h+\norm{\varphi_h}_h\right),      \\
			\norm{\varphi_h}_{\ell^p}
			                   & \le C_p\norm{\varphi_h}_h^{2/p}\norm{D_h\varphi_h}_h^{1-2/p},    \\
			\norm{\bfv_h}_{\ell^p}
			                   & \le C_p\norm{\bfv_h}_h^{2/p}\norm{D_h\bfv_h}_h^{1-2/p},          \\
			\norm{D_h\varphi_h}_{\ell^p}
			                   & \le C_p\norm{D_h\varphi_h}_h^{2/p}\norm{A_h\varphi_h}_h^{1-2/p}.
		\end{aligned}
	\]
\end{lemma}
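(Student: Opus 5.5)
These estimates are discrete Poincaré, Sobolev, Agmon and Gagliardo--Nirenberg inequalities. The plan is to reduce each to its continuous counterpart through a piecewise-polynomial reconstruction of grid functions. Here \(h\) is quasi-uniform and the ghost values encode the no-flux and no-slip conditions. For \(\varphi_h\in\mathsf C_h\) we use the piecewise-bilinear interpolant \(I_h\varphi_h\) on the dual (vertex-centred) mesh, whose nodes are the cell centres. Near \(\partial\Omega\) we extend \(\varphi_h\) by even reflection, which is exactly the Neumann ghost-value rule. For each velocity component we use the analogous interpolant on its own staggered mesh. Near \(\partial\Omega\) this extension is odd, matching the no-slip ghost rule, so that \(I_hv\in H^1_0\).

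The first step is to establish the norm equivalences \(\norm{I_h q_h}_{L^p}\simeq\norm{q_h}_{\ell^p}\) and \(\norm{\nabla I_hq_h}_{L^2}\simeq\norm{D_hq_h}_h\), with constants independent of \(h\). This is a local computation on each cell: on a reference square, all norms of the four nodal values are equivalent, and the gradient of a bilinear function is controlled by the two one-sided differences. We also need \(|\overline{I_h\varphi_h}-\overline{\varphi_h}|\le Ch\norm{D_h\varphi_h}_h\). With these in hand, the discrete Poincaré inequalities follow from the continuous Poincaré--Wirtinger inequality applied to \(I_h\varphi_h-\overline{I_h\varphi_h}\) (mean zero) and from the Friedrichs inequality applied to \(I_h\bfv_h\in H^1_0\). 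The \(\ell^p\) embedding follows from \(H^1\hookrightarrow L^p\) in 2D. The Ladyzhenskaya/Gagliardo--Nirenberg bounds \(\norm{q}_{\ell^p}\le C\norm{q}^{2/p}\norm{Dq}^{1-2/p}\) follow from the continuous versions for mean-zero or \(H^1_0\) functions; for \(\varphi_h\) we also invoke the already-proved Poincaré inequality to absorb the lower-order term.

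For the estimates involving \(A_h\), a bilinear interpolant is not in \(H^2\), so we argue discretely instead. We expand \(\varphi_h\) in the discrete cosine basis, which diagonalises \(A_h\) with Neumann data. The eigenvalues satisfy \(\lambda_{k,l}\simeq k^2/L_x^2+l^2/L_y^2\) uniformly for \(k<N_x\), \(l<N_y\). We then bound \(\norm{\varphi_h}_{\ell^\infty}\le\sum|\hat\varphi_{kl}|\) and apply Cauchy--Schwarz with weight \((1+\lambda_{kl})\). The series \(\sum(1+\lambda_{kl})^{-2}\) converges uniformly in 2D, which gives the discrete Agmon bound \(\norm{\varphi_h}_{\ell^\infty}\le C(\norm{A_h\varphi_h}_h+\norm{\varphi_h}_h)\).

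For \(\norm{D_h\varphi_h}_{\ell^p}\), we apply the vector Gagliardo--Nirenberg inequality to the grid function \(D_h\varphi_h\) on the edge meshes. This requires controlling its discrete gradient, i.e. all mixed second differences, by \(\norm{A_h\varphi_h}_h\). That control is the discrete elliptic regularity identity \(\norm{D_xD_x\varphi}^2+2\norm{D_yD_x\varphi}^2+\norm{D_yD_y\varphi}^2=\norm{A_h\varphi_h}^2\), obtained by summation by parts (Lemma~\ref{lem:mac-sbp-identities}) on the rectangle, where Neumann reflection kills the boundary terms. The \(L^2\) piece of \(D_h\varphi_h\) carries no mean-zero property, so we bound it by interpolation, \(\norm{D_h\varphi_h}^2\le\norm{\varphi_h}\norm{A_h\varphi_h}\). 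We then check that the resulting exponents combine to \(\norm{D_h\varphi_h}^{2/p}\norm{A_h\varphi_h}^{1-2/p}\). If a lower-order term remains, we absorb it using that the normal component of \(D_h\varphi_h\) vanishes on the boundary faces, which gives a Poincaré-type control.

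The main obstacle is this last estimate, together with the uniform-in-\(h\) handling of boundary ghost cells in the reconstructions. Verifying that the reflection conventions make the summation-by-parts boundary terms vanish exactly for every staggered component is tedious. It is also the step where a lower-order term might survive and force the weaker form \(\norm{D_h\varphi_h}_h+\norm{A_h\varphi_h}_h\). If that happens, I would fall back to citing the discrete \(H^2\)-regularity results of the referenced MAC literature for that inequality.
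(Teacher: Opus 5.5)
\paragraph{Comparison with the paper.} The paper gives no argument for this lemma; it imports all seven inequalities from the cited MAC and block-centered literature. Your proposal is a correct, self-contained alternative.

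\begin{itemize}
\item \emph{$H^1$-level bounds.} Bilinear reconstruction with even (no-flux) or odd (no-slip) reflection transfers all of them from the continuous inequalities: the two Poincar\'e inequalities, the $\ell^p$ embedding, and the two Ladyzhenskaya--Gagliardo--Nirenberg bounds.
\item \emph{Agmon bound.} The cosine expansion works because $\lambda_{kl}\ge 4k^2/L_x^2+4l^2/L_y^2$ for $k<N_x$, $l<N_y$. Hence $\sum_{k,l}(1+\lambda_{kl})^{-2}$ is bounded independently of $h$.
\item \emph{Mean correction.} This step is in fact trivial: with even reflection the tensor-product interpolant satisfies $\int_\Omega I_h\varphi_h\,dx=(\varphi_h,1)_h$ exactly.
\end{itemize}

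Your route buys explicit control of the ghost-value conventions, on which the norm $\norm{D_h\bfv_h}_h$ depends through its boundary-vertex differences. Citation buys brevity, but it leaves the reader to check that each source uses the same staggered locations and boundary rules.

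\paragraph{The last inequality.} One step needs to be made precise, and once it is, your fallback is unnecessary.

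You must not reuse the fifth inequality, or the odd reconstruction, on $D_h\varphi_h$. Its tangential component $D_x\varphi_h$ does not vanish on $y=0,L_y$. Under the no-slip ghost rule, $D_y(D_x\varphi_h)$ at a boundary vertex equals $2(D_x\varphi_h)_{i,\frac12}/h_y$, which $\norm{A_h\varphi_h}_h$ does not control uniformly in $h$.

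Reconstruct $D_h\varphi_h$ instead with the even reflection inherited from $\varphi_h$. Then:
\begin{itemize}
\item the mixed differences vanish at boundary vertices;
\item the reconstruction lies in $H^1(\Omega)$ with zero normal trace;
\item its discrete gradient norm is exactly $\norm{d_xD_x\varphi_h}_c^2+2\norm{D_yD_x\varphi_h}_v^2+\norm{d_yD_y\varphi_h}_c^2=\norm{A_h\varphi_h}_h^2$. This is your identity, with $d_x,d_y$ as the outer operators.
\end{itemize}

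Now apply the inhomogeneous inequality $\norm{W}_{L^p}\le C\norm{W}_{L^2}^{2/p}\norm{W}_{H^1}^{1-2/p}$. Absorb the lower-order part using $\norm{D_h\varphi_h}_h^2=(A_h\varphi_h,\varphi_h-\overline{\varphi_h})_h\le C\norm{A_h\varphi_h}_h\norm{D_h\varphi_h}_h$, which gives $\norm{D_h\varphi_h}_h\le C\norm{A_h\varphi_h}_h$. The weaker form with $\norm{D_h\varphi_h}_h+\norm{A_h\varphi_h}_h$ therefore never arises.
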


Let $T>0$, $N\ge2$, $\tau=T/N$, and $t_n=n\tau$ for
$0\le n\le N$. For a sequence time grid function $q^n$, define
\begin{equation}\label{eq:bdf2-defs}
	\begin{aligned}
		D_{2\tau} q^{n+1}
		=\tfrac{3q^{n+1}-4q^n+q^{n-1}}{2\dt},
		\delta^2 q^{n+1}
		=q^{n+1}-2q^n+q^{n-1}. \\
		q^{*,n+1}
		=2q^n-q^{n-1},
		q^{\dagger,n+1}
		=\tfrac{3q^{n+1}-q^n}{2},
		q^{\dagger,n}
		=\tfrac{3q^n-q^{n-1}}{2},
	\end{aligned}
\end{equation}
For $(n=1)$, we define $D_{2\tau}q^1=\frac{q^1-q^0}{\tau}$. The start-up approximation is not specified explicitly and is only required to
satisfy Assumption~\ref{ass:start-up}.

Define
\begin{equation}\label{eq:g-functional}
	\calG(a,b)
	:=\frac14\left(\norm{a}_h^2+\norm{2a-b}_h^2\right),
\end{equation}
We also introduce the cubic function
\[
	\chi(a,b)
	:=\frac14(a^2+b^2)(a+b).
\]

\begin{lemma}\label{lem:bdf2-algebra}
	For any grid function \(q^n\),
	\begin{equation*}
		\dt\,(D_{2\tau}q^{n+1},q^{n+1})_h
		=\calG(q^{n+1},q^n)-\calG(q^n,q^{n-1})
		+\tfrac14\norm{\delta^2q^{n+1}}_h^2 .
	\end{equation*}
	If \(q^n\) is scalar-valued, then
	\begin{align*}
		 & \left(\chi(q^{\dagger,n+1},q^{\dagger,n})-q^{*,n+1},
		\dt D_{2\tau}q^{n+1}\right)_h                                                            \\
		 & \quad=(F(q^{\dagger,n+1})-F(q^{\dagger,n}),1)_h +\tfrac38\left(\norm{q^{n+1}-q^n}_h^2
		-\norm{q^n-q^{n-1}}_h^2\right)
		+\tfrac34\norm{\delta^2q^{n+1}}_h^2 .
	\end{align*}
\end{lemma}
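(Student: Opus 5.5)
Both identities are pointwise algebraic facts about real numbers at each grid node. Summing them with the weight \(h_xh_y\) then gives the statements in the \((\cdot,\cdot)_h\) inner product and norm. No summation by parts and none of the spatial lemmas are needed. I will fix a node and write \(a=q^{n+1}\), \(b=q^n\), \(c=q^{n-1}\) for the nodal values. Then \(2\dt D_{2\tau}q^{n+1}=3a-4b+c\) and \(\delta^2q^{n+1}=a-2b+c\).

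\emph{First identity.} This is the standard BDF2 \(G\)-stability identity. I will verify the scalar polynomial identity
\begin{equation*}
2(3a-4b+c)a=a^2+(2a-b)^2-b^2-(2b-c)^2+(a-2b+c)^2
\end{equation*}
by expanding both sides. The right-hand side is \(5a^2-4ab-b^2\) from the first three terms, minus \(4b^2-4bc+c^2\), plus \(a^2+4b^2+c^2-4ab+2ac-4bc\). This totals \(6a^2-8ab+2ac\), which equals the left-hand side. Dividing by \(4\) gives \(\dt D_{2\tau}q^{n+1}\,a=\tfrac14(a^2+(2a-b)^2)-\tfrac14(b^2+(2b-c)^2)+\tfrac14(\delta^2q^{n+1})^2\). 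Summing over nodes yields the claim with \(\calG\) from \eqref{eq:g-functional}. For vector-valued \(q\) I apply the same identity componentwise.

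\emph{Second identity.} Set \(A=q^{\dagger,n+1}\) and \(B=q^{\dagger,n}\), so that \(A-B=\dt D_{2\tau}q^{n+1}\). I split \(F(s)=\tfrac14s^4-\tfrac12s^2+\tfrac14\) into its quartic and quadratic parts.

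For the quartic part, the discrete-gradient property of \(\chi\) gives \(\chi(A,B)(A-B)=\tfrac14(A^2+B^2)(A+B)(A-B)=\tfrac14(A^4-B^4)\).

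For the quadratic part, I write
\begin{equation*}
-q^{*,n+1}(A-B)=-\tfrac12(A^2-B^2)+\bigl(\tfrac{A+B}{2}-q^{*,n+1}\bigr)(A-B).
\end{equation*}
The first two pieces together give \(F(A)-F(B)\), since the constant \(\tfrac14\) cancels. The main step is to identify the remainder. A direct computation gives \(\tfrac{A+B}{2}=\tfrac{3a+2b-c}{4}\). Subtracting \(q^{*,n+1}=2b-c\) leaves \(\tfrac34(a-2b+c)\).

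To finish, I introduce \(x=a-b\) and \(y=b-c\), so that \(\delta^2q^{n+1}=x-y\) and \(A-B=\tfrac{3x-y}{2}\). The remainder then becomes
\begin{equation*}
\tfrac38(x-y)(3x-y)=\tfrac38(3x^2-4xy+y^2)=\tfrac38(x^2-y^2)+\tfrac34(x-y)^2 .
\end{equation*}
Summing over nodes gives exactly the telescoping term \(\tfrac38(\norm{q^{n+1}-q^n}_h^2-\norm{q^n-q^{n-1}}_h^2)\) and the nonnegative term \(\tfrac34\norm{\delta^2q^{n+1}}_h^2\).

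The only step needing care is recognising \(\tfrac{A+B}{2}-q^{*,n+1}\) as a multiple of \(\delta^2q^{n+1}\) and regrouping the product via \(x,y\). The rest is direct expansion.
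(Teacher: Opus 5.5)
Your proof is correct and follows the paper's argument. The first identity is the BDF2 $G$-stability identity, which the paper simply cites and you verify by direct expansion. The second rests on the same two facts, $q^{\dagger,n+1}-q^{\dagger,n}=\dt D_{2\tau}q^{n+1}$ and $\tfrac12(q^{\dagger,n+1}+q^{\dagger,n})-q^{*,n+1}=\tfrac34\delta^2q^{n+1}$, followed by the same regrouping into a telescoping term plus $\tfrac34(\delta^2q^{n+1})^2$.

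There is one small arithmetic slip in your expansion: $a^2+(2a-b)^2-b^2=5a^2-4ab$, not $5a^2-4ab-b^2$. With the extra $-b^2$ your pieces would not sum to $6a^2-8ab+2ac$. The identity itself and your stated total are nonetheless right.
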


\begin{proof}
	The first identity in Lemma~\ref{lem:bdf2-algebra} is the BDF2
	\(G\)-stability identity \cite{Dahlquist1978}. For the second identity,
	\eqref{eq:bdf2-defs} implies
	\[
		q^{\dagger,n+1}-q^{\dagger,n}=\dt D_{2\tau}q^{n+1},
		\quad
		\tfrac{q^{\dagger,n+1}+q^{\dagger,n}}2-q^{*,n+1}
		=\tfrac34\delta^2q^{n+1}.
	\]
	Hence,
	\begin{align*}
		 & \left(\chi(q^{\dagger,n+1},q^{\dagger,n})-q^{*,n+1}\right)
		\dt D_{2\tau}q^{n+1}                                          \\
		 & \quad=F(q^{\dagger,n+1})-F(q^{\dagger,n})
		+\left(\tfrac{q^{\dagger,n+1}+q^{\dagger,n}}2-q^{*,n+1}\right)
		\dt D_{2\tau}q^{n+1}                                          \\
		 & \quad=F(q^{\dagger,n+1})-F(q^{\dagger,n})
		+\tfrac38\,\delta^2q^{n+1}
		(3q^{n+1}-4q^n+q^{n-1})                                       \\
		 & \quad=F(q^{\dagger,n+1})-F(q^{\dagger,n})
		+\tfrac38\big((q^{n+1}-q^n)^2-(q^n-q^{n-1})^2\big)
		+\tfrac34(\delta^2q^{n+1})^2 .
	\end{align*}
	Summing over the grid yields the claim.
\end{proof}

\begin{lemma}\label{lem:chi-estimates}
	For \(a,b,c\in\mathbb R\),
	\[
		\chi(a,b)=\chi(b,a),\qquad
		(\chi(a,c)-\chi(b,c))(a-b)\ge0,
	\]
	and
	\[
		\chi(a,b)-\chi(b,c)
		=\frac14(a-c)\bigl(a^2+ac+c^2+b(a+c)+b^2\bigr).
	\]
	For cell-centered grid functions \(u_h,v_h,w_h,z_h,s_h\),
	\[
		\begin{aligned}
			\left|(\chi(u_h,v_h)-\chi(v_h,w_h),z_h)_h\right|
			 & \le
			\frac14\norm{u_h-w_h}_h\norm{z_h}_{\ell^6}
			\left(
			\norm{u_h}_{\ell^6}
			+\norm{v_h}_{\ell^6}
			+\norm{w_h}_{\ell^6}
			\right)^2, \\
			\norm{\chi(u_h,v_h)-s_h}_h
			 & \le
			\frac14
			\left(
			\norm{u_h}_{\ell^6}
			+\norm{v_h}_{\ell^6}
			\right)^3
			+\norm{s_h}_h .
		\end{aligned}
	\]
	Let \(r_h\) be cell-centered and set
	\[
		\begin{aligned}
			K_\infty
			 & =\norm{u_h}_{\ell^\infty}
			+\norm{v_h}_{\ell^\infty}
			+\norm{w_h}_{\ell^\infty}
			+\norm{r_h}_{\ell^\infty},   \\
			K_4
			 & =\norm{D_hu_h}_{\ell^4}
			+\norm{D_hv_h}_{\ell^4}
			+\norm{D_hw_h}_{\ell^4}
			+\norm{D_hr_h}_{\ell^4}.
		\end{aligned}
	\]
	If \(u_h-w_h\) and \(v_h-r_h\) have zero mean, then
	\[
		\norm{D_h\bigl(\chi(u_h,v_h)-\chi(w_h,r_h)\bigr)}_h
		\le
		\left(
		\frac14K_\infty^2+\frac12C_4K_\infty K_4
		\right)
		\left(
		\norm{D_h(u_h-w_h)}_h
		+\norm{D_h(v_h-r_h)}_h
		\right),
	\]
	where \(C_4\) is the constant in the \(p=4\) estimate of
	Lemma~\ref{lem:mac-discrete-estimates}.
\end{lemma}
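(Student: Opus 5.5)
The proof splits into three parts: the pointwise algebra, the two $\ell^2$-type bounds, and the gradient (Lipschitz-type) estimate. All pointwise statements are proved in $\mathbb R$ and then summed over cells.

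\emph{Pointwise algebra.} Symmetry of $\chi(a,b)=\tfrac14(a^2+b^2)(a+b)$ is immediate. For the difference identity, expand
\[
4\chi(a,b)=a^3+a^2b+ab^2+b^3, \qquad 4\chi(b,c)=b^3+b^2c+bc^2+c^3 .
\]
Their difference is
\[
a^3-c^3+b(a^2-c^2)+b^2(a-c)=(a-c)\bigl(a^2+ac+c^2+b(a+c)+b^2\bigr).
\]
Monotonicity follows from the symmetric version of this identity: $\chi(a,c)-\chi(b,c)=\tfrac14(a-b)\bigl(a^2+ab+b^2+c(a+b)+c^2\bigr)$. The bracket equals $\tfrac12\bigl[(a+b)^2+(a+c)^2+(b+c)^2\bigr]\ge0$, since the right side expands to $a^2+b^2+c^2+ab+ac+bc$. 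Multiplying by $(a-b)$ then gives $(\chi(a,c)-\chi(b,c))(a-b)\ge0$.

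\emph{$\ell^2$-type bounds.} For the first inequality, use the difference identity pointwise together with $|a^2+ac+c^2+b(a+c)+b^2|\le(|a|+|b|+|c|)^2$. This follows by comparing coefficients: the left side has coefficient $1$ on each square and on each cross term, the right side has $1$ and $2$. Applying H\"older with exponents $(2,6,3)$ gives
\[
|(\chi(u,v)-\chi(v,w),z)_h|\le\tfrac14\norm{u-w}_h\norm{z}_{\ell^6}\norm{(|u|+|v|+|w|)^2}_{\ell^3}.
\]
Since $\norm{g^2}_{\ell^3}=\norm{g}_{\ell^6}^2$, Minkowski in $\ell^6$ finishes it. For the second bound, $|\chi(u,v)|\le\tfrac14(|u|+|v|)^3$ pointwise, and $\norm{g^3}_h=\norm{g}_{\ell^6}^3$, so the triangle inequality with Minkowski gives the claim.

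\emph{Gradient estimate.} This is the main obstacle: we need a discrete product rule that handles the averaging in $D_x$ across neighbouring cells. Write $\chi(u,v)-\chi(w,r)=[\chi(u,v)-\chi(w,v)]+[\chi(w,v)-\chi(w,r)]$. Each bracket has the form $\tfrac14(u-w)P$ with $P=u^2+uw+w^2+v(u+w)+v^2$, or the analogous form in $(v-r)$. For a difference quotient of a product at a face, use
\[
D_x(gP)=(D_xg)\,\tilde P+\tilde g\,(D_xP),
\]
where the tilde denotes evaluation at one of the two neighbouring cells. The first term is bounded by $\tfrac14\cdot$(a bound on $|P|$) times $\norm{D_h(u-w)}_h$. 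The crude bound $|P|\le K_\infty^2$ follows from the coefficient sum, but that constant is not directly $1$. I would instead bound it via $P\le\tfrac32(u^2+v^2+w^2)$ style estimates, or accept the constant in $K_\infty^2$ by noting $K_\infty$ includes all four norms, so that $(u^2+uw+w^2+\dots)\le K_\infty^2$.

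The second term has $D_xP$ bounded by a sum of $|\cdot|_{\infty}\cdot|D_x(\cdot)|$ products, at most $2K_\infty(|D_xu|+|D_xv|+|D_xw|)$ pointwise. By H\"older with exponents $(4,4)$, its contribution is at most $\tfrac14\cdot2K_\infty K_4\norm{u-w}_{\ell^4}$. The zero-mean hypothesis on $u-w$ now allows the $p=4$ estimate of Lemma~\ref{lem:mac-discrete-estimates} together with Poincar\'e, $\norm{u-w}_{\ell^4}\le C_4\norm{D_h(u-w)}_h$, which yields the $\tfrac12C_4K_\infty K_4$ factor. The $(v-r)$ bracket is treated identically. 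Summing over $x$- and $y$-faces gives the stated bound.

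Constant bookkeeping, and the choice of neighbour in the product rule so that the sup-norms are valid, are where care is needed. The structural argument, however, is just this product rule.
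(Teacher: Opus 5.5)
Your proposal is correct and matches the paper's proof essentially step for step. It uses the same splitting $\chi(u_h,v_h)-\chi(w_h,r_h)=\tfrac14(u_h-w_h)\mathcal P_h+\tfrac14(v_h-r_h)\mathcal Q_h$, the same bounds $\norm{\mathcal P_h}_{\ell^\infty}\le K_\infty^2$ and $\norm{D_h\mathcal P_h}_{\ell^4}\le 2K_\infty K_4$, the discrete product rule with H\"older, and the zero-mean $p=4$ estimate. The only variation is harmless: you get monotonicity from the sum-of-squares form of the factorization, where the paper uses $\partial_1\chi(a,c)=\tfrac14\bigl(2a^2+(a+c)^2\bigr)\ge0$.

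Your hedging about the constant in $|P|$ is unnecessary. Your own coefficient comparison already gives $|P|\le(|u|+|v|+|w|)^2\le K_\infty^2$ with constant one. The alternative bound $P\le\tfrac32(u^2+v^2+w^2)$ that you float is false: at $u=v=w=1$ one has $P=6>4.5$.
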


\begin{proof}
	The symmetry is immediate. For fixed \(c\),
	\[
		\partial_1\chi(a,c)
		=\frac14(3a^2+2ac+c^2)
		=\frac14\bigl(2a^2+(a+c)^2\bigr)\ge0,
	\]
	which gives the monotonicity. The factorization follows by expanding the two
	cubic polynomials.

	Set
	\[
		\mathcal P_h
		=u_h^2+u_hw_h+w_h^2+v_h(u_h+w_h)+v_h^2.
	\]
	Then
	\[
		\chi(u_h,v_h)-\chi(v_h,w_h)
		=\frac14(u_h-w_h)\mathcal P_h,
	\]
	and
	\[
		|\mathcal P_h|
		\le
		\bigl(|u_h|+|v_h|+|w_h|\bigr)^2.
	\]
	H\"older's inequality gives
	\[
		\begin{aligned}
			\left|(\chi(u_h,v_h)-\chi(v_h,w_h),z_h)_h\right|
			 & \le
			\frac14\norm{u_h-w_h}_h\norm{\mathcal P_h}_{\ell^3}
			\norm{z_h}_{\ell^6} \\
			 & \le
			\frac14\norm{u_h-w_h}_h\norm{z_h}_{\ell^6}
			\left(
			\norm{u_h}_{\ell^6}
			+\norm{v_h}_{\ell^6}
			+\norm{w_h}_{\ell^6}
			\right)^2 .
		\end{aligned}
	\]
	Also,
	\[
		|\chi(u_h,v_h)|
		\le
		\frac14\bigl(|u_h|+|v_h|\bigr)^3,
	\]
	and hence
	\[
		\norm{\chi(u_h,v_h)-s_h}_h
		\le
		\frac14
		\left(
		\norm{u_h}_{\ell^6}
		+\norm{v_h}_{\ell^6}
		\right)^3
		+\norm{s_h}_h .
	\]

	For the gradient estimate, write
	\[
		\begin{aligned}
			\chi(u_h,v_h)-\chi(w_h,r_h)
			 & =\frac14(u_h-w_h)\mathcal P_h+\frac14(v_h-r_h)\mathcal Q_h,
		\end{aligned}
	\]
	where
	\[
		\begin{aligned}
			\mathcal P_h
			 & =u_h^2+u_hw_h+w_h^2+v_h(u_h+w_h)+v_h^2,  \\
			\mathcal Q_h
			 & =w_h^2+w_h(v_h+r_h)+v_h^2+v_hr_h+r_h^2 .
		\end{aligned}
	\]
	By the definitions of \(K_\infty\) and \(K_4\),
	\[
		\norm{\mathcal P_h}_{\ell^\infty},
		\norm{\mathcal Q_h}_{\ell^\infty}
		\le K_\infty^2,
	\]
	and
	\[
		\norm{D_h\mathcal P_h}_{\ell^4},
		\norm{D_h\mathcal Q_h}_{\ell^4}
		\le 2K_\infty K_4.
	\]
	The discrete product rule and H\"older's inequality yield
	\[
		\begin{aligned}
			\norm{D_h\bigl(\chi(u_h,v_h)-\chi(w_h,r_h)\bigr)}_h
			 & \le
			\frac14K_\infty^2
			\left(
			\norm{D_h(u_h-w_h)}_h
			+\norm{D_h(v_h-r_h)}_h
			\right)  \\
			 & \quad
			+\frac12K_\infty K_4
			\left(
			\norm{u_h-w_h}_{\ell^4}
			+\norm{v_h-r_h}_{\ell^4}
			\right).
		\end{aligned}
	\]
	Since \(u_h-w_h\) and \(v_h-r_h\) have zero mean,
	Lemma~\ref{lem:mac-discrete-estimates} with \(p=4\) gives
	\[
		\norm{u_h-w_h}_{\ell^4}
		+\norm{v_h-r_h}_{\ell^4}
		\le
		C_4
		\left(
		\norm{D_h(u_h-w_h)}_h
		+\norm{D_h(v_h-r_h)}_h
		\right).
	\]
	The desired estimate follows.
\end{proof}

\section{The CS-BDF2 scheme}

Given the two previous time levels
\[
	(\phi_h^j,\bfu_h^j,p_h^j),\quad j=n,n-1,
\]
find \((\phi_h^{n+1},\mu_h^{n+1},\bfu_h^{n+1},p_h^{n+1})\) such that
\begin{subequations}\label{eq:scheme}
	\begin{align}
		D_{2\tau}\bfu_h^{n+1}
		+\mathcal B_h(\bfu_h^{*,n+1},\bfu_h^{n+1})
		+\nu A_h\bfu_h^{n+1}+D_hp_h^{n+1}
		+\phi_h^{*,n+1}D_h\mu_h^{n+1}    & =0, \label{eq:scheme-u}   \\
		d_h\bfu_h^{n+1}                  & =0, \label{eq:scheme-div} \\
		D_{2\tau}\phi_h^{n+1}+A_h\mu_h^{n+1}
		+d_h(\phi_h^{*,n+1}\bfu_h^{n+1}) & =0, \label{eq:scheme-phi} \\
		\mu_h^{n+1}-\eps^2A_h\phi_h^{n+1}
		-\chi(\phi_h^{\dagger,n+1},\phi_h^{\dagger,n})
		+\phi_h^{*,n+1}                  & =0. \label{eq:scheme-mu}
	\end{align}
\end{subequations}
The initial data at \(t=0\) are obtained by the mass-corrected restriction,
the MAC velocity projection, and the restricted exact chemical potential.
The values at \(t=\tau\) are supplied by a second-order, mass-compatible
start-up procedure satisfying Assumption~\ref{ass:start-up}.

\begin{lemma}[Brouwer fixed point theorem \cite{Brower1, Brower2}]
	\label{lem:brouwer}
	Let \(V\) be a finite-dimensional real Hilbert space, and let
	\(\mathcal R:V\to V'\) be continuous. Identify \(V'\) with \(V\)
	through the Riesz map. Suppose that, for some \(R>0\),
	\[
		\langle \mathcal R(z),z\rangle \ge 0
		\quad
		\forall z\in V,\quad \norm{z}_V=R .
	\]
	Then there exists \(z_R\in V\), \(\norm{z_R}_V\le R\), such that $\mathcal R(z_R)=0$.
\end{lemma}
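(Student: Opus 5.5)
The plan is to deduce the statement from the classical Brouwer fixed point theorem for closed balls in \(\mathbb R^m\), using the standard contradiction argument (often called the acute-angle lemma). Since \(V\) is a finite-dimensional real Hilbert space, choosing an orthonormal basis gives an isometric isomorphism \(V\cong\mathbb R^m\). This map carries the closed ball \(B_R=\{z\in V:\norm{z}_V\le R\}\) onto a Euclidean closed ball. Hence every continuous self-map of \(B_R\) has a fixed point. Throughout, \(\mathcal R(z)\) is regarded as an element of \(V\) via the Riesz map, so \(\langle\mathcal R(z),w\rangle\) is the inner product in \(V\).

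Suppose, for contradiction, that \(\mathcal R(z)\neq0\) for every \(z\in B_R\). Then the map
\[
	\mathcal S(z):=-R\,\frac{\mathcal R(z)}{\norm{\mathcal R(z)}_V},\qquad z\in B_R,
\]
is well defined. It is continuous, being a quotient of continuous maps with a nonvanishing denominator. It also satisfies \(\norm{\mathcal S(z)}_V=R\), so \(\mathcal S\) maps \(B_R\) into \(B_R\). Brouwer's theorem then yields a point \(z_0\in B_R\) with \(\mathcal S(z_0)=z_0\). In particular \(\norm{z_0}_V=\norm{\mathcal S(z_0)}_V=R\), so \(z_0\) lies on the sphere where the hypothesis applies.

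At this fixed point we compute
\[
	\langle\mathcal R(z_0),z_0\rangle
	=\Bigl\langle\mathcal R(z_0),-R\,\frac{\mathcal R(z_0)}{\norm{\mathcal R(z_0)}_V}\Bigr\rangle
	=-R\,\norm{\mathcal R(z_0)}_V<0,
\]
which contradicts \(\langle\mathcal R(z),z\rangle\ge0\) on \(\norm{z}_V=R\). Hence some \(z_R\in B_R\) satisfies \(\mathcal R(z_R)=0\).

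There is no real obstacle. The only points needing care are the reduction of \(B_R\) to a Euclidean ball, and the observation that the fixed point is forced onto the sphere, which is exactly where the sign hypothesis is available.
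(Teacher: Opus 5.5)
Your argument is correct. It is the standard derivation of this consequence of Brouwer's theorem: if $\mathcal R$ had no zero in the ball, the map $z\mapsto -R\,\mathcal R(z)/\norm{\mathcal R(z)}_V$ would have a fixed point on the sphere, and there the sign condition fails. The paper gives no proof of its own and only cites the literature, where this same argument is the usual one.
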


We first prove mass conservation and solvability of the fully discrete scheme.

\begin{theorem}\label{thm:mass-solvability}
	Assume $(\phi_h^1,1)_h=(\phi_h^0,1)_h$ and $d_h\bfu_h^0=d_h\bfu_h^1=0$.
	Then, for each \(1\le n\le N-1\), the CS-BDF2 scheme \eqref{eq:scheme}, with $(p_h^{n+1},1)_h=0$
	admits a unique solution
	\((\bfu_h^{n+1},p_h^{n+1},\phi_h^{n+1},\mu_h^{n+1})\).
	Moreover, it preserves the mass:
	\begin{equation}\label{eq:mass-conservation}
		(\phi_h^{n+1},1)_h
		=
		(\phi_h^n,1)_h
		=
		(\phi_h^0,1)_h .
	\end{equation}
\end{theorem}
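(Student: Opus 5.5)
Mass conservation comes first, by induction on \(n\). Take the \((\cdot,1)_h\) inner product of \eqref{eq:scheme-phi}. By Lemma~\ref{lem:mac-sbp-identities}, \((A_h\mu_h^{n+1},1)_h=(D_h\mu_h^{n+1},D_h1)_h=0\) and \((d_h(\phi_h^{*,n+1}\bfu_h^{n+1}),1)_h=-(\phi_h^{*,n+1}\bfu_h^{n+1},D_h1)_h=0\). Hence \((3\phi_h^{n+1}-4\phi_h^n+\phi_h^{n-1},1)_h=0\). If the masses at levels \(n\) and \(n-1\) both equal \((\phi_h^0,1)_h\), then \((\phi_h^{n+1},1)_h=(\phi_h^0,1)_h\). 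The base case is the hypothesis \((\phi_h^1,1)_h=(\phi_h^0,1)_h\). This is only needed for the solutions, but it also fixes the affine constraint in the solvability argument: the unknown \(\phi_h^{n+1}\) lies in \(\phi_h^n+\mathsf C_{h,0}\).

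For solvability we reduce to a finite-dimensional variational problem. The velocity lies in the discretely divergence-free subspace \(\mathsf V_h^{\rm div}=\{\bfv_h\in\mathsf E_h:d_h\bfv_h=0\}\), where the pressure drops out. Write \(\phi_h^{n+1}=\phi_h^n+\theta\) with \(\theta\in\mathsf C_{h,0}\). Let \(\mathring\mu\) be the mean-free part of \(\mu\); \eqref{eq:scheme-mu} then determines the mean of \(\mu\). Set \(z=(\bfu,\theta,\mathring\mu)\in V=\mathsf V_h^{\rm div}\times\mathsf C_{h,0}\times\mathsf C_{h,0}\). Define \(\mathcal R(z)\) by the residuals of \eqref{eq:scheme-u} tested against \(\mathsf V_h^{\rm div}\), of \eqref{eq:scheme-phi} with the sign arranged so that pairing with \(\mathring\mu\) produces the cancellation, and of the mean-free projection of \eqref{eq:scheme-mu}. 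Pairing \(\mathcal R(z)\) with a suitably weighted test vector mirrors the energy estimate. We test the momentum residual by \(\bfu\), the phase residual by \(\mathring\mu\), and the \(\mu\)-residual by \(\frac{3}{2}\theta\) up to scaling; equivalently, we work with the combination that appears in Lemma~\ref{lem:bdf2-algebra}.

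Three structural facts drive the coercivity. First, \(b_h(\bfu^{*},\bfu,\bfu)=0\) by Lemma~\ref{lem:mac-convection-form}, since \(d_h\bfu_h^{*,n+1}=0\). Second, the transport and capillary terms cancel exactly by the third identity of Lemma~\ref{lem:mac-sbp-identities}. Third, the implicit cubic \(\chi(\tfrac{3\phi^n+3\theta-\phi^n}{2},\phi^{\dagger,n})\) is monotone in \(\theta\) by Lemma~\ref{lem:chi-estimates}. The weighted pairing then gives a lower bound
\[
\tfrac{3}{2\dt}\norm{\bfu}_h^2+\nu\norm{D_h\bfu}_h^2+\norm{D_h\mathring\mu}_h^2+c\,\eps^2\norm{D_h\theta}_h^2+c\norm{\theta}_h^2-C_{\rm data}\norm{z}_V.
\]
This is \(\ge0\) on a large sphere. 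The \(\theta\) pairing requires care: the skew coupling \(\pm(\mathring\mu,\theta)\) cancels between the phase and \(\mu\) equations, and \(\norm{\theta}_h\) is controlled via \(\norm{D_h\theta}_h\) and the Poincar\'e inequality of Lemma~\ref{lem:mac-discrete-estimates}. Lemma~\ref{lem:brouwer} then yields existence. The pressure is recovered from the range condition via Lemma~\ref{lem:mac-infsup}: the residual of \eqref{eq:scheme-u} annihilates \(\mathsf V_h^{\rm div}\), so it is a discrete gradient \(-D_hp\), and \(p\) is unique in \(\mathsf C_{h,0}\).

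Uniqueness is expected to be the main obstacle. Take the difference of two solutions and test the same way. The velocity and capillary/transport terms cancel, since both solutions share the explicit \(\phi^{*,n+1}\) and \(\bfu^{*,n+1}\), so the difference system is linear except in \(\chi\). For \(\chi\), monotonicity in its first argument with the second argument fixed gives \((\chi(a_1,c)-\chi(a_2,c),a_1-a_2)_h\ge0\). All explicit terms drop out of the difference. We obtain
\[
\tfrac{3}{2\dt}\norm{\delta\bfu}_h^2+\nu\norm{D_h\delta\bfu}_h^2+\norm{D_h\delta\mu}_h^2+\tfrac{3\eps^2}{2}\norm{D_h\delta\theta}_h^2\le0.
\]
Hence \(\delta\bfu=0\), \(\delta\theta=0\) (it is mean-free), and \(\delta\mu\) is constant, so \(\delta\mu=0\) by \eqref{eq:scheme-mu}; finally \(\delta p=0\) by inf-sup and the zero-mean normalization. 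The delicate point, for both existence and uniqueness, is choosing test functions with consistent scalings (the factor \(3/(2\dt)\) against \(3/2\)) so that the skew couplings cancel exactly.
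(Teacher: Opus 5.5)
Your mass-conservation and uniqueness arguments are the paper's: test \eqref{eq:scheme-phi} by \(1\), and for uniqueness test by \(\widehat\bfu\), \(\widehat\mu\), \(\tfrac{3}{2\dt}\widehat\phi\) and use monotonicity of \(\chi\) in its first argument. Your existence step is also correct, but it reaches coercivity by a different route.

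The paper takes \(\sigma_h=D_{2\tau}\phi_h\in\mathsf C_{h,0}\) as the phase unknown. With that choice the unweighted pairing \(\langle\mathcal R(z),z\rangle\) already cancels \((\sigma_h,\widetilde\mu_h)_h\) against \(-(\widetilde\mu_h,\sigma_h)_h\). It then multiplies by \(\dt\) and applies Lemma~\ref{lem:bdf2-algebra} to \(\bfu_h\), \(D_h\phi_h\) and the bulk term. The lower bound is therefore the discrete energy \(\calG(\bfu_h,\bfu_h^n)+\eps^2\calG(D_h\phi_h,D_h\phi_h^n)+(F(\phi_h^\dagger),1)_h\), plus dissipation, minus data. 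Growth is read off from \(\calG\) and the quartic growth of \(F\).

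You instead use \(\theta=\phi_h^{n+1}-\phi_h^n\) and the monotonicity in Lemma~\ref{lem:chi-estimates}:
\((\chi(\phi_h^n+\tfrac32\theta,\phi_h^{\dagger,n}),\theta)_h\ge(\chi(\phi_h^n,\phi_h^{\dagger,n}),\theta)_h\).
So the implicit cubic costs only a term linear in \(\theta\). The quadratic growth comes from \(\tfrac{3}{2\dt}\norm{\bfu}_h^2\), \(\norm{D_h\mathring\mu}_h^2\), and the \(\eps^2\)-Dirichlet term in \(\theta\), combined with Poincar\'e.

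Your version is more elementary. It needs neither the bulk-energy identity nor any growth property of \(F\), so your remark about ``equivalently'' using Lemma~\ref{lem:bdf2-algebra} can be dropped. It reuses exactly the monotonicity that uniqueness needs, and it gives an explicit radius. The paper's version makes solvability a direct by-product of the energy law in Theorem~\ref{thm:energy-stability}, at the price of a less explicit growth argument.

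Some bookkeeping to tidy up:
\begin{itemize}
\item With \(\theta\) as the unknown, build the weight \(\tfrac{3}{2\dt}\) into the third component of \(\mathcal R\). This is a positive rescaling and leaves the zero set unchanged, and it lets Lemma~\ref{lem:brouwer} apply to the unweighted \(\langle\mathcal R(z),z\rangle\).
\item State explicitly that the phase residual has zero mean automatically, because the masses at levels \(n\) and \(n-1\) agree and \(\theta\in\mathsf C_{h,0}\). Hence solving it against \(\mathsf C_{h,0}\) suffices.
\item In your uniqueness display, the gradient coefficient is \(\tfrac{3\eps^2}{2\dt}\), not \(\tfrac{3\eps^2}{2}\), when the velocity term carries \(\tfrac{3}{2\dt}\). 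This is harmless, since every term is nonnegative.
\end{itemize}
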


\begin{proof}
	Testing \eqref{eq:scheme-phi} by \(1\) and utilizing $(A_h\mu_h^{n+1},1)_h=0,
		\ (d_h(\phi_h^{*,n+1}\bfu_h^{n+1}),1)_h=0$ gives
	\[
		3(\phi_h^{n+1},1)_h
		-4(\phi_h^n,1)_h
		+(\phi_h^{n-1},1)_h=0,
	\]
	since $(\phi_h^1,1)_h=(\phi_h^0,1)_h$, we obtain \eqref{eq:mass-conservation} by induction.

	We now prove existence at a fixed time step \(n\), assuming that the two
	previous levels have already been constructed. Set $m_0=(\phi_h^0,1)_h$
	and introduce
	\[
		\mathsf U_h
		=
		\{\bfv_h\in\mathsf E_h:\ d_h\bfv_h=0\}.
	\]
	We work in the finite-dimensional Hilbert space $\mathbb V_h = \mathsf U_h\times\mathsf C_{h,0}\times\mathsf C_{h,0}.$
	We use the norm
	\[
		\norm{(\bfv_h,\theta_h,\zeta_h)}_{\mathbb V_h}^2
		=\norm{\bfv_h}_h^2+\norm{\theta_h}_h^2+\norm{\zeta_h}_h^2 .
	\]
	For $(\bfu_h,\widetilde\mu_h,\sigma_h)\in\mathbb V_h$,
	define
	\[
		\phi_h
		=
		\tfrac{2\dt}{3}\sigma_h
		+
		\tfrac{4\phi_h^n-\phi_h^{n-1}}{3}.
	\]
	Then $D_{2\tau}\phi_h=\sigma_h$.
	Moreover, since the previous two levels have mass \(m_0\) and
	\((\sigma_h,1)_h=0\),
	\[
		(\phi_h,1)_h=m_0 .
	\]
	Define $\phi_h^\dagger = \tfrac{3\phi_h-\phi_h^n}{2}$.

	For \(z_h=(\bfu_h,\widetilde\mu_h,\sigma_h)\in\mathbb V_h\) and
	for all $(\bfv_h,\psi_h,\eta_h)\in\mathbb V_h$, define
	\(\mathcal R(z_h)\in\mathbb V_h'\) by
	\[
		\begin{aligned}
			 & \left\langle
			\mathcal R(\bfu_h,\widetilde\mu_h,\sigma_h),
			(\bfv_h,\psi_h,\eta_h)
			\right\rangle                                \\
			 & \quad =
			(D_{2\tau}\bfu_h,\bfv_h)_h
			+\nu(D_h\bfu_h,D_h\bfv_h)_h
			+b_h(\bfu_h^{*,n+1},\bfu_h,\bfv_h)           \\
			 & \qquad
			+(\phi_h^{*,n+1}D_h\widetilde\mu_h,\bfv_h)_h \\
			 & \qquad
			+(\sigma_h,\psi_h)_h
			+(D_h\widetilde\mu_h,D_h\psi_h)_h
			+(d_h(\phi_h^{*,n+1}\bfu_h),\psi_h)_h        \\
			 & \qquad
			-(\widetilde\mu_h,\eta_h)_h
			+\eps^2(D_h\phi_h,D_h\eta_h)_h               \\
			 & \qquad
			+
			\bigl(
			\chi(\phi_h^\dagger,\phi_h^{\dagger,n})
			-\phi_h^{*,n+1},
			\eta_h
			\bigr)_h ,
		\end{aligned}
	\]
	Here $D_{2\tau}\bfu_h = \tfrac{3\bfu_h-4\bfu_h^n+\bfu_h^{n-1}}{2\dt}$. The map \(\mathcal R:\mathbb V_h\to\mathbb V_h'\) is continuous.

	Taking $(\bfv_h,\psi_h,\eta_h) = (\bfu_h,\widetilde\mu_h,\sigma_h)$, using Lemma~\ref{lem:mac-convection-form} and the summation-by-parts cancellation
	\[
		(\phi_h^{*,n+1}D_h\widetilde\mu_h,\bfu_h)_h
		+
		(d_h(\phi_h^{*,n+1}\bfu_h),\widetilde\mu_h)_h=0,
	\]
	we obtain
	\[
		\begin{aligned}
			 & \left\langle
			\mathcal R(\bfu_h,\widetilde\mu_h,\sigma_h),
			(\bfu_h,\widetilde\mu_h,\sigma_h)
			\right\rangle                  \\
			 & \quad =
			(D_{2\tau}\bfu_h,\bfu_h)_h
			+\nu\norm{D_h\bfu_h}_h^2
			+\norm{D_h\widetilde\mu_h}_h^2 \\
			 & \qquad
			+\eps^2(D_h\phi_h,D_h\sigma_h)_h
			+
			\bigl(
			\chi(\phi_h^\dagger,\phi_h^{\dagger,n})
			-\phi_h^{*,n+1},
			\sigma_h
			\bigr)_h .
		\end{aligned}
	\]
	Multiplying by \(\dt\) and applying Lemma~\ref{lem:bdf2-algebra} to
	\(\bfu_h\), \(D_h\phi_h\), and \(\phi_h\), we get
	\[
		\begin{aligned}
			 & \dt
			\left\langle
			\mathcal R(\bfu_h,\widetilde\mu_h,\sigma_h),
			(\bfu_h,\widetilde\mu_h,\sigma_h)
			\right\rangle           \\
			 & \quad \ge
			\calG(\bfu_h,\bfu_h^n)
			+
			\eps^2\calG(D_h\phi_h,D_h\phi_h^n)
			+
			(F(\phi_h^\dagger),1)_h \\
			 & \qquad
			+
			\nu\dt\norm{D_h\bfu_h}_h^2
			+
			\dt\norm{D_h\widetilde\mu_h}_h^2
			-C_n .
		\end{aligned}
	\]
	Here \(C_n\) depends only on the known levels \(n\) and \(n-1\).

	Since \(F\) has quartic growth and the mass of \(\phi_h^\dagger\) is fixed,
	the right-hand side tends to \(+\infty\) as
	\[
		\norm{\bfu_h}_h
		+
		\norm{\widetilde\mu_h}_h
		+
		\norm{\sigma_h}_h
		\to\infty .
	\]
	Indeed, \(\widetilde\mu_h\) and \(\sigma_h\) have zero mean, so the discrete
	Poincar\'e inequality controls them by their discrete gradients, and
	\(\phi_h^\dagger\) is an affine function of \(\sigma_h\). Therefore there
	exists \(R>0\) such that
	\[
		\left\langle
		\mathcal R(\bfu_h,\widetilde\mu_h,\sigma_h),
		(\bfu_h,\widetilde\mu_h,\sigma_h)
		\right\rangle
		\ge0
	\]
	whenever $\norm{\bfu_h}_h^2 + \norm{\widetilde\mu_h}_h^2 + \norm{\sigma_h}_h^2 =R^2$.
	By Lemma~\ref{lem:brouwer}, there exists
	\[
		(\bfu_h^{n+1},\widetilde\mu_h^{n+1},\sigma_h^{n+1})
		\in\mathbb V_h
	\]
	such that
	\[
		\mathcal R(\bfu_h^{n+1},
		\widetilde\mu_h^{n+1},
		\sigma_h^{n+1})=0 .
	\]
	Set
	\[
		\phi_h^{n+1}
		=
		\tfrac{2\dt}{3}\sigma_h^{n+1}
		+
		\tfrac{4\phi_h^n-\phi_h^{n-1}}{3}.
	\]
	Then $D_{2\tau}\phi_h^{n+1}=\sigma_h^{n+1}, \ (\phi_h^{n+1},1)_h=m_0$.

	It remains to recover the mean of the chemical potential. Define
	\[
		g_h^{n+1}
		=
		\eps^2A_h\phi_h^{n+1}
		+
		\chi(\phi_h^{\dagger,n+1},\phi_h^{\dagger,n})
		-
		\phi_h^{*,n+1}.
	\]
	Choose $\overline\mu_h^{\,n+1} = \tfrac{(g_h^{n+1},1)_h}{(1,1)_h}$,
	and set $\mu_h^{n+1} = \widetilde\mu_h^{n+1} + \overline\mu_h^{\,n+1}$.
	Since the residual equation in the \(\eta_h\)-component holds for every
	\(\eta_h\in\mathsf C_{h,0}\), we have
	\[
		\mu_h^{n+1}
		-\eps^2A_h\phi_h^{n+1}
		-\chi(\phi_h^{\dagger,n+1},\phi_h^{\dagger,n})
		+\phi_h^{*,n+1}
		=0 .
	\]
	Thus \eqref{eq:scheme-mu} holds. The phase equation holds for all
	cell-centered test functions, because both sides have zero mean on constants. We next recover the pressure. Define
	\[
		\begin{aligned}
			\mathcal L_h(\bfv_h)
			:={} &
			-(D_{2\tau}\bfu_h^{n+1},\bfv_h)_h
			-\nu(D_h\bfu_h^{n+1},D_h\bfv_h)_h               \\
			     & -b_h(\bfu_h^{*,n+1},\bfu_h^{n+1},\bfv_h)
			-(\phi_h^{*,n+1}D_h\mu_h^{n+1},\bfv_h)_h .
		\end{aligned}
	\]
	The velocity equation has been solved on \(\mathsf U_h\), hence $\mathcal L_h(\bfv_h)=0
		\ \forall\,\bfv_h\in\mathsf E_h \text{with} d_h\bfv_h=0$.
	Lemma~\ref{lem:mac-infsup} gives a unique zero-mean pressure
	\(p_h^{n+1}\) such that
	\[
		(D_hp_h^{n+1},\bfv_h)_h
		=
		\mathcal L_h(\bfv_h)
		\qquad
		\forall\,\bfv_h\in\mathsf E_h .
	\]
	Thus a solution of \eqref{eq:scheme} exists.

	We now prove uniqueness. Let two solutions with the same history and the
	same pressure normalization be given. Write
	\begin{equation*}
		\widehat\bfu
		=
		\bfu_h^{(1)}-\bfu_h^{(2)},
		\
		\widehat\phi
		=
		\phi_h^{(1)}-\phi_h^{(2)}, \
		\widehat\mu
		=
		\mu_h^{(1)}-\mu_h^{(2)},
		\
		\widehat p
		=
		p_h^{(1)}-p_h^{(2)} .
	\end{equation*}
	By mass conservation and the pressure normalization, $(\widehat\phi,1)_h=0, \ (\widehat p,1)_h=0$. Subtracting the two systems and testing the momentum equation by
	\(\widehat\bfu\), the phase equation by \(\widehat\mu\), and the chemical
	potential equation by $D_{2\tau}\widehat\phi^{n+1} = \tfrac{3\widehat\phi}{2\dt}$,
	we obtain, after the usual cancellations,
	\[
		\begin{aligned}
			 & \tfrac{3}{2\dt}\norm{\widehat\bfu}_h^2
			+\nu\norm{D_h\widehat\bfu}_h^2
			+\tfrac{3\eps^2}{2\dt}\norm{D_h\widehat\phi}_h^2
			+\norm{D_h\widehat\mu}_h^2                \\
			 & \quad
			+
			\tfrac{3}{2\dt}
			\left(
			\chi(\phi_h^{(1),\dagger,n+1},\phi_h^{\dagger,n})
			-\chi(\phi_h^{(2),\dagger,n+1},\phi_h^{\dagger,n}),
			\widehat\phi
			\right)_h
			=0 .
		\end{aligned}
	\]
	Since $\widehat\phi = \frac23 \left(\phi_h^{(1),\dagger,n+1} - \phi_h^{(2),\dagger,n+1} \right)$, Lemma~\ref{lem:chi-estimates} shows that the last term is nonnegative. Therefore
	\[
		\widehat\bfu=0,
		\quad
		D_h\widehat\phi=0,
		\quad
		D_h\widehat\mu=0 .
	\]
	Since \((\widehat\phi,1)_h=0\), the discrete Poincar\'e inequality gives $\widehat\phi=0$.
	The chemical potential equation then gives $\widehat\mu=0$. Finally, the momentum equation reduces to
	\[
		(D_h\widehat p,\bfv_h)_h=0
		\quad
		\forall\,\bfv_h\in\mathsf E_h .
	\]
	Lemma~\ref{lem:mac-infsup} and \((\widehat p,1)_h=0\) imply $\widehat p=0$. The solution is unique.
\end{proof}

\begin{theorem}\label{thm:energy-stability}
	Any solution of \eqref{eq:scheme} satisfies the following discrete energy law,
	for \(1\le n\le N-1\):
	\begin{align}
		 & \calE_h^{n+1}-\calE_h^n
		+\dt\norm{D_h\mu_h^{n+1}}_h^2
		+\nu\dt\norm{D_h\bfu_h^{n+1}}_h^2 \notag \\
		 & \quad
		+\tfrac14\norm{\delta^2\bfu_h^{n+1}}_h^2
		+\tfrac{\eps^2}{4}\norm{D_h\delta^2\phi_h^{n+1}}_h^2
		+\tfrac34\norm{\delta^2\phi_h^{n+1}}_h^2=0.
		\label{eq:energy-law}
	\end{align}
	Here, for \(j\ge1\),
	\begin{align}\label{eq:discrete-energy}
		\calE_h^j
		 & =\calG(\bfu_h^j,\bfu_h^{j-1})
		+\eps^2\calG(D_h\phi_h^j,D_h\phi_h^{j-1})
		+(F(\phi_h^{\dagger,j}),1)_h
		+\tfrac38\norm{\phi_h^j-\phi_h^{j-1}}_h^2.
	\end{align}
	In particular, \(\calE_h^{n+1}\le \calE_h^n\).
\end{theorem}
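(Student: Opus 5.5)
The plan is to run the standard coupled energy test on \eqref{eq:scheme}, following the continuous derivation of \eqref{eq:continuous-energy-law}. Fix \(1\le n\le N-1\) and take \((\cdot,\cdot)_h\) inner products of:
\begin{itemize}
\item the momentum equation \eqref{eq:scheme-u} with \(\dt\,\bfu_h^{n+1}\);
\item the phase equation \eqref{eq:scheme-phi} with \(\dt\,\mu_h^{n+1}\);
\item the chemical-potential relation \eqref{eq:scheme-mu} with \(-\dt D_{2\tau}\phi_h^{n+1}\), i.e.\ with the BDF2 phase increment.
\end{itemize}
Then add the three identities.

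The steps, in order, are as follows.
\begin{enumerate}
\item In the momentum equation, \(b_h(\bfu_h^{*,n+1},\bfu_h^{n+1},\bfu_h^{n+1})=0\) by Lemma~\ref{lem:mac-convection-form}. This requires \(d_h\bfu_h^{*,n+1}=0\), which holds because \(d_h\bfu_h^n=d_h\bfu_h^{n-1}=0\): by \eqref{eq:scheme-div} at earlier steps, or by the hypothesis on the start-up levels.
\item The pressure term vanishes: \((D_hp_h^{n+1},\bfu_h^{n+1})_h=-(p_h^{n+1},d_h\bfu_h^{n+1})_h=0\) by Lemma~\ref{lem:mac-sbp-identities} and \eqref{eq:scheme-div}.
\item The viscous term gives \(\nu\dt\norm{D_h\bfu_h^{n+1}}_h^2\), and the chemical diffusion term \((A_h\mu_h^{n+1},\mu_h^{n+1})_h\) gives \(\dt\norm{D_h\mu_h^{n+1}}_h^2\), both by Lemma~\ref{lem:mac-sbp-identities}.
\item The capillary term \((\phi_h^{*,n+1}D_h\mu_h^{n+1},\bfu_h^{n+1})_h\) cancels the transport term \((d_h(\phi_h^{*,n+1}\bfu_h^{n+1}),\mu_h^{n+1})_h\). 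This uses the third summation-by-parts identity together with the product definition \eqref{eq:mac-product-def}: both terms reduce to pairings of \(A_x\phi\,u_1\) and \(A_y\phi\,u_2\) against \(D_x\mu\) and \(D_y\mu\), with opposite signs.
\item The term \(\dt(D_{2\tau}\phi_h^{n+1},\mu_h^{n+1})_h\) coming from the phase equation cancels the \(\mu\)-term coming from the chemical-potential test.
\end{enumerate}

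After these cancellations, three time-difference terms remain, and I handle each with Lemma~\ref{lem:bdf2-algebra}.
\begin{itemize}
\item \(\dt(D_{2\tau}\bfu_h^{n+1},\bfu_h^{n+1})_h\): the first identity of Lemma~\ref{lem:bdf2-algebra} gives \(\calG(\bfu_h^{n+1},\bfu_h^n)-\calG(\bfu_h^n,\bfu_h^{n-1})+\frac14\norm{\delta^2\bfu_h^{n+1}}_h^2\).
\item \(\eps^2\dt(A_h\phi_h^{n+1},D_{2\tau}\phi_h^{n+1})_h=\eps^2\dt(D_h\phi_h^{n+1},D_{2\tau}D_h\phi_h^{n+1})_h\): the same identity applied to the vector grid function \(D_h\phi_h^j\) gives the \(\eps^2\calG\) difference plus \(\frac{\eps^2}4\norm{D_h\delta^2\phi_h^{n+1}}_h^2\). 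This uses that \(D_h\) commutes with the time-difference operators.
\item The bulk term: the second identity of Lemma~\ref{lem:bdf2-algebra} gives the difference of \((F(\phi_h^{\dagger,j}),1)_h\), the \(\frac38\) telescoping term, and \(\frac34\norm{\delta^2\phi_h^{n+1}}_h^2\).
\end{itemize}
Collecting these terms with the definition \eqref{eq:discrete-energy} yields \eqref{eq:energy-law} exactly. The monotonicity \(\calE_h^{n+1}\le\calE_h^n\) follows because every other term in \eqref{eq:energy-law} is nonnegative.

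I expect the main obstacle to be the edge case \(n=1\), since \(D_{2\tau}q^1\) is defined separately. In the statement, however, \(n\ge1\) means the scheme computes level \(n+1\ge2\), so all increments involved are genuine BDF2 ones; the only inputs at level \(1\) are \(\calE_h^1\), which is well defined for \(j\ge1\), and \(\phi_h^{\dagger,1}=(3\phi_h^1-\phi_h^0)/2\). Beyond that, the only point needing care is the discrete transport–capillary cancellation on the MAC grid with no-slip and no-flux boundary values. It should follow directly from Lemma~\ref{lem:mac-sbp-identities} once the product is written componentwise.
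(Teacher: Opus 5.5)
Your proposal is correct and follows essentially the same route as the paper. You test the momentum and phase equations by $\dt\bfu_h^{n+1}$ and $\dt\mu_h^{n+1}$, use skew-symmetry, summation by parts and the transport--capillary cancellation, then pair the chemical-potential relation with $\dt D_{2\tau}\phi_h^{n+1}$ and apply both identities of Lemma~\ref{lem:bdf2-algebra} (to $\bfu_h$, to $D_h\phi_h$, and to the bulk term); testing by $-\dt D_{2\tau}\phi_h^{n+1}$ and adding, rather than substituting as the paper does, is immaterial, and your observation that $n\ge1$ involves only genuine BDF2 increments is accurate.
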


\begin{proof}
	Test \eqref{eq:scheme-u} with \(\dt\bfu_h^{n+1}\) and
	\eqref{eq:scheme-phi} with \(\dt\mu_h^{n+1}\). By
	Lemma~\ref{lem:mac-sbp-identities}, \eqref{eq:scheme-div}, and Lemma~\ref{lem:mac-convection-form},
	\[
		(D_hp_h^{n+1},\bfu_h^{n+1})_h
		=-(p_h^{n+1},d_h\bfu_h^{n+1})_h=0,
		\quad
		b_h(\bfu_h^{*,n+1},\bfu_h^{n+1},\bfu_h^{n+1})=0,
	\]
	where \(d_h\bfu_h^{*,n+1}=0\) follows from the divergence-free history. The
	phase-transport and capillary terms cancel by Lemma~\ref{lem:mac-sbp-identities}:
	\[
		(\phi_h^{*,n+1}D_h\mu_h^{n+1},\bfu_h^{n+1})_h
		+(d_h(\phi_h^{*,n+1}\bfu_h^{n+1}),\mu_h^{n+1})_h=0.
	\]
	Adding the two tested equations gives
	\begin{align}\label{eq:energy-tested-balance}
		 & \dt(D_{2\tau}\bfu_h^{n+1},\bfu_h^{n+1})_h
		+\nu\dt\norm{D_h\bfu_h^{n+1}}_h^2
		\notag                                       \\
		 & \quad
		+\dt(D_{2\tau}\phi_h^{n+1},\mu_h^{n+1})_h
		+\dt\norm{D_h\mu_h^{n+1}}_h^2=0 .
	\end{align}

	Next test \eqref{eq:scheme-mu} with \(\dt D_{2\tau}\phi_h^{n+1}\):
	\begin{align}\label{eq:energy-phase-rewrite}
		\dt(D_{2\tau}\phi_h^{n+1},\mu_h^{n+1})_h
		 & =\eps^2\dt(A_h\phi_h^{n+1},D_{2\tau}\phi_h^{n+1})_h \notag \\
		 & \quad+\dt(\chi(\phi_h^{\dagger,n+1},\phi_h^{\dagger,n})
		-\phi_h^{*,n+1},D_{2\tau}\phi_h^{n+1})_h .
	\end{align}
	Apply Lemma~\ref{lem:bdf2-algebra} to the velocity term in
	\eqref{eq:energy-tested-balance} and, after Lemma~\ref{lem:mac-sbp-identities}, to the gradient
	term in \eqref{eq:energy-phase-rewrite}. The potential term is given by
	Lemma~\ref{lem:bdf2-algebra}, and substitution gives \eqref{eq:energy-law}.
\end{proof}

\section{Convergence analysis}

For \(\psi,q\in C(\overline\Omega)\),
\(\bfv=(v_1,v_2)\in C(\overline\Omega)^2\), and the phase field \(\phi(t)\),
define
\begin{equation}
	\label{eq:restriction-operators}
	\begin{aligned}
		(R_c\psi)_{i+\frac12,j+\frac12}
		 & =\psi(x_{i+\frac12},y_{j+\frac12}), \\
		(R_e\bfv)_{1,i,j+\frac12}
		 & =v_1(x_i,y_{j+\frac12}),            \\
		(R_e\bfv)_{2,i+\frac12,j}
		 & =v_2(x_{i+\frac12},y_j),            \\
		R_p q
		 & =R_c q-\overline{R_c q},            \\
		\overline{\eta_h}
		 & =|\Omega|^{-1}(\eta_h,1)_h,         \\
		I_c\phi(t)
		 & =R_c\phi(t)
		+\overline{R_c\phi(t_0)}
		-\overline{R_c\phi(t)} .
	\end{aligned}
\end{equation}
Here \(R_e\bfv\) is restricted to the $\mathsf E_h$. By construction we have $D_hI_c\phi=D_hR_c\phi, \ A_hI_c\phi=A_hR_c\phi$.
If $\phi\in L^\infty(0,T;W^{2,\infty}(\Omega))$, then
\[
	\norm{I_c\phi(t)-R_c\phi(t)}_h
	\le
	Ch^2
	\norm{\phi}_{L^\infty(0,T;W^{2,\infty})}.
\]

Define the velocity projection by
\begin{equation}\label{eq:velocity-projection-def}
	\Pi_u\bfv
	=
	R_e\bfv+D_h\theta_{\bfv},
\end{equation}
where \(\theta_{\bfv}\in\mathsf C_{h,0}\) solves
\begin{equation}\label{eq:velocity-projection-poisson}
	(D_h\theta_{\bfv},D_h\eta_h)_h
	=
	(d_hR_e\bfv-R_c(\nabla\!\cdot\!\bfv),\eta_h)_h,
	\quad
	\forall\,\eta_h\in\mathsf C_{h,0}.
\end{equation}
We assume the following regularity and start-up estimates.

\begin{assumption}\label{ass:smooth-solution}
	The exact solution of
	\eqref{eq:continuous-chns}--\eqref{eq:continuous-bc}
	satisfies
	\[
		\phi\in W^{3,\infty}(0,T;W^{2,\infty})\cap W^{2,\infty}(0,T;W^{5,\infty}),
		\quad
		\mu\in W^{2,\infty}(0,T;W^{3,\infty}),
	\]
	\[
		\bfu\in W^{3,\infty}(0,T;L^2)^2\cap
		W^{2,\infty}(0,T;W^{3,\infty})^2,
		\quad
		p\in W^{2,\infty}(0,T;W^{2,\infty}).
	\]
\end{assumption}

\begin{assumption}\label{ass:start-up}
	The initial BDF2 energy satisfies $\calE_h^1\le C$.
	Moreover,
	\begin{align*}
		 & (\phi_h^j,1)_h=(I_c\phi(t_j),1)_h,
		\qquad j=0,1,                           \\
		 & \norm{I_c\phi(t_j)-\phi_h^j}_h
		+\norm{D_h(I_c\phi(t_j)-\phi_h^j)}_h
		+\norm{\Pi_u\bfu(t_j)-\bfu_h^j}_h       \\
		 & \quad
		+\dt^{1/2}\norm{D_h(\Pi_u\bfu(t_j)-\bfu_h^j)}_h
		+\norm{R_c\mu(t_j)-\mu_h^j}_h
		+\norm{D_h(R_c\mu(t_j)-\mu_h^j)}_h      \\
		 & \quad
		+\norm{R_p p(t_j)-p_h^j}_h
		\le C(\dt^2+h^2),
		\qquad j=0,1,                           \\
		 & \norm{\mu_h^j}_h+\norm{D_h\mu_h^j}_h
		+\norm{A_h\phi_h^j}_h+\norm{D_h\bfu_h^j}_h
		\le C,
		\qquad j=0,1,                           \\
		 & \norm{\phi_h^1-\phi_h^0}_h
		+\norm{A_h(\phi_h^1-\phi_h^0)}_h
		+\norm{\mu_h^1-\mu_h^0}_h
		\le C\dt .
	\end{align*}
	The first-step residuals obtained from the projected exact data satisfy the
	same \(O(\dt^2+h^2)\) bounds in the corresponding norms.
\end{assumption}

\begin{lemma}[\cite{RuiLi2017,LiShen2020NS,LiShen2020}]
	\label{lem:mac-projections}
	Let \(\psi\in W^{3,\infty}(\Omega)\), \(\zeta\in W^{5,\infty}(\Omega)\),
	\(\bfv\in W^{3,\infty}(\Omega)^2\), and \(q\in W^{2,\infty}(\Omega)\), where
	\(q\) has zero continuous mean. Then
	\begin{equation*}
		\begin{aligned}
			\norm{D_hR_c\psi-R_e\nabla\psi}_h
			 & \le Ch^2\norm{\psi}_{W^{3,\infty}},  \\
			\norm{A_hR_c\zeta-R_c(-\Delta\zeta)}_h
			+\norm{D_h\bigl(A_hR_c\zeta-R_c(-\Delta\zeta)\bigr)}_h
			 & \le Ch^2\norm{\zeta}_{W^{5,\infty}}, \\
			\norm{\Pi_u\bfv-R_e\bfv}_h
			+h\norm{D_h(\Pi_u\bfv-R_e\bfv)}_h
			 & \le Ch^2\norm{\bfv}_{W^{3,\infty}},  \\
			\norm{R_p q-R_c q}_h
			 & \le Ch^2\norm{q}_{W^{2,\infty}}.
		\end{aligned}
	\end{equation*}
	If \(d_hR_e\bfv\) and \(R_c(\nabla\cdot\bfv)\) have the same discrete mean,
	then
	\[
		d_h\Pi_u\bfv=R_c(\nabla\cdot\bfv),
	\]
	in particular \(d_h\Pi_u\bfu(t)=0\) for the incompressible exact velocity.
\end{lemma}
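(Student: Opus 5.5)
The plan is to prove the four estimates of Lemma~\ref{lem:mac-projections} one at a time. Each follows from Taylor expansion on the uniform staggered grid. The two operator estimates additionally need discrete elliptic stability.

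\emph{Gradient and Laplacian consistency.} For \(\norm{D_hR_c\psi-R_e\nabla\psi}_h\), note that \((D_xR_c\psi)_{i,j+\frac12}\) is a centered difference about the edge point \((x_i,y_{j+\frac12})\). At interior edges, Taylor expansion gives the pointwise error \(\tfrac{h_x^2}{24}\partial_x^3\psi(\xi)\), so the \(\ell^2\) bound is \(Ch^2\norm{\psi}_{W^{3,\infty}}\). Only interior edges carry unknowns in \(\mathsf E_h\), so no boundary correction is needed here.

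For the Laplacian, I use \(A_hR_c\zeta=-d_hD_hR_c\zeta\). At interior cells the five-point stencil gives a local truncation error \(\tfrac{h^2}{12}(\partial_x^4+\partial_y^4)\zeta\). At cells adjacent to \(\partial\Omega\), the ghost value from the no-flux condition together with \(\partial_{\mathbf n}\zeta=0\) gives a one-sided difference. Since \(\partial_{\mathbf n}\zeta=0\), a reflection/Taylor argument shows the flux at the boundary face vanishes exactly to the needed order, which again yields \(O(h^2)\) pointwise. I will assume \(\zeta\) satisfies the homogeneous Neumann condition, which holds for \(\phi,\mu\) in the application.

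For \(D_h\) of this truncation error, I write it as a smooth function \(h^2 g(x)\) plus an \(O(h^3)\) remainder. The remainder comes from expanding to one more order, which is why \(W^{5,\infty}\) is required. Differencing gives \(D_h(h^2R_cg)=O(h^2)\), and the remainder contributes \(O(h^3)/h=O(h^2)\). Boundary cells need the same care. There the reflection symmetry of \(\zeta\) (odd-order normal derivatives vanish up to order three) makes the error expansion smooth as well.

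\emph{Velocity projection.} By \eqref{eq:velocity-projection-def}, \(\Pi_u\bfv-R_e\bfv=D_h\theta_{\bfv}\). Testing \eqref{eq:velocity-projection-poisson} with \(\eta_h=\theta_{\bfv}\) gives
\[
\norm{D_h\theta_{\bfv}}_h^2\le\norm{r_h}_{H_h^{-1}}\norm{D_h\theta_{\bfv}}_h,
\quad r_h=d_hR_e\bfv-R_c(\nabla\cdot\bfv).
\]
The MAC divergence \(d_hR_e\bfv\) is a centered difference at cell centers, so \(r_h=O(h^2)\) pointwise. The boundary edges carry \(\bfv\cdot\mathbf n=0\), which gives \(\norm{r_h}_h\le Ch^2\). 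By the Poincaré inequality of Lemma~\ref{lem:mac-discrete-estimates}, \(\norm{r_h}_{H_h^{-1}}\le C\norm{r_h}_h\), so \(\norm{D_h\theta_{\bfv}}_h\le Ch^2\).

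For the \(h\)-weighted gradient term I use the inverse inequality \(\norm{D_h\boldsymbol z_h}_h\le Ch^{-1}\norm{\boldsymbol z_h}_h\). This gives \(h\norm{D_h D_h\theta_{\bfv}}_h\le C\norm{D_h\theta_{\bfv}}_h\le Ch^2\). I must confirm that \(r_h\) has zero mean so the Poisson problem is solvable. Testing against \(\eta_h\in\mathsf C_{h,0}\) only sees the mean-free part of \(r_h\), so this is harmless.

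\emph{Pressure and divergence identity.} The bound \(\norm{R_pq-R_cq}_h=|\Omega|^{1/2}|\overline{R_cq}|\) reduces to estimating \(|\overline{R_cq}|\). This is the midpoint-rule quadrature error for \(\int_\Omega q=0\), which is \(O(h^2\norm{q}_{W^{2,\infty}})\).

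For the divergence identity, \(d_h\Pi_u\bfv=d_hR_e\bfv+d_hD_h\theta_{\bfv}=d_hR_e\bfv-A_h\theta_{\bfv}\). The Poisson equation says \(A_h\theta_{\bfv}\) equals \(r_h\) modulo constants, and the equal-means hypothesis removes that constant, so \(d_h\Pi_u\bfv=R_c(\nabla\cdot\bfv)\). For the exact velocity, \(\nabla\cdot\bfu=0\). Moreover \(d_hR_e\bfu\) has mean zero by telescoping with the no-slip boundary values, so \(d_h\Pi_u\bfu(t)=0\).

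The main obstacle I expect is the \(D_h\) estimate of the Laplacian truncation error at boundary-adjacent cells. A naive bound there loses a factor \(h\). I would first try the even-reflection extension of \(\zeta\) across \(\partial\Omega\), which is \(C^{4}\) up to the needed order when the odd normal derivatives vanish. If that is insufficient, I would fall back on treating the boundary layer of cells separately.
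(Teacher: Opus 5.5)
Your route is the paper's: Taylor expansion for the gradient and Laplacian defects, testing \eqref{eq:velocity-projection-poisson} with $\eta_h=\theta_{\bfv}$ plus Poincar\'e and the inverse inequality for $\Pi_u$, the midpoint rule for $\overline{R_cq}$, and the ``difference is a constant'' argument for $d_h\Pi_u\bfv$. You are also right that boundary conditions must enter, and here you are more careful than the paper. Its statement has no boundary hypotheses, and its proof calls the first two bounds standard Taylor estimates. Your use of $\bfv\cdot\mathbf n=0$ for the divergence defect is correct.

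The genuine gap is your claim that $\partial_{\mathbf n}\zeta=0$ gives an $O(h^2)$ Laplacian defect at boundary-adjacent cells. At the cell next to $x=0$, with the other variable frozen, the $x$-part of $A_hR_c\zeta$ is $-h^{-2}\bigl(\zeta(\tfrac{3h}{2})-\zeta(\tfrac h2)\bigr)$. The interior-face flux carries the error $\tfrac{h^2}{24}\partial_x^3\zeta$. At interior cells this cancels against the same error on the opposite face. Here the boundary-face flux is exact, so nothing cancels:
\[
-h^{-2}\bigl(\zeta(\tfrac{3h}{2})-\zeta(\tfrac h2)\bigr)+\partial_x^2\zeta(\tfrac h2)=-\tfrac{h}{24}\,\partial_x^3\zeta(0)+O(h^2).
\]
For example, $\zeta=x^3$ satisfies $\partial_x\zeta(0)=0$, yet the discrete value is $-\tfrac{13}{4}h$ against the exact $-3h$. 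With the Neumann condition alone, the $\ell^2$ defect is therefore only $O(h^{3/2})$, and $D_h$ of the defect is $O(1)$ across the first interior face. The estimate itself fails, so your fallback of treating the boundary layer separately cannot rescue it.

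What you need is $\partial_{\mathbf n}\zeta=\partial_{\mathbf n}^3\zeta=0$ on $\partial\Omega$. Then the even reflection of $\zeta$ is $W^{5,\infty}$ across each wall, and the ghost-cell stencil is the interior stencil applied to it. Your $h^2g+O(h^3)$ argument then gives both bounds. Your parenthetical about vanishing odd normal derivatives is exactly this condition, but it does not follow from $\partial_{\mathbf n}\zeta=0$ and has to be checked.

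For the exact phase it holds: on $x=0$, $\phi_x\equiv0$ gives $\phi_{xyy}=0$, so $0=\mu_x=-\eps^2\phi_{xxx}$. For $\mu$ it fails in general. Differentiating the phase equation at $x=0$ gives $\mu_{xxx}=\partial_xu_2\,\partial_y\phi$. So your remark that the hypothesis ``holds for $\phi,\mu$'' is wrong for $\mu$.

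This does not break the paper, because the strong second estimate is only used with $\zeta=\phi$ (in $\calR_\mu$). For $\mu$ only an $H_h^{-1}$ bound is needed. You can get it by writing $A_hR_c\mu-R_c(-\Delta\mu)$ as $-d_h$ of an $O(h^2)$ face-flux defect that vanishes on boundary faces, plus an $O(h^2)$ cell term, and then applying Lemma~\ref{lem:mac-sbp-identities}.
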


\begin{proof}
	The first two estimates are standard centered-difference Taylor estimates.
	Since
	\[
		R_pq-R_cq=-\overline{R_cq},
	\]
	the last estimate follows from midpoint quadrature and the zero-mean
	condition on \(q\).

	Set $\delta_h=d_hR_e\bfv-R_c(\nabla\!\cdot\!\bfv)$, a Taylor expansion gives $\norm{\delta_h}_h
		\le Ch^2\norm{\bfv}_{W^{3,\infty}}$.
	Taking \(\eta_h=\theta_{\bfv}\) in
	\eqref{eq:velocity-projection-poisson} and using the discrete
	Poincar\'e inequality yields
	\[
		\norm{D_h\theta_{\bfv}}_h
		\le C\norm{\delta_h}_h
		\le Ch^2\norm{\bfv}_{W^{3,\infty}}.
	\]
	Since $\Pi_u\bfv-R_e\bfv=D_h\theta_{\bfv}$,
	the first part of the projection estimate follows. The second follows from
	the  inverse inequality,
	\[
		h\norm{D_hD_h\theta_{\bfv}}_h
		\le C\norm{D_h\theta_{\bfv}}_h.
	\]

	Finally, for any \(\eta_h\in\mathsf C_{h,0}\),
	\[
		(d_h\Pi_u\bfv-R_c(\nabla\!\cdot\!\bfv),\eta_h)_h
		=(\delta_h,\eta_h)_h-(D_h\theta_{\bfv},D_h\eta_h)_h=0.
	\]
	Hence \(d_h\Pi_u\bfv-R_c(\nabla\!\cdot\!\bfv)\) is constant, and the mean
	compatibility implies
	\[
		d_h\Pi_u\bfv=R_c(\nabla\!\cdot\!\bfv).
	\]
\end{proof}

\subsection{A priori estimates}

We use the following discrete Gronwall inequalities.

\begin{lemma}[Discrete Gronwall, \cite{ShenXu2018}]
	\label{lem:discrete-gronwall}
	Suppose nonnegative sequences \(a_m,b_m,c_m\) satisfy
	$\dt\sum_{m=1}^{N}c_m\le C_1$
	and
	\[
		a_\ell+\dt\sum_{m=1}^{\ell}b_m
		\le C_2+\dt\sum_{m=1}^{\ell-1}c_m a_m,
		\qquad 1\le \ell\le N.
	\]
	Then
	\[
		a_\ell+\dt\sum_{m=1}^{\ell}b_m
		\le C_2e^{C_1},
		\qquad 1\le \ell\le N.
	\]
\end{lemma}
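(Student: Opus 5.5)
We prove the discrete Gronwall inequality (Lemma~\ref{lem:discrete-gronwall}) by induction on \(\ell\), using the auxiliary quantity
\[
S_\ell:=C_2+\dt\sum_{m=1}^{\ell-1}c_m a_m,\qquad S_1=C_2 .
\]
The hypothesis reads \(a_\ell+\dt\sum_{m=1}^{\ell}b_m\le S_\ell\) for each \(1\le\ell\le N\). Because the \(b_m\) are nonnegative, this gives \(a_\ell\le S_\ell\) in particular. The goal is to show
\[
S_\ell\le C_2\prod_{m=1}^{\ell-1}(1+\dt c_m).
\]
Then \(1+x\le e^x\) and \(\dt\sum_{m=1}^{N}c_m\le C_1\) give \(S_\ell\le C_2e^{C_1}\), and the claim follows from \(a_\ell+\dt\sum_{m\le\ell}b_m\le S_\ell\).

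The key recursion is
\[
S_{\ell+1}=S_\ell+\dt c_\ell a_\ell\le S_\ell+\dt c_\ell S_\ell=(1+\dt c_\ell)S_\ell .
\]
Here we use \(c_\ell\ge0\) together with the bound \(a_\ell\le S_\ell\) from the hypothesis at index \(\ell\). We start from \(S_1=C_2\) and iterate from \(\ell=1\) up to \(\ell-1\). This yields the product bound for every \(\ell\le N\); note that only \(c_1,\dots,c_{N-1}\) enter.

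No genuine obstacle is expected. The one point needing care is the nonnegativity of all three sequences. Without \(b_m\ge0\) we could not drop the dissipation sum to get \(a_\ell\le S_\ell\), and without \(c_m\ge0\) we could not multiply that inequality by \(\dt c_\ell\). It also matters that the sum on the right runs only to \(\ell-1\). This makes the estimate explicit, so no smallness condition on \(\dt c_\ell\) is needed, unlike the implicit version of the lemma.
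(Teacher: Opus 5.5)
Your proof is correct and matches the paper's approach: the paper only cites this lemma from Shen--Xu, but its proof of the weighted version in Appendix~\ref{app:gronwall-proofs} ends with the same argument, an induction giving $S_\ell\le K\prod_{m=1}^{\ell-1}(1+d_m)$ followed by $1+x\le e^x$. The final step $C_2\prod_{m=1}^{\ell-1}(1+\dt c_m)\le C_2e^{C_1}$ needs $C_2\ge0$, which you did not state but which follows from $0\le a_1\le S_1=C_2$.
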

\begin{lemma}[Weighted discrete Gronwall]
	\label{lem:weighted-gronwall}
	Let \(0\le\alpha<1\). Suppose nonnegative sequences \(a_m,b_m,c_m\) satisfy
	$\dt\sum_{m=1}^{N}c_m\le C_1$,
	and
	\[
		a_\ell+\dt\sum_{m=1}^{\ell}b_m
		\le C_2+\dt\sum_{m=1}^{\ell-1}c_m
		\sum_{j=1}^{m}\alpha^{m-j}a_j,
		\qquad 1\le \ell\le N.
	\]
	Then
	\[
		a_\ell+\dt\sum_{m=1}^{\ell}b_m
		\le
		\frac{C_2}{1-\alpha}
		\exp\!\left(\frac{C_1}{1-\alpha}\right),
		\qquad 1\le \ell\le N.
	\]
\end{lemma}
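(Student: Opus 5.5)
The plan is to reduce the weighted inequality to the standard discrete Gronwall inequality, Lemma~\ref{lem:discrete-gronwall}, by introducing the running maximum
\[
	M_\ell:=\max_{1\le j\le \ell}a_j,
\]
which is nondecreasing in \(\ell\). For each \(m\), the inner weighted sum will be bounded by
\[
	\sum_{j=1}^{m}\alpha^{m-j}a_j\le M_m\sum_{k=0}^{m-1}\alpha^k\le \frac{M_m}{1-\alpha}.
\]
Substituting this into the hypothesis gives, for every \(1\le \ell\le N\),
\[
	a_\ell+\dt\sum_{m=1}^{\ell}b_m\le C_2+\dt\sum_{m=1}^{\ell-1}\frac{c_m}{1-\alpha}M_m .
\]

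Next I pass from \(a_\ell\) to \(M_\ell\). Fix \(\ell\) and pick \(j\le\ell\) with \(a_j=M_\ell\). Apply the inequality above at index \(j\) and drop the nonnegative \(b\)-sum. Since \(c_m,M_m\ge0\), extending the right-hand sum from \(j-1\) up to \(\ell-1\) only increases it, so
\[
	M_\ell\le C_2+\dt\sum_{m=1}^{\ell-1}\frac{c_m}{1-\alpha}M_m .
\]
Adding this to the \(\ell\)-th inequality is wasteful because it doubles \(C_2\). Instead I set
\[
	\widetilde a_\ell:=\max\Bigl\{M_\ell,\ a_\ell+\dt\sum_{m=1}^{\ell}b_m\Bigr\}.
\]
Both quantities in the maximum are bounded by \(C_2+\dt\sum_{m<\ell}\tilde c_m M_m\), where \(\tilde c_m:=c_m/(1-\alpha)\). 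Since \(M_m\le\widetilde a_m\), this yields
\[
	\widetilde a_\ell\le C_2+\dt\sum_{m=1}^{\ell-1}\tilde c_m\widetilde a_m .
\]

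I then apply Lemma~\ref{lem:discrete-gronwall} to the triple \((\widetilde a_m,0,\tilde c_m)\), with constant \(C_1/(1-\alpha)\) replacing \(C_1\). This gives
\[
	\widetilde a_\ell\le C_2\exp\bigl(C_1/(1-\alpha)\bigr).
\]
This is in fact slightly sharper than the stated bound, which also carries the prefactor \(1/(1-\alpha)\ge1\), so the claim follows. If instead one wants to run Lemma~\ref{lem:discrete-gronwall} directly on \(a_\ell\), the prefactor would absorb the looser step of bounding \(a_\ell\) through \(M_\ell\).

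The only delicate step is the passage from \(a_\ell\) to the monotone envelope \(M_\ell\): the hypothesis controls \(a_\ell\) through past values of \(a\) weighted geometrically, not through \(a_\ell\) itself. This is handled by the monotonicity of the right-hand side in \(\ell\), which holds because \(c_m\) and \(M_m\) are nonnegative. The remaining steps are summing the geometric series and invoking the standard lemma.
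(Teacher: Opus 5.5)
Your proof is correct, and it takes a different route from the paper's. The paper never uses a maximum. It sets $A_m=a_m+\dt\sum_{r\le m}b_r$ and forms the geometrically weighted sums $S_m=\sum_{j\le m}\alpha^{m-j}A_j$. From the recursion $S_\ell=A_\ell+\alpha S_{\ell-1}$ and an interchange of summation it derives the closed Volterra inequality $S_\ell\le \frac{C_2}{1-\alpha}+\frac{\dt}{1-\alpha}\sum_{m<\ell}c_mS_m$. It then proves $S_\ell\le K\prod_{m<\ell}(1+d_m)$ by induction and concludes from $A_\ell\le S_\ell$.

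You instead bound the geometric convolution by the monotone envelope, $\sum_{j\le m}\alpha^{m-j}a_j\le M_m/(1-\alpha)$. Because the right-hand side is nondecreasing in $\ell$, this controls $M_\ell$ itself. You then close with $\widetilde a_\ell=\max\{M_\ell,A_\ell\}$, so Lemma~\ref{lem:discrete-gronwall} applies verbatim with kernel $c_m/(1-\alpha)$.

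Your route reuses the existing lemma and gives a sharper constant, $C_2\exp(C_1/(1-\alpha))$ instead of $\frac{C_2}{1-\alpha}\exp(C_1/(1-\alpha))$. In the paper the prefactor $1/(1-\alpha)$ cannot be avoided, because the controlled quantity $S_\ell$ is a weighted sum of the $A_j$ and can be as large as $\max_{j\le\ell}A_j/(1-\alpha)$. Your envelope $M_\ell$, by contrast, is of the same size as the $a_j$. The paper's argument is purely linear (sums and a recursion, with no choice of a maximizing index), so it is a bit more mechanical. Both give the same exponent $C_1/(1-\alpha)$, and either suffices for the use with $\alpha=1/3$ in Lemma~\ref{lem:linfty-phase}.
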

The proof of Lemma~\ref{lem:weighted-gronwall} is given in
Appendix~\ref{app:gronwall-proofs}.

\begin{lemma}\label{lem:base-consequences}
	Under Assumption~\ref{ass:start-up},
	\[
		\max_{1\le n\le N}
		\Bigl(
		\norm{\bfu_h^n}_h
		+\norm{D_h\phi_h^n}_h
		+\norm{\phi_h^{\dagger,n}}_{\ell^4}^2
		\Bigr)
		+
		\Bigl(
		\dt\sum_{n=1}^{N}\norm{D_h\mu_h^n}_h^2
		\Bigr)^{1/2}
		+
		\Bigl(
		\nu\dt\sum_{n=1}^{N}\norm{D_h\bfu_h^n}_h^2
		\Bigr)^{1/2}
		\le C .
	\]
\end{lemma}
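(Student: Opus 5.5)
The bound will follow by summing the discrete energy law of Theorem~\ref{thm:energy-stability} and then reading off each quantity from the energy functional. Summing \eqref{eq:energy-law} from $n=1$ to $m-1$ and dropping the nonnegative $\delta^2$ terms gives, for every $2\le m\le N$,
\[
\calE_h^m+\dt\sum_{n=2}^{m}\norm{D_h\mu_h^n}_h^2+\nu\dt\sum_{n=2}^{m}\norm{D_h\bfu_h^n}_h^2\le \calE_h^1\le C .
\]
The last inequality is the first part of Assumption~\ref{ass:start-up}. The $n=1$ contributions to the dissipation sums are bounded by $C$ directly, again from Assumption~\ref{ass:start-up}, via $\norm{D_h\mu_h^1}_h+\norm{D_h\bfu_h^1}_h\le C$ and $\dt\le T$. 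This already gives the two time-integrated terms of Lemma~\ref{lem:base-consequences}.

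For the pointwise-in-time terms I will bound each summand of $\calE_h^m$ from below. The $\calG$ terms are sums of squares, so $\calG(\bfu_h^m,\bfu_h^{m-1})\ge \tfrac14\norm{\bfu_h^m}_h^2$ and $\eps^2\calG(D_h\phi_h^m,D_h\phi_h^{m-1})\ge\tfrac{\eps^2}{4}\norm{D_h\phi_h^m}_h^2$; the term $\tfrac38\norm{\phi_h^m-\phi_h^{m-1}}_h^2$ is also nonnegative. Hence $\norm{\bfu_h^m}_h+\norm{D_h\phi_h^m}_h\le C$.

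The only step requiring care is the $\ell^4$ bound, since $F$ is not a pure quartic. I will use the pointwise inequality
\[
F(s)=\tfrac14(s^2-1)^2\ge \tfrac18 s^4-\tfrac14 ,
\]
which holds because $\tfrac14(s^4-2s^2+1)-\tfrac18s^4=\tfrac18(s^2-2)^2-\tfrac14$. Summing over the grid then gives $\tfrac18\norm{\phi_h^{\dagger,m}}_{\ell^4}^4\le (F(\phi_h^{\dagger,m}),1)_h+\tfrac14|\Omega|\le C$, and so $\norm{\phi_h^{\dagger,m}}_{\ell^4}^2\le C$. For $m=1$ all three quantities are controlled by $\calE_h^1\le C$ in the same way, or directly from the start-up bounds.

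Because every term of $\calE_h^j$ is nonnegative up to the harmless constant absorbed through $F$, no Gronwall argument is needed, and the constant $C$ depends only on $\calE_h^1$ and $|\Omega|$.
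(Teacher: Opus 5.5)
Your proposal is correct and is essentially the paper's proof. You sum the energy law \eqref{eq:energy-law} and bound $\calE_h^1$ and the missing $n=1$ dissipation terms by Assumption~\ref{ass:start-up}. You then extract the $\ell^4$ bound from the explicit inequality $F(s)\ge\frac18 s^4-\frac14$, a sharpened form of the paper's $F(s)\ge\frac18s^4-C$. Your one-sided bound $\calG(a,b)\ge\frac14\norm{a}_h^2$ replaces the paper's two-sided $\calG$-equivalence and is all that is needed. One small correction: the final constant also depends on $\eps$, $\nu$, $T$ and the start-up constants, not only on $\calE_h^1$ and $|\Omega|$.
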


\begin{proof}
	Summing \eqref{eq:energy-law} from \(n=1\) to \(m-1\), \(2\le m\le N\),
	and discarding nonnegative terms yields
	\[
		\max_{1\le n\le m}
		\Bigl(
		\norm{\bfu_h^n}_h^2
		+\norm{D_h\phi_h^n}_h^2
		+(F(\phi_h^{\dagger,n}),1)_h
		+\norm{\phi_h^n-\phi_h^{n-1}}_h^2
		\Bigr)
		+\dt\sum_{n=2}^{m}
		\Bigl(
		\norm{D_h\mu_h^n}_h^2
		+\nu\norm{D_h\bfu_h^n}_h^2
		\Bigr)
		\le C\calE_h^1 .
	\]
	Here we also used the equivalence of \(\calG\) and the two-level
	\(L^2\) norm. Assumption~\ref{ass:start-up} bounds \(\calE_h^1\) and
	provides the missing \(n=1\) dissipation terms, yielding the estimates for
	\(\bfu_h^n\), \(D_h\phi_h^n\), \(D_h\mu_h^n\), and \(D_h\bfu_h^n\).
	Furthermore,
	\(F(s)=\frac14s^4-\frac12s^2+\frac14\ge \frac18s^4-C\), so
	\((F(\phi_h^{\dagger,n}),1)_h\le C\) implies
	\(\norm{\phi_h^{\dagger,n}}_{\ell^4}^4\le C\).
\end{proof}

\begin{lemma}\label{lem:linfty-phase}
	Under Assumption~\ref{ass:start-up}, for each fixed \(T=N\dt\),
	\[
		\max_{1\le n\le N}\Bigl(
		\norm{A_h\phi_h^n}_h^2+\norm{\phi_h^n}_{\ell^\infty}^2
		+\norm{\mu_h^n}_h^2\Bigr)
		+\dt\sum_{n=1}^N\norm{D_{2\tau}\phi_h^n}_{H_h^{-1}}^2
		\le C_T .
	\]
	The constant \(C_T\) is independent of \(h\) and \(\dt\).
\end{lemma}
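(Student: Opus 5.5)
The plan is a higher-order energy estimate obtained by testing the phase equation \eqref{eq:scheme-phi} with \(\dt A_h\mu_h^{n+1}\) and combining it with the chemical-potential relation \eqref{eq:scheme-mu}, tested against \(\dt A_h D_{2\tau}\phi_h^{n+1}\). The convective term is then handled using the bounds already available from Lemma~\ref{lem:base-consequences}. In the simpler alternative, the phase equation is tested by \(\dt\mu_h^{n+1}\)-type test functions to produce \(\norm{D_{2\tau}\phi_h^n}_{H_h^{-1}}\), and the \(A_h\phi\) bound is read off from \eqref{eq:scheme-mu} as an elliptic problem.

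\textbf{Step 1: the \(H_h^{-1}\) bound.} For \(\psi_h\in\mathsf C_{h,0}\), take the inner product of \eqref{eq:scheme-phi} with \(\psi_h\) and use Lemma~\ref{lem:mac-sbp-identities}:
\[
(D_{2\tau}\phi_h^{n+1},\psi_h)_h=-(D_h\mu_h^{n+1},D_h\psi_h)_h+(\phi_h^{*,n+1}\bfu_h^{n+1},D_h\psi_h)_h .
\]
This gives
\[
\norm{D_{2\tau}\phi_h^{n+1}}_{H_h^{-1}}\le \norm{D_h\mu_h^{n+1}}_h+\norm{\phi_h^{*,n+1}}_{\ell^4}\norm{\bfu_h^{n+1}}_{\ell^4}.
\]
By Lemma~\ref{lem:mac-discrete-estimates} and Lemma~\ref{lem:base-consequences}, \(\norm{\phi_h^{*,n+1}}_{\ell^4}\le C\), since \(D_h\phi_h\) is bounded and the mass is fixed. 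Also
\[
\norm{\bfu_h^{n+1}}_{\ell^4}^2\le C\norm{\bfu_h^{n+1}}_h\norm{D_h\bfu_h^{n+1}}_h\le C\norm{D_h\bfu_h^{n+1}}_h .
\]
Squaring and summing, the dissipation sums in Lemma~\ref{lem:base-consequences} give \(\dt\sum\norm{D_{2\tau}\phi_h^n}^2_{H_h^{-1}}\le C\). The \(n=1\) term is handled by Assumption~\ref{ass:start-up}.

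\textbf{Step 2: the \(\mu\) and \(A_h\phi\) bounds.} The mean of \(\mu_h^{n+1}\) equals the mean of \(\chi(\phi^{\dagger,n+1}_h,\phi_h^{\dagger,n})-\phi_h^{*,n+1}\). This mean is bounded by the \(\ell^3\)-norms of the shifted states, which in turn are bounded by the \(\ell^6\) bounds from the \(H^1\) control. Poincaré then yields \(\norm{\mu_h^{n+1}}_h\le C(1+\norm{D_h\mu_h^{n+1}}_h)\). This is only square-summable, not pointwise bounded, so a pointwise bound needs an evolution estimate.

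For that, I test \eqref{eq:scheme-phi} with \(\dt A_h\mu_h^{n+1}\) written through \eqref{eq:scheme-mu}. Equivalently, I test \eqref{eq:scheme-mu} with \(\dt A_hD_{2\tau}\phi_h^{n+1}\) and substitute \(D_{2\tau}\phi_h^{n+1}=-A_h\mu_h^{n+1}-d_h(\phi^{*}\bfu)\). Lemma~\ref{lem:bdf2-algebra} applied to \(A_h\phi_h\) produces the telescoping term \(\eps^2[\calG(A_h\phi_h^{n+1},A_h\phi_h^n)-\calG(A_h\phi_h^n,A_h\phi_h^{n-1})]\) together with the dissipation \(\dt\norm{A_h\mu_h^{n+1}}_h^2\). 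The right-hand side then contains two terms:
\begin{itemize}
\item the nonlinear term \((D_h(\chi-\phi^*),D_hA_h\mu)\)-type, which I move onto \(\dt\norm{A_h\mu_h^{n+1}}_h^2\) using the gradient estimate of Lemma~\ref{lem:chi-estimates} together with \(\ell^\infty\) and \(\ell^4\) bounds from the \(\norm{A_h\phi_h}\) interpolation in Lemma~\ref{lem:mac-discrete-estimates};
\item the transport term \((d_h(\phi^*\bfu),A_h\mu)\), bounded by \(C(\norm{D_h\phi^*}_{\ell^4}\norm{\bfu}_{\ell^4}+\norm{\phi^*}_{\ell^\infty}\norm{D_h\bfu}_h)\norm{A_h\mu}_h\).
\end{itemize}
Both produce factors of the form \(c_m(1+\norm{A_h\phi_h^{m}}_h^2)\), summed over the history levels \(m,m-1\) that appear in \(\phi^{*}\) and \(\phi^\dagger\). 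Here \(c_m\) involves \(\norm{D_h\bfu_h^m}_h^2+\norm{D_h\mu_h^m}_h^2+1\), whose time sum is bounded by Lemma~\ref{lem:base-consequences}.

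\textbf{Main obstacle.} The cubic growth makes the nonlinear estimate superlinear in \(\norm{A_h\phi}\). I plan to use 2D Gagliardo–Nirenberg/Agmon-type interpolation together with the \(H^1\) bound to reduce it to \(\norm{A_h\phi}^{1+\theta}\), absorbed by Young's inequality into \(\norm{A_h\mu}\). If that reduction fails, I will instead bound \(\chi\) with \(\ell^6\) norms only, which requires no \(A_h\phi\) at all: the bound is \(\norm{D_h\chi}\le C\norm{\phi}_{\ell^\infty}^2\norm{D_h\phi}\), with \(\norm{\phi}_{\ell^\infty}^2\le C\norm{A_h\phi}\) by the discrete Agmon inequality. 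The history terms arising from \(\phi^*\) and \(\phi^\dagger\) are then handled by the Gronwall Lemma~\ref{lem:discrete-gronwall}, or the weighted version Lemma~\ref{lem:weighted-gronwall} if the \(\calG\)-functional only controls a combination of levels.

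The result is \(\max_n\norm{A_h\phi_h^n}_h\le C_T\), and \(\norm{\phi_h^n}_{\ell^\infty}\le C_T\) follows from Lemma~\ref{lem:mac-discrete-estimates}. Finally \(\norm{\mu_h^n}_h\le C_T\) follows directly from \eqref{eq:scheme-mu}, since \(\norm{\mu_h^n}_h\le\eps^2\norm{A_h\phi_h^n}_h+\norm{\chi-\phi^*}_h\) and the second term is bounded by the \(\ell^\infty\) bounds.
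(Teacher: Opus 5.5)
Your Step 1 is correct; it is exactly how the paper ends its proof. The gap is in Step 2, where the estimate does not close.

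\textbf{Why Step 2 fails.} After testing by $\dt A_h\mu_h^{n+1}$ and $\dt A_hD_{2\tau}\phi_h^{n+1}$, the nonlinear contribution is $\dt\bigl(A_h[\chi(\phi_h^{\dagger,n+1},\phi_h^{\dagger,n})-\phi_h^{*,n+1}],D_{2\tau}\phi_h^{n+1}\bigr)_h$. Here $D_{2\tau}\phi_h^{n+1}$ is controlled only in $\ell^2$, through $\norm{A_h\mu_h^{n+1}}_h$ and the transport term. So you really need $\norm{A_h\chi}_h$.

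Your fallback via $\norm{D_h\chi}_h$ does not help. Pairing it with $D_hD_{2\tau}\phi_h^{n+1}$ would require $D_hA_h\mu_h^{n+1}$, which nothing controls.

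Set $X=\norm{A_h\phi_h^{n+1}}_h+\norm{A_h\phi_h^n}_h+\norm{A_h\phi_h^{n-1}}_h$. A discrete Agmon inequality together with $\norm{D_h\phi_h}_{\ell^4}^2\le C\norm{D_h\phi_h}_h\norm{A_h\phi_h}_h$ gives at best $\norm{A_h\chi}_h\le C(1+X^2)$. With only the $\ell^\infty$ bound of Lemma~\ref{lem:mac-discrete-estimates} it is cubic. Young's inequality therefore leaves $C\dt\norm{A_h\phi_h^{n+1}}_h^4$ on the right.

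\textbf{This term is not a history term.} $\phi_h^{\dagger,n+1}=\tfrac12(3\phi_h^{n+1}-\phi_h^n)$ contains the unknown level. Several routes to handle it fail:
\begin{itemize}
\item It cannot be absorbed into $\dt\norm{A_h\mu_h^{n+1}}_h^2$, because $\norm{A_h\phi_h^{n+1}}_h$ is controlled by $\norm{A_h\mu_h^{n+1}}_h$ at best linearly.
\item Neither Lemma~\ref{lem:discrete-gronwall} nor Lemma~\ref{lem:weighted-gronwall} applies, since both need the right-hand side to stop at level $\ell-1$.
\item Writing it as $\dt c_{n+1}y_{n+1}$ with $c_{n+1}\sim\norm{A_h\phi_h^{n+1}}_h^2$ requires $\dt c_{n+1}\le\tfrac12$. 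The energy law only gives $\dt\norm{A_h\phi_h^{n+1}}_h^2\le C$, not a small constant.
\end{itemize}
The continuous-time version of your argument closes, but the implicit discrete one does not. Repairing it would need a sub-quadratic (Brezis--Gallouet type) discrete interpolation, plus a time-step restriction and a bootstrap. None of these is in your proposal, and the lemma as stated carries no step-size restriction.

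\textbf{How the paper avoids this.} It works one derivative lower, at the level of $\norm{\mu_h}_h$.
\begin{itemize}
\item It applies the shifted BDF2 combination to $\mu_h^k=\eps^2A_h\phi_h^k+G_h^k$, where $G_h^k=\chi(\phi_h^{\dagger,k},\phi_h^{\dagger,k-1})-\phi_h^{*,k}$, and tests the result by $\mu_h^{\dagger,n+1}$.
\item The shifted phase equation then supplies the dissipation $\eps^2\dt\norm{D_{2\tau}\phi_h^{n+1}}_h^2$; see \eqref{eq:mu-shifted-start}.
\item The cubic enters only through increments $\chi(\phi_h^{\dagger,k+1},\phi_h^{\dagger,k})-\chi(\phi_h^{\dagger,k},\phi_h^{\dagger,k-1})$. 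By Lemma~\ref{lem:chi-estimates} these cost only energy-bounded $\ell^6$ norms of $\phi_h^\dagger$, times $\dt\norm{D_{2\tau}\phi_h}_h$ and $\norm{\mu_h^{\dagger,n+1}}_{\ell^6}\le C(1+\norm{D_h\mu_h^{\dagger,n+1}}_h)$.
\item Everything is linear in the unknown high norms. The only high-norm coefficient, $\norm{A_h\phi_h^{*,k}}_h$, involves earlier levels only and is bounded through the $\mu_h$-history.
\item The weighted Gronwall lemma with $\alpha=1/3$ then closes, with no step-size restriction.
\end{itemize}
Your final deductions (the bound on $\norm{A_h\phi_h}_h$ from \eqref{eq:scheme-mu}, then the $\ell^\infty$ bound) go through once $\norm{\mu_h^n}_h$ is bounded in this way.
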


\begin{proof}
	It suffices first to prove the stronger bound
	\begin{equation}\label{eq:strong-phase-time-bound-target}
		\max_{1\le n\le N}\norm{\mu_h^n}_h^2
		+\dt\sum_{n=1}^N\norm{D_{2\tau}\phi_h^n}_h^2\le C_T .
	\end{equation}
	The asserted \(H_h^{-1}\) estimate then follows at the end from the phase
	equation. We use two identities following from the
	shifted BDF2 quantities in \eqref{eq:bdf2-defs}:
	\begin{equation}\label{eq:shifted-bdf2-increment}
		\phi_h^{\dagger,n+1}-\phi_h^{\dagger,n}=
		\dt D_{2\tau}\phi_h^{n+1},
		\quad
		q^{n+1}=\tfrac23 q^{\dagger,n+1}+\tfrac13 q^n
	\end{equation}
	for any sequence \(q^n\). Iterating the second identity gives
	\[
		\begin{aligned}
			q^{n+1}
			 & =\tfrac23 q^{\dagger,n+1}+\tfrac13 q^n
			=\tfrac23 q^{\dagger,n+1}
			+\tfrac23\left(\tfrac13\right)q^{\dagger,n}
			+\left(\tfrac13\right)^2q^{n-1}           \\
			 & =\cdots
			=\tfrac23\sum_{j=1}^{n+1}\left(\tfrac13\right)^{n+1-j}q^{\dagger,j}
			+\left(\tfrac13\right)^{n+1}q^0 .
		\end{aligned}
	\]
	Consequently,
	\begin{equation}\label{eq:shifted-bdf2-history}
		\begin{aligned}
			\norm{q^{n+1}}_h^2 & \leq \tfrac43 \norm{\sum_{j=1}^{n+1}\left(\tfrac13\right)^{n+1-j}q^{\dagger,j}}^2
			+2 \left(\tfrac13\right)^{2n+2} \norm{q^0}^2                                                           \\
			                   & = \left\|\sum_{j=1}^{n+1}\left(\tfrac13\right)^{(n+1-j)/2}
			\left[\left(\tfrac13\right)^{(n+1-j)/2}q^{\dagger,j}\right]
			\right\|_h^2 +2 \left(\tfrac13\right)^{2n+2} \norm{q^0}^2                                              \\
			                   & \le C\sum_{j=1}^{n+1}\left(\tfrac13\right)^{n+1-j}
			\norm{q^{\dagger,j}}_h^2+C\left(\tfrac13\right)^n\norm{q^0}_h^2 .
		\end{aligned}
	\end{equation}

	Define $G_h^k=\mu_h^k-\eps^2A_h\phi_h^k \ (k \geq 1)$. For \(k\ge2\), \eqref{eq:scheme-mu} implies
	\[
		G_h^k=\chi(\phi_h^{\dagger,k},\phi_h^{\dagger,k-1})-
		\phi_h^{*,k} .
	\]
	Hence,
	\begin{equation}\label{eq:first-G-difference}
		\mu_h^2-\mu_h^1=\eps^2A_h(\phi_h^2-\phi_h^1)+G_h^2-G_h^1.
	\end{equation}
	Testing \eqref{eq:first-G-difference} by \(\mu_h^2\) and using
	\((a-b,a)_h\ge \tfrac12\norm{a}_h^2-\tfrac12\norm{b}_h^2\) gives
	\begin{equation}\label{eq:first-G-tested}
		\begin{aligned}
			\frac12\norm{\mu_h^2}_h^2
			 & \le
			C
			+\eps^2(A_h(\phi_h^2-\phi_h^1),\mu_h^2)_h
			+(G_h^2-G_h^1,\mu_h^2)_h
			\\
			 & =
			C
			+\tfrac{2\eps^2\dt}{3}
			(A_hD_{2\tau}\phi_h^2,\mu_h^2)_h
			+\tfrac{\eps^2}{3}
			(A_h(\phi_h^1-\phi_h^0),\mu_h^2)_h
			+(G_h^2-G_h^1,\mu_h^2)_h .
		\end{aligned}
	\end{equation}
	At $n = 2$, \eqref{eq:scheme-phi} gives
	\[
		D_{2\tau}\phi_h^2+A_h\mu_h^2+d_h(\phi_h^{*,2}\bfu_h^2)=0.
	\]
	Testing by \(D_{2\tau}\phi_h^2\) and using Lemma~\ref{lem:mac-sbp-identities},
	\begin{equation}\label{eq:first-phase-pairing}
		(A_hD_{2\tau}\phi_h^2,\mu_h^2)_h
		=-\norm{D_{2\tau}\phi_h^2}_h^2
		-(d_h(\phi_h^{*,2}\bfu_h^2),D_{2\tau}\phi_h^2)_h .
	\end{equation}
	Substituting \eqref{eq:first-phase-pairing} into
	\eqref{eq:first-G-tested}, moving
	\(\norm{D_{2\tau}\phi_h^2}_h^2\) to the left-hand side, and using
	\(d_h\bfu_h^2=0\),
	\eqref{eq:mac-product-def},
	Lemma~\ref{lem:mac-sbp-identities},
	Lemma~\ref{lem:mac-discrete-estimates} with \(p=6\),
	\eqref{eq:mass-conservation},
	Lemma~\ref{lem:base-consequences},
	Assumption~\ref{ass:start-up},
	and Young's inequality, we obtain
	\begin{align*}
		 & \tfrac12\norm{\mu_h^2}_h^2+\tfrac{2\eps^2\dt}{3}
		\norm{D_{2\tau}\phi_h^2}_h^2                                                                            \\
		 & \quad \le C+\tfrac{2\eps^2\dt}{3}
		\left|(d_h(\phi_h^{*,2}\bfu_h^2),D_{2\tau}\phi_h^2)_h\right|
		+\tfrac{\eps^2}{3}\left|(A_h(\phi_h^1-\phi_h^0),\mu_h^2)_h\right| +\left|(G_h^2-G_h^1,\mu_h^2)_h\right| \\
		 & \quad \le C+C\dt\norm{\bfu_h^2}_{\ell^4}
		\norm{D_h\phi_h^{*,2}}_{\ell^4}\norm{D_{2\tau}\phi_h^2}_h
		+C\norm{A_h(\phi_h^1-\phi_h^0)}_h
		\norm{\mu_h^2}_h                                                                                        \\
		 & \quad \quad +\norm{\chi(\phi_h^{\dagger,2},\phi_h^{\dagger,1})-
			                \phi_h^{*,2}}_h
		\norm{\mu_h^2}_h
		+\norm{\mu_h^1-\eps^2A_h\phi_h^1}_h\norm{\mu_h^2}_h
		\displaybreak[0]                                                                                        \\
		 & \quad \le C+C\dt\norm{\bfu_h^2}_h^{1/2}\norm{D_h\bfu_h^2}_h^{1/2}
		\norm{D_h\phi_h^{*,2}}_h^{1/2}\norm{A_h\phi_h^{*,2}}_h^{1/2}
		\norm{D_{2\tau}\phi_h^2}_h+C\dt\norm{\mu_h^2}_h                                                         \\
		 & \quad \quad +C\left(1\mathrel{}+\norm{\phi_h^{*,2}}_h\right)
		\norm{\mu_h^2}_h +C\left(\norm{\mu_h^1}_h+ \norm{A_h\phi_h^1}_h\right)\norm{\mu_h^2}_h                  \\
		 & \quad \le C+C\dt(1+\norm{D_h\bfu_h^2}_h)
		(1+\norm{A_h\phi_h^{*,2}}_h)
		\norm{D_{2\tau}\phi_h^2}_h+C(1+\dt)\norm{\mu_h^2}_h                                                     \\
		 & \quad \le C+C\dt(1+\norm{D_h\bfu_h^2}_h)
		\norm{D_{2\tau}\phi_h^2}_h+C(1+\dt)\norm{\mu_h^2}_h                                                     \\
		 & \quad \le C+C\dt(1+\norm{D_h\bfu_h^2}_h^2)
		+\tfrac{\eps^2\dt}{6}\norm{D_{2\tau}\phi_h^2}_h^2
		+\tfrac18\norm{\mu_h^2}_h^2                                                                             \\
		 & \quad \le C_T+\tfrac{\eps^2\dt}{6}\norm{D_{2\tau}\phi_h^2}_h^2
		+\tfrac18\norm{\mu_h^2}_h^2 .
	\end{align*}
	Here \(\norm{A_h\phi_h^{*,2}}_h\) is bounded by
	Assumption~\ref{ass:start-up}. The cubic defect is estimated by
	Lemmas~\ref{lem:chi-estimates} and~\ref{lem:base-consequences}, and
	Assumption~\ref{ass:start-up}. Moreover,
	\(\dt\norm{D_h\bfu_h^2}_h^2\le C_T\) by Lemma~\ref{lem:base-consequences}.
	Moving the last two terms to the left gives
	\[
		\norm{\mu_h^2}_h^2+
		\dt\norm{D_{2\tau}\phi_h^2}_h^2\le C_T .
	\]
	Also,
	\[
		\norm{\mu_h^{\dagger,2}}_h^2
		\le C(\norm{\mu_h^2}_h^2+\norm{\mu_h^1}_h^2)
		\le C_T,
	\]
	by the preceding bound and Assumption~\ref{ass:start-up}. Hence
	\begin{equation}\label{eq:first-bdf2-highnorm}
		\norm{\mu_h^2}_h^2+\norm{\mu_h^{\dagger,2}}_h^2
		+\dt\norm{D_{2\tau}\phi_h^2}_h^2\le C_T .
	\end{equation}
	Applying the shifted BDF2 combination to
	\(\mu_h^k=\eps^2A_h\phi_h^k+G_h^k\) gives, for \(n\ge2\),
	\begin{equation}\label{eq:shifted-chemical-diff}
		\mu_h^{\dagger,n+1}-\mu_h^{\dagger,n}
		=\eps^2\dt A_hD_{2\tau}\phi_h^{n+1}
		+\tfrac32(G_h^{n+1}-G_h^n)-\tfrac12(G_h^n-G_h^{n-1}).
	\end{equation}
	The same shifted combination applied to \eqref{eq:scheme-phi} yields
	\begin{equation}\label{eq:shifted-phase-combination}
		\tfrac32D_{2\tau}\phi_h^{n+1}-\tfrac12D_{2\tau}\phi_h^n
		+A_h\mu_h^{\dagger,n+1}
		+\tfrac32d_h(\phi_h^{*,n+1}\bfu_h^{n+1})
		-\tfrac12d_h(\phi_h^{*,n}\bfu_h^n)=0.
	\end{equation}
	Testing \eqref{eq:shifted-chemical-diff} by
	\(\mu_h^{\dagger,n+1}\), summation-by-parts, and substituting
	\eqref{eq:shifted-phase-combination} into the obtained formulation gives
	\begin{equation}\label{eq:mu-shifted-start}
		\begin{aligned}
			 & \tfrac12\left(\norm{\mu_h^{\dagger,n+1}}_h^2-
			\norm{\mu_h^{\dagger,n}}_h^2\right)
			+\tfrac{\eps^2\dt}{4}\left(\norm{D_{2\tau}\phi_h^{n+1}}_h^2-
			\norm{D_{2\tau}\phi_h^n}_h^2\right) +\eps^2\dt\norm{D_{2\tau}\phi_h^{n+1}}_h^2 \\
			\le
			 & -\eps^2\dt(\tfrac32d_h(\phi_h^{*,n+1}\bfu_h^{n+1})
			-\tfrac12d_h(\phi_h^{*,n}\bfu_h^n),D_{2\tau}\phi_h^{n+1})_h
			+(\tfrac32(G_h^{n+1}-G_h^n)-\tfrac12(G_h^n-G_h^{n-1}),
			\mu_h^{\dagger,n+1})_h,
		\end{aligned}
	\end{equation}
	where the following equation is utilized in the above derivation
	\[
		\begin{aligned}
			 & \left(\tfrac32D_{2\tau}\phi_h^{n+1}-\tfrac12D_{2\tau}\phi_h^n,
			D_{2\tau}\phi_h^{n+1}\right)_h                                    \\
			 & \quad=\tfrac14\left(\norm{D_{2\tau}\phi_h^{n+1}}_h^2-
			\norm{D_{2\tau}\phi_h^n}_h^2\right)
			+\tfrac14\norm{D_{2\tau}\phi_h^{n+1}-D_{2\tau}\phi_h^n}_h^2
			+\norm{D_{2\tau}\phi_h^{n+1}}_h^2.
		\end{aligned}
	\]

	For \(k=n,n+1\), summation by parts, H\"older's inequality, and
	Lemma~\ref{lem:mac-discrete-estimates} give
	\begin{equation}\label{eq:transport-local-bound}
		\begin{aligned}
			 & \left|(d_h(\phi_h^{*,k}\bfu_h^k),
			D_{2\tau}\phi_h^{n+1})_h\right|                                \\
			 & \quad\le C\norm{\bfu_h^k}_{\ell^4}
			\norm{D_h\phi_h^{*,k}}_{\ell^4}
			\norm{D_{2\tau}\phi_h^{n+1}}_h                                 \\
			 & \quad\le C\norm{\bfu_h^k}_h^{1/2}\norm{D_h\bfu_h^k}_h^{1/2}
			\norm{D_h\phi_h^{*,k}}_h^{1/2}
			\norm{A_h\phi_h^{*,k}}_h^{1/2}
			\norm{D_{2\tau}\phi_h^{n+1}}_h                                 \\
			 & \quad\le C\left(1+\norm{D_h\bfu_h^k}_h\right)
			\left(1+\norm{A_h\phi_h^{*,k}}_h\right)
			\norm{D_{2\tau}\phi_h^{n+1}}_h .
		\end{aligned}
	\end{equation}
	Here Lemma~\ref{lem:base-consequences} controls \(\norm{\bfu_h^k}_h\) and
	\(\norm{D_h\phi_h^{*,k}}_h\). In the \(p=4\) estimate,
	Lemma~\ref{lem:mac-discrete-estimates} is applied to
	\(\phi_h^{*,k}-\overline{\phi_h^{*,k}}\), since both \(D_h\) and \(A_h\)
	annihilate constants. For \(k\ge2\), \eqref{eq:scheme-mu} gives
	\begin{equation}\label{eq:Ahphi-by-mu}
		\eps^2\norm{A_h\phi_h^k}_h
		\le \norm{\mu_h^k}_h
		+\norm{\chi(\phi_h^{\dagger,k},\phi_h^{\dagger,k-1})-\phi_h^{*,k}}_h
		\le C\left(1+\norm{\mu_h^k}_h\right),
	\end{equation}
	where the last inequality follows from Lemma~\ref{lem:chi-estimates},
	Lemma~\ref{lem:mac-discrete-estimates} with \(p=6\),
	\eqref{eq:mass-conservation}, Lemma~\ref{lem:base-consequences}, and
	Assumption~\ref{ass:start-up}. Since \(\phi_h^{*,k}=2\phi_h^{k-1}-\phi_h^{k-2}\),
	\eqref{eq:Ahphi-by-mu} yields
	\[
		\norm{A_h\phi_h^{*,k}}_h^2
		\le C\left(1+\norm{\mu_h^{k-1}}_h^2+\norm{\mu_h^{k-2}}_h^2\right).
	\]
	For \(k=n,n+1\), \eqref{eq:shifted-bdf2-history} with \(q=\mu_h\) gives
	\begin{equation}\label{eq:Aphi-star-history}
		\norm{A_h\phi_h^{*,k}}_h^2
		\le C\left(1+
		\sum_{j=1}^{n}\left(\tfrac13\right)^{n-j}
		\norm{\mu_h^{\dagger,j}}_h^2\right),
	\end{equation}
	with the start-up contribution absorbed into \(C\). Consequently, the transport
	term in \eqref{eq:mu-shifted-start} satisfies
	\begin{equation}\label{eq:transport-highnorm-estimate}
		\begin{aligned}
			 & \eps^2\dt\left|\left(\tfrac32d_h(\phi_h^{*,n+1}\bfu_h^{n+1})
			-\tfrac12d_h(\phi_h^{*,n}\bfu_h^n),D_{2\tau}\phi_h^{n+1}\right)_h\right| \\
			 & \quad\le \tfrac{\eps^2\dt}{8}\norm{D_{2\tau}\phi_h^{n+1}}_h^2
			+C\dt\left(1+\norm{D_h\bfu_h^{n+1}}_h^2+\norm{D_h\bfu_h^n}_h^2\right) \left(1+\sum_{j=1}^{n}\left(\tfrac13\right)^{n-j}
			\norm{\mu_h^{\dagger,j}}_h^2\right).
		\end{aligned}
	\end{equation}

	Taking the cell average of
	\[
		\mu_h^{\dagger,n+1}=\eps^2A_h\phi_h^{\dagger,n+1}
		+\tfrac32G_h^{n+1}-\tfrac12G_h^n
	\]
	and using \((A_h\phi_h^{\dagger,n+1},1)_h=0\), Lemma~\ref{lem:chi-estimates},
	\eqref{eq:mass-conservation}, and Lemma~\ref{lem:base-consequences}, we obtain
	\[
		\begin{aligned}
			\left|\overline{\mu_h^{\dagger,n+1}}\right|
			 & \le C\Bigl(\norm{\chi(\phi_h^{\dagger,n+1},\phi_h^{\dagger,n})}_h
			+\norm{\chi(\phi_h^{\dagger,n},\phi_h^{\dagger,n-1})}_h              \\
			 & \qquad\qquad
			+\norm{\phi_h^{*,n+1}}_h+\norm{\phi_h^{*,n}}_h\Bigr)\le C.
		\end{aligned}
	\]
	Thus, by Lemma~\ref{lem:mac-discrete-estimates} and the triangle inequality,
	\begin{equation}\label{eq:mudagger-l6-bound}
		\norm{\mu_h^{\dagger,n+1}}_{\ell^6}
		\le \norm{\mu_h^{\dagger,n+1}-\overline{\mu_h^{\dagger,n+1}}}_{\ell^6}
		+C\left|\overline{\mu_h^{\dagger,n+1}}\right|
		\le C\left(1+\norm{D_h\mu_h^{\dagger,n+1}}_h\right).
	\end{equation}

	For \(k\ge2\), Lemma~\ref{lem:chi-estimates}, Lemma~\ref{lem:mac-discrete-estimates}
	with \(p=6\), \eqref{eq:mass-conservation}, and Lemma~\ref{lem:base-consequences} give
	\begin{equation}\label{eq:cubic-increment-bound}
		\begin{aligned}
			 & \left|\left(\chi(\phi_h^{\dagger,k+1},\phi_h^{\dagger,k})
			-\chi(\phi_h^{\dagger,k},\phi_h^{\dagger,k-1}),
			\mu_h^{\dagger,n+1}\right)_h\right|                             \\
			 & \quad\le C\norm{\phi_h^{\dagger,k+1}-\phi_h^{\dagger,k-1}}_h
			\norm{\mu_h^{\dagger,n+1}}_{\ell^6}                             \\
			 & \quad\le C\dt\left(\norm{D_{2\tau}\phi_h^{k+1}}_h
			+\norm{D_{2\tau}\phi_h^k}_h\right)
			\norm{\mu_h^{\dagger,n+1}}_{\ell^6},
		\end{aligned}
	\end{equation}
	where \(\phi_h^{\dagger,k+1}-\phi_h^{\dagger,k-1}
	=\dt(D_{2\tau}\phi_h^{k+1}+D_{2\tau}\phi_h^k)\). For the explicit concave part,
	\(\phi_h^{*,k+1}-\phi_h^{*,k}=2(\phi_h^k-\phi_h^{k-1})-(\phi_h^{k-1}-\phi_h^{k-2})\). Solving
	\[
		\phi_h^m-\phi_h^{m-1}
		=\tfrac{2\dt}{3}D_{2\tau}\phi_h^m+\tfrac13(\phi_h^{m-1}-\phi_h^{m-2})
	\]
	gives, for \(m\ge2\),
	\[
		\phi_h^m-\phi_h^{m-1}
		=\tfrac{2\dt}{3}\sum_{j=2}^{m}\left(\tfrac13\right)^{m-j}D_{2\tau}\phi_h^j
		+\left(\tfrac13\right)^{m-1}(\phi_h^1-\phi_h^0).
	\]
	Assumption~\ref{ass:start-up} then implies
	\begin{equation}\label{eq:explicit-concave-increment}
		\norm{\phi_h^{*,k+1}-\phi_h^{*,k}}_h
		\le C\dt\left(1+
		\sum_{j=1}^{k+1}\left(\tfrac13\right)^{k+1-j}
		\norm{D_{2\tau}\phi_h^j}_h\right).
	\end{equation}

	We now estimate the whole \(G_h\)-increment term in \eqref{eq:mu-shifted-start}.
	For \(n\ge3\), using \(G_h^k=\chi(\phi_h^{\dagger,k},\phi_h^{\dagger,k-1})-
	\phi_h^{*,k}\), \eqref{eq:cubic-increment-bound},
	\eqref{eq:explicit-concave-increment}, and \eqref{eq:mudagger-l6-bound}, we have
	\begin{align}
		 & \left|\left(\tfrac32(G_h^{n+1}-G_h^n)-\tfrac12(G_h^n-G_h^{n-1}),
		\mu_h^{\dagger,n+1}\right)_h\right| \notag                             \\
		 & \quad\le C\left|\left(\chi(\phi_h^{\dagger,n+1},\phi_h^{\dagger,n})
		-\chi(\phi_h^{\dagger,n},\phi_h^{\dagger,n-1}),
		\mu_h^{\dagger,n+1}\right)_h\right| \notag                             \\
		 & \qquad+C\left|\left(\chi(\phi_h^{\dagger,n},\phi_h^{\dagger,n-1})
		-\chi(\phi_h^{\dagger,n-1},\phi_h^{\dagger,n-2}),
		\mu_h^{\dagger,n+1}\right)_h\right| \notag                             \\
		 & \qquad+C\left(\norm{\phi_h^{*,n+1}-\phi_h^{*,n}}_h
		+\norm{\phi_h^{*,n}-\phi_h^{*,n-1}}_h\right)
		\norm{\mu_h^{\dagger,n+1}}_{\ell^6} \notag                             \\
		 & \quad\le C\dt\left(\norm{D_{2\tau}\phi_h^{n+1}}_h+
		\norm{D_{2\tau}\phi_h^n}_h+\norm{D_{2\tau}\phi_h^{n-1}}_h\right)
		\left(1+\norm{D_h\mu_h^{\dagger,n+1}}_h\right) \notag                  \\
		 & \qquad+C\dt\left(1+\norm{D_h\mu_h^{\dagger,n+1}}_h\right)
		\left(1+\sum_{j=1}^{n+1}\left(\tfrac13\right)^{n+1-j}
		\norm{D_{2\tau}\phi_h^j}_h\right) \notag                               \\
		 & \quad\le \tfrac{\eps^2\dt}{8}\left(\norm{D_{2\tau}\phi_h^{n+1}}_h^2
		+\norm{D_{2\tau}\phi_h^n}_h^2+\norm{D_{2\tau}\phi_h^{n-1}}_h^2\right)  \\
		 & \qquad+\gamma\dt\sum_{j=1}^{n+1}\left(\tfrac13\right)^{n+1-j}
		\norm{D_{2\tau}\phi_h^j}_h^2
		+C_\gamma\dt\left(1+\norm{D_h\mu_h^{\dagger,n+1}}_h^2
		+\norm{D_h\mu_h^{\dagger,n}}_h^2\right),
		\label{eq:nonlinear-highnorm-estimate}
	\end{align}
	In the last step of \eqref{eq:nonlinear-highnorm-estimate}, we used Young's
	inequality and Cauchy--Schwarz; the term
	\(C_\gamma\dt\norm{D_h\mu_h^{\dagger,n}}_h^2\) has been added for the
	subsequent summation.

	The preceding estimate applies for \(n\ge3\). We next verify the starting case
	\(n=2\). We first expand the transport contribution. For \(k=2,3\),
	\eqref{eq:mac-product-def}, Lemma~\ref{lem:mac-sbp-identities},
	\(d_h\bfu_h^k=0\), H\"older's inequality, and
	Lemma~\ref{lem:mac-discrete-estimates} give
	\[
		\begin{aligned}
			 & \eps^2\dt\left|\left(\tfrac32d_h(\phi_h^{*,3}\bfu_h^3)
			-\tfrac12d_h(\phi_h^{*,2}\bfu_h^2),D_{2\tau}\phi_h^3\right)_h\right|     \\
			 & \quad\le C\dt\sum_{k=2}^{3}
			\norm{\bfu_h^k}_{\ell^4}\norm{D_h\phi_h^{*,k}}_{\ell^4}
			\norm{D_{2\tau}\phi_h^3}_h                                               \\
			 & \quad\le C\dt\sum_{k=2}^{3}
			\norm{\bfu_h^k}_h^{1/2}\norm{D_h\bfu_h^k}_h^{1/2}
			\norm{D_h\phi_h^{*,k}}_h^{1/2}\norm{A_h\phi_h^{*,k}}_h^{1/2}
			\norm{D_{2\tau}\phi_h^3}_h                                               \\
			 & \quad\le C\dt\left(1+\norm{D_h\bfu_h^3}_h+\norm{D_h\bfu_h^2}_h\right)
			\left(1+\norm{A_h\phi_h^{*,3}}_h+\norm{A_h\phi_h^{*,2}}_h\right)
			\norm{D_{2\tau}\phi_h^3}_h                                               \\
			 & \quad\le \tfrac{\eps^2\dt}{16}\norm{D_{2\tau}\phi_h^3}_h^2
			+C\dt\left(1+\norm{D_h\bfu_h^3}_h^2+\norm{D_h\bfu_h^2}_h^2\right)
			\left(1+\norm{A_h\phi_h^{*,3}}_h^2+\norm{A_h\phi_h^{*,2}}_h^2\right).
		\end{aligned}
	\]
	Here, we have utilized \eqref{eq:Ahphi-by-mu},
	\eqref{eq:first-bdf2-highnorm}, and Assumption~\ref{ass:start-up}.
	For the \(G_h\)-term, we separate the regular increment \(G_h^3-G_h^2\) from
	the start-up increment \(G_h^2-G_h^1\):
	\[
		\begin{aligned}
			 & \left|\left(\tfrac32(G_h^3-G_h^2)-\tfrac12(G_h^2-G_h^1),
			\mu_h^{\dagger,3}\right)_h\right|                                    \\
			 & \quad\le C\left|\left(\chi(\phi_h^{\dagger,3},\phi_h^{\dagger,2})
			-\chi(\phi_h^{\dagger,2},\phi_h^{\dagger,1}),
			\mu_h^{\dagger,3}\right)_h\right|                                    \\
			 & \qquad +C\left|\left(\phi_h^{*,3}-\phi_h^{*,2},
			\mu_h^{\dagger,3}\right)_h\right|
			+C\left(\norm{G_h^2}_h+\norm{G_h^1}_h\right)
			\norm{\mu_h^{\dagger,3}}_h                                           \\
			 & \quad\le C\dt\left(1+\norm{D_h\mu_h^{\dagger,3}}_h\right)
			\left(1+\sum_{j=1}^{3}\left(\tfrac13\right)^{3-j}
			\norm{D_{2\tau}\phi_h^j}_h\right)
			+C_T\norm{\mu_h^{\dagger,3}}_h .
		\end{aligned}
	\]
	Here the last line uses \eqref{eq:cubic-increment-bound} and
	\eqref{eq:explicit-concave-increment} with \(k=2\),
	\eqref{eq:mudagger-l6-bound}, and the first-step bound
	\(\norm{G_h^2}_h+\norm{G_h^1}_h\le C_T\). Combining the two estimates and
	applying Young's inequality gives
	\[
		\begin{aligned}
			 & \eps^2\dt\left|\left(\tfrac32d_h(\phi_h^{*,3}\bfu_h^3)
			-\tfrac12d_h(\phi_h^{*,2}\bfu_h^2),D_{2\tau}\phi_h^3\right)_h\right| \\
			 & \quad+\left|\left(\tfrac32(G_h^3-G_h^2)-\tfrac12(G_h^2-G_h^1),
			\mu_h^{\dagger,3}\right)_h\right|                                    \\
			 & \quad\le \tfrac{\eps^2\dt}{8}\norm{D_{2\tau}\phi_h^3}_h^2
			+C\dt\left(1+\norm{D_h\bfu_h^3}_h^2+\norm{D_h\bfu_h^2}_h^2\right)
			\left(1+\norm{A_h\phi_h^{*,3}}_h^2+\norm{A_h\phi_h^{*,2}}_h^2\right) \\
			 & \qquad +\gamma\dt\sum_{j=1}^{3}\left(\tfrac13\right)^{3-j}
			\norm{D_{2\tau}\phi_h^j}_h^2
			+C_\gamma\dt\left(1+\norm{D_h\mu_h^{\dagger,3}}_h^2\right)
			+C_T\norm{\mu_h^{\dagger,3}}_h .
		\end{aligned}
	\]
	Choosing \(\gamma>0\) sufficiently small and using
	\eqref{eq:first-bdf2-highnorm}, Assumption~\ref{ass:start-up},
	Lemma~\ref{lem:base-consequences}, and
	\[
		D_h\mu_h^{\dagger,3}=\tfrac32D_h\mu_h^3-\tfrac12D_h\mu_h^2,
		\quad \dt\norm{D_h\mu_h^{\dagger,3}}_h^2\le C_T,
	\]
	Since \(\phi_h^{*,3}=2\phi_h^2-\phi_h^1\), the linearity of \(A_h\),
	\eqref{eq:Ahphi-by-mu}, \eqref{eq:first-bdf2-highnorm}, and
	Assumption~\ref{ass:start-up} give
	\[
		\norm{A_h\phi_h^{*,3}}_h
		\le
		2\norm{A_h\phi_h^2}_h+\norm{A_h\phi_h^1}_h
		\le
		C\bigl(1+\norm{\mu_h^2}_h\bigr)+C
		\le C_T .
	\]
	Consequently, we obtain
	\[
		\begin{aligned}
			 & \eps^2\dt\left|\left(\tfrac32d_h(\phi_h^{*,3}\bfu_h^3)
			-\tfrac12d_h(\phi_h^{*,2}\bfu_h^2),D_{2\tau}\phi_h^3\right)_h\right| \\
			 & \quad+\left|\left(\tfrac32(G_h^3-G_h^2)-\tfrac12(G_h^2-G_h^1),
			\mu_h^{\dagger,3}\right)_h\right|
			\le \tfrac{\eps^2\dt}{4}\norm{D_{2\tau}\phi_h^3}_h^2
			+\tfrac18\norm{\mu_h^{\dagger,3}}_h^2+C_T .
		\end{aligned}
	\]
	Substituting this estimate into \eqref{eq:mu-shifted-start} with \(n=2\) gives
	\begin{equation}\label{eq:first-shifted-highnorm}
		\norm{\mu_h^{\dagger,3}}_h^2
		+\dt\norm{D_{2\tau}\phi_h^3}_h^2
		\le C_T .
	\end{equation}
	We now sum \eqref{eq:mu-shifted-start}. For \(4\le \ell\le N\), insert
	\eqref{eq:transport-highnorm-estimate} and
	\eqref{eq:nonlinear-highnorm-estimate} and sum over
	\(3\le n\le \ell-1\). The term
	\((\eps^2\dt/4)(\norm{D_{2\tau}\phi_h^{n+1}}_h^2-
	\norm{D_{2\tau}\phi_h^n}_h^2)\) telescopes, and the starting levels are
	controlled by \eqref{eq:first-bdf2-highnorm} and
	\eqref{eq:first-shifted-highnorm}. Also,
	\[
		\begin{aligned}
			 & \dt\sum_{n=3}^{\ell-1}\sum_{j=1}^{n+1}
			\left(\tfrac13\right)^{n+1-j}\norm{D_{2\tau}\phi_h^j}_h^2
			+\dt\sum_{n=3}^{\ell-1}\Bigl(
			\norm{D_{2\tau}\phi_h^{n+1}}_h^2
			+\norm{D_{2\tau}\phi_h^n}_h^2
			+\norm{D_{2\tau}\phi_h^{n-1}}_h^2\Bigr)                          \\
			 & \qquad\le C\dt\sum_{j=1}^{\ell}\norm{D_{2\tau}\phi_h^j}_h^2 .
		\end{aligned}
	\]
	Choosing \(\gamma>0\) sufficiently small, these contributions are absorbed
	into the left-hand side. The remaining terms containing
	\(D_h\mu_h^{\dagger,n}\) are absorbed into \(C_T\), since
	\[
		D_h\mu_h^{\dagger,n}=\tfrac32D_h\mu_h^n-\tfrac12D_h\mu_h^{n-1},
		\quad
		\dt\sum_{n=2}^{N}\norm{D_h\mu_h^{\dagger,n}}_h^2\le C_T
	\]
	by Lemma~\ref{lem:base-consequences}. Hence, after enlarging \(C_T\) to cover
	\(\ell\le3\),
	\begin{equation}\label{eq:weighted-phase-recurrence}
		\begin{aligned}
			 & \norm{\mu_h^{\dagger,\ell}}_h^2
			+\dt\sum_{n=1}^{\ell}\norm{D_{2\tau}\phi_h^n}_h^2 \\
			 & \quad\le C_T+C\dt\sum_{n=1}^{\ell-1}
			\left(1+\norm{D_h\bfu_h^n}_h^2+\norm{D_h\bfu_h^{n+1}}_h^2\right)
			\sum_{j=1}^{n}\left(\tfrac13\right)^{n-j}
			\norm{\mu_h^{\dagger,j}}_h^2 .
		\end{aligned}
	\end{equation}
	The coefficient multiplying the history has bounded time sum by
	Lemma~\ref{lem:base-consequences}. Lemma~\ref{lem:weighted-gronwall} with
	\(\alpha=1/3\) gives
	\[
		\max_{1\le n\le N}\norm{\mu_h^{\dagger,n}}_h^2
		+\dt\sum_{n=1}^{N}\norm{D_{2\tau}\phi_h^n}_h^2\le C_T .
	\]
	Applying \eqref{eq:shifted-bdf2-history} with \(q=\mu_h\) and using the
	start-up bound yields
	\begin{equation}\label{eq:mu-and-dtphi-bound}
		\max_{1\le n\le N}\norm{\mu_h^n}_h^2
		+\dt\sum_{n=1}^{N}\norm{D_{2\tau}\phi_h^n}_h^2\le C_T .
	\end{equation}
	Then \eqref{eq:Ahphi-by-mu} gives
	\(\max_n\norm{A_h\phi_h^n}_h^2\le C_T\), with the levels \(0,1\) covered by
	Assumption~\ref{ass:start-up}. Combining this with \eqref{eq:mass-conservation}
	and Lemma~\ref{lem:mac-discrete-estimates} gives
	\(\max_n\norm{\phi_h^n}_{\ell^\infty}^2\le C_T\).

	It remains to bound \(D_{2\tau}\phi_h^n\) in \(H_h^{-1}\). From the phase
	equation, Lemma~\ref{lem:mac-sbp-identities}, and the definition of
	\(H_h^{-1}\),
	\begin{equation}\label{eq:dtphi-hminus-one}
		\norm{D_{2\tau}\phi_h^{n+1}}_{H_h^{-1}}
		\le \norm{D_h\mu_h^{n+1}}_h
		+C\norm{\phi_h^{*,n+1}}_{\ell^4}\norm{\bfu_h^{n+1}}_{\ell^4} .
	\end{equation}
	The pointwise history formula preceding \eqref{eq:shifted-bdf2-history}, used
	in \(\ell^4\), and Lemma~\ref{lem:base-consequences} imply
	\(\max_n\norm{\phi_h^{*,n}}_{\ell^4}\le C\). Moreover,
	\[
		\norm{\bfu_h^{n+1}}_{\ell^4}^2
		\le C\norm{\bfu_h^{n+1}}_h\norm{D_h\bfu_h^{n+1}}_h
		\le C\left(1+\norm{D_h\bfu_h^{n+1}}_h^2\right).
	\]
	Squaring \eqref{eq:dtphi-hminus-one}, multiplying by \(\dt\), and summing in
	time, Lemma~\ref{lem:base-consequences} gives the bound for the levels
	\(2,\ldots,N\); the level \(n=1\) follows from Assumption~\ref{ass:start-up}.
	Therefore
	\[
		\dt\sum_{n=1}^{N}\norm{D_{2\tau}\phi_h^n}_{H_h^{-1}}^2\le C_T .
	\]
	This proves Lemma~\ref{lem:linfty-phase}.
\end{proof}

\subsection{Convergence}

We now state the fully discrete convergence result. Let
\begin{equation}\label{eq:error-defs}
	\begin{aligned}
		e_\phi^n & =I_c\phi(t_n)-\phi_h^n,
		         & e_\mu^n                   & =R_c\mu(t_n)-\mu_h^n, \\
		e_u^n    & =\Pi_u\bfu(t_n)-\bfu_h^n,
		         & e_p^n                     & =R_p p(t_n)-p_h^n.
	\end{aligned}
\end{equation}
We repeatedly use the approximation estimates of
Lemma~\ref{lem:mac-projections}.

\begin{lemma}\label{lem:consistency}
	Under Assumption~\ref{ass:smooth-solution}, let
	\(\Phi^n=I_c\phi(t_n)\), \(\mathcal M^n=R_c\mu(t_n)\), and
	\(U^n=\Pi_u\bfu(t_n)\). Define the residuals by
	\begin{align*}
		D_{2\tau} U^{n+1}+\mathcal B_h(U^{*,n+1},U^{n+1})
		+\nu A_hU^{n+1} +D_h(R_p p(t_{n+1})) +\Phi^{*,n+1}D_h\mathcal M^{n+1} & =\calR_u^{n+1},    \\
		D_{2\tau}\Phi^{n+1}+A_h\mathcal M^{n+1}
		+d_h(\Phi^{*,n+1}U^{n+1})                                             & =\calR_\phi^{n+1}, \\
		\mathcal M^{n+1}-\eps^2A_h\Phi^{n+1}
		-\chi(\Phi^{\dagger,n+1},\Phi^{\dagger,n})
		+\Phi^{*,n+1}                                                         & =\calR_\mu^{n+1}.
	\end{align*}
	For \(1\le n\le N-1\),
	\begin{equation*}
		\norm{\calR_u^{n+1}}_{H_h^{-1}}
		+\norm{\calR_\phi^{n+1}}_{H_h^{-1}}
		+\norm{\calR_\mu^{n+1}}_h
		+\norm{D_h\calR_\mu^{n+1}}_h
		\le C(\dt^2+h^2).
	\end{equation*}
	Hence
	\begin{equation*}
		\dt\sum_{n=1}^{N-1}\left(
		\norm{\calR_\phi^{n+1}}_{H_h^{-1}}^2+
		\norm{\calR_\mu^{n+1}}_h^2+
		\norm{D_h\calR_\mu^{n+1}}_h^2+
		\norm{\calR_u^{n+1}}_{H_h^{-1}}^2\right)
		\le C(\dt^4+h^4).
	\end{equation*}
\end{lemma}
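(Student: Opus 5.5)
The plan is to subtract the exact equations \eqref{eq:continuous-chns}, evaluated at \(t_{n+1}\) and restricted to the grid, from the defining relations of the residuals. Each residual then splits into a temporal truncation part and a spatial part, which are estimated separately.

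\emph{Temporal part.} Taylor expansion in time with integral remainders gives
\[
	D_{2\tau}g(t_{n+1})-\partial_tg(t_{n+1})=O(\dt^2\norm{g}_{W^{3,\infty}(0,T;X)}),
	\quad
	g^{*,n+1}-g(t_{n+1})=O(\dt^2\norm{g}_{W^{2,\infty}(0,T;X)}).
\]
Both \(\phi^{\dagger,n+1}\) and \(\phi^{\dagger,n}\) are second-order approximations of \(\phi(t_{n+1})\): we have \(\phi^{\dagger,n+1}=\phi(t_{n+1})+\tfrac{\dt}{2}\phi_t+O(\dt^2)\) and \(\phi^{\dagger,n}=\phi(t_{n+1})-\tfrac{\dt}{2}\phi_t+O(\dt^2)\), so the arguments of \(\chi\) are \(a=\phi+\eta\) and \(b=\phi-\eta\) with \(\eta=\tfrac{\dt}2\phi_t+O(\dt^2)\). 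Expanding \(\chi(a,b)=\tfrac14(a^2+b^2)(a+b)\) gives \(\chi(a,b)=\phi^3+\phi\eta^2+O(\dt^2)\), hence \(\chi(\phi^{\dagger,n+1},\phi^{\dagger,n})=\phi^3(t_{n+1})+O(\dt^2)\). The first-order terms cancel by the symmetric choice, and this cancellation is the point to check carefully. Together with \(\phi^{*,n+1}=\phi(t_{n+1})+O(\dt^2)\), the time part of \(\calR_\mu\) is \(O(\dt^2)\). The same expansion, applied pointwise to gradients, gives the \(D_h\) bound because \(\phi\in W^{2,\infty}(0,T;W^{5,\infty})\). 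For \(\calR_\phi\) and \(\calR_u\), the time errors in \(D_{2\tau}\), in \(\Phi^{*}\), and in \(U^{*}\) inside the bilinear terms are all \(O(\dt^2)\) in \(\ell^2\), and the \(\ell^2\) norm dominates the \(H_h^{-1}\) norm by Lemma~\ref{lem:mac-discrete-estimates}.

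\emph{Spatial part.} Since \(A_hI_c\phi=A_hR_c\phi\) and \(D_hI_c\phi=D_hR_c\phi\), Lemma~\ref{lem:mac-projections} handles \(\eps^2A_h\Phi-R_c(-\eps^2\Delta\phi)\) in \(\ell^2\) and in \(D_h\)-norm with \(O(h^2)\). The constant shift \(I_c-R_c\) is \(O(h^2)\) inside \(\chi\) and \(\Phi^*\), and it is annihilated by \(D_h\). The term \(A_h\mathcal M-R_c(-\Delta\mu)\) is handled with \(\mu\in W^{3,\infty}\). Because \(\mu\) has only \(W^{3,\infty}\) regularity rather than \(W^{4,\infty}\), this is measured in \(H_h^{-1}\): one writes it as \(d_h(D_hR_c\mu-R_e\nabla\mu)\) plus a truncation of \(R_c\Delta\mu\), then pairs with a test function and sums by parts. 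The transport term \(d_h(\Phi^*U)\) is compared with \(R_c(\nabla\cdot(\phi\bfu))=R_c(\bfu\cdot\nabla\phi)\), again in \(H_h^{-1}\) via summation by parts. Here the centered averages \(A_x,A_y\) are second-order accurate and \(\norm{U-R_e\bfu}_h\le Ch^2\).

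For \(\calR_u\), \(\nu A_hU\) uses \(\Pi_u\bfu=R_e\bfu+D_h\theta\) and the bound \(\norm{D_h(\Pi_u\bfu-R_e\bfu)}_h\le Ch\). This rate is suboptimal, so one should instead use \(A_hD_h\theta=D_h(A_h\theta)\), which is a gradient and is annihilated when tested against discretely divergence-free fields. Alternatively, this gradient piece is absorbed into the pressure residual by redefining it, and the \(H_h^{-1}\) bound \(\norm{D_h\theta}_h\le Ch^2\) suffices. The terms \(D_hR_pp\), \(\Phi^*D_h\mathcal M\) and \(\mathcal B_h(U^*,U)\) are compared with their continuous counterparts. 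For the convection term, the skew form \(\mathcal B_h\) applied to \(R_e\bfu\) is a second-order approximation of \((\bfu\cdot\nabla)\bfu+\tfrac12(\nabla\cdot\bfu)\bfu=(\bfu\cdot\nabla)\bfu\). The replacement \(U\to R_e\bfu\) is bounded in \(H_h^{-1}\) through the estimates of Lemma~\ref{lem:mac-convection-estimates}, using \(\ell^\infty\) and \(W^{1,\infty}\) bounds on \(R_e\bfu\) together with \(\norm{U-R_e\bfu}_h\le Ch^2\).

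\emph{Main obstacle.} The hardest step is the velocity residual: the \(O(h^2)\) consistency of \(\nu A_h\Pi_u\bfu\) and of the skew convection form in \(H_h^{-1}\), given that the projection correction is only \(O(h)\) in \(D_h\)-norm. The plan is to exploit the gradient structure of \(D_h\theta\) and the negative norm in order to gain the missing power of \(h\).

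Finally, summing the squared pointwise bounds over \(n\) with weight \(\dt\), and using \(N\dt=T\), gives the time-summed estimate.
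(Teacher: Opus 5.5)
Your route is the paper's: Taylor expansion at $t_{n+1}$ with the symmetric cancellation of the $O(\dt)$ terms in $\chi(\phi^{\dagger,n+1},\phi^{\dagger,n})$, the estimates of Lemma~\ref{lem:mac-projections}, summation by parts into $H_h^{-1}$, and the commutation $A_hD_h\theta_{\bfu}=D_hA_h\theta_{\bfu}$ for the correction $\Pi_u\bfu-R_e\bfu=D_h\theta_{\bfu}$. Your explicit $H_h^{-1}$ argument for $A_h\mathcal M^{n+1}$, which needs only $\mu\in W^{3,\infty}$, is a detail the paper leaves implicit.

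The gap is the step you call the main obstacle, the term $\nu A_hD_h\theta_{\bfu}$ in $\calR_u^{n+1}$. Neither of your two resolutions yields the stated estimate.
\begin{itemize}
\item The norm in \eqref{eq:hminusone-def} is a supremum over all $\bfw_h\in\mathsf E_h$, not only over discretely divergence-free fields. So the fact that $D_h(A_h\theta_{\bfu})$ is annihilated by $Z_h$ does not bound $\norm{\calR_u^{n+1}}_{H_h^{-1}}$. The full norm is really needed later, because in the pressure recovery every gradient component of $\calR_u^{n+1}$ goes straight into $e_p^{n+1}$.
\item Absorbing the gradient into the pressure changes the residual, which is defined with $D_h(R_pp(t_{n+1}))$. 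To get back to $R_pp$ you would need $\norm{\nu A_h\theta_{\bfu}}_h=O(h^2)$ anyway.
\item Your claim that ``$\norm{D_h\theta}_h\le Ch^2$ suffices'' targets the wrong quantity. $\norm{D_hq}_{H_h^{-1}}$ is controlled by $\norm{q}_h$ with $q=A_h\theta_{\bfu}$. From $\norm{D_h\theta_{\bfu}}_h\le Ch^2$, an inverse inequality gives only $\norm{A_h\theta_{\bfu}}_h\le Ch$, one power of $h$ short.
\end{itemize}

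The missing observation is that the discrete Poisson problem \eqref{eq:velocity-projection-poisson} determines $A_h\theta_{\bfu}$ exactly. By summation by parts, $A_h\theta_{\bfu}=\delta_h-\overline{\delta_h}$ with $\delta_h=d_hR_e\bfu-R_c(\nabla\cdot\bfu)$. Moreover $\overline{\delta_h}=0$: $\nabla\cdot\bfu=0$, and $(d_hR_e\bfu,1)_h$ telescopes to the boundary normal velocities, which vanish. Hence $\nu A_hD_h\theta_{\bfu}=\nu D_h\delta_h$, and for every $\bfw_h\in\mathsf E_h$
\[
	\left|(\nu D_h\delta_h,\bfw_h)_h\right|
	=\nu\left|(\delta_h,d_h\bfw_h)_h\right|
	\le C\nu\norm{\delta_h}_h\norm{D_h\bfw_h}_h
	\le Ch^2\norm{D_h\bfw_h}_h ,
\]
since $\delta_h$ is a centered-difference truncation error of size $O(h^2)$. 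This is the paper's \eqref{eq:weak-velocity-projection-consistency}.

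A smaller point of the same kind concerns the convection term. Lemma~\ref{lem:mac-convection-estimates} assumes a discretely divergence-free first argument, which neither $R_e\bfu^{*,n+1}$ nor the correction $U^{*,n+1}-R_e\bfu^{*,n+1}$ is. So for the replacement $U\to R_e\bfu$, split as
\[
	\mathcal B_h\bigl(U^{*,n+1},U^{n+1}-R_e\bfu(t_{n+1})\bigr)+\mathcal B_h\bigl(U^{*,n+1}-R_e\bfu^{*,n+1},R_e\bfu(t_{n+1})\bigr).
\]
Apply the lemma only to the first piece. Bound the second directly from the explicit form of $\mathcal B_h$, letting the smooth second argument carry the $\ell^\infty$ and $W^{1,\infty}$ bounds.
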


\begin{proof}
	For any smooth scalar or vector component \(q\), Taylor expansion at
	\(t_{n+1}\) gives
	Assumption~\ref{ass:smooth-solution},
	\begin{equation}\label{eq:time-local-defects}
		\norm{D_{2\tau}q(t_{n+1})-q_t(t_{n+1})}
		+\norm{q^{*,n+1}-q(t_{n+1})}
		\le C\dt^2 .
	\end{equation}
	in every norm required by Assumption~\ref{ass:smooth-solution}. Moreover,
	\begin{align*}
		q^{\dagger,n+1}
		          & =q(t_{n+1})+\tfrac{\dt}{2}q_t(t_{n+1})+\rho_+^{n+1}, \\
		q^{\dagger,n}
		          & =q(t_{n+1})-\tfrac{\dt}{2}q_t(t_{n+1})+\rho_-^{n+1}, \\
		q^{*,n+1} & =q(t_{n+1})+\rho_*^{n+1},
	\end{align*}
	where the remainders are \(O(\dt^2)\). Since \(\chi\) is symmetric, the $O(\dt)$ terms cancel and
	\begin{equation}\label{eq:shifted-nonlinear-consistency}
		\norm{\chi(q^{\dagger,n+1},q^{\dagger,n})-q^{*,n+1}
			-f(q(t_{n+1}))}
		\le C\dt^2,
	\end{equation}
	The same estimate holds after one spatial derivative.

	The scalar consistency estimates are given in
	Lemma~\ref{lem:mac-projections}. For the pressure gradient,
	\begin{equation}\label{eq:weak-pressure-consistency}
		\norm{D_h R_p q-R_e\nabla q}_{H_h^{-1}}
		\le Ch^2\norm{q}_{W^{2,\infty}},
		\qquad (q,1)_{L^2}=0 .
	\end{equation}
	For the velocity projection, write \(\Pi_u\bfv-R_e\bfv=D_h\theta_{\bfv}\) and
	\(\delta_h=d_hR_e\bfv-R_c(\nabla\cdot\bfv)\). Since \(A_h\theta_{\bfv}=\delta_h\) and the discrete difference operators
	commute,
	\[
		A_h(\Pi_u\bfv-R_e\bfv)
		=
		D_h\delta_h.
	\]
	Therefore,
	\begin{equation}\label{eq:weak-velocity-projection-consistency}
		\norm{A_h(\Pi_u\bfv-R_e\bfv)}_{H_h^{-1}}
		=\norm{D_h\delta_h}_{H_h^{-1}}
		\le \norm{\delta_h}_h
		\le Ch^2\norm{\bfv}_{W^{3,\infty}} .
	\end{equation}
	Moreover,
	\[
		\norm{D_h(\Pi_u\bfv-R_e\bfv)}_{\ell^\infty}
		\le Ch^{-1}\norm{D_h(\Pi_u\bfv-R_e\bfv)}_h
		\le C\norm{\bfv}_{W^{3,\infty}}.
	\]

	For the phase equation, \eqref{eq:time-local-defects},
	Lemma~\ref{lem:mac-projections}, and the product estimate give
	\begin{align*}
		 & \norm{D_{2\tau}\Phi^{n+1}-R_c\phi_t(t_{n+1})}_{H_h^{-1}}
		+\norm{A_h\mathcal M^{n+1}-R_c(-\Delta\mu(t_{n+1}))}_{H_h^{-1}} \\
		 & \quad
		+\norm{d_h(\Phi^{*,n+1}U^{n+1})
			 -R_c\nabla\cdot(\phi\bfu)(t_{n+1})}_{H_h^{-1}}
		\le C(\dt^2+h^2).
	\end{align*}
	Hence
	\(\norm{\calR_\phi^{n+1}}_{H_h^{-1}}\le C(\dt^2+h^2)\).

	For the momentum equation, \eqref{eq:time-local-defects}, the MAC convection
	consistency estimate, \eqref{eq:weak-pressure-consistency}, and
	\eqref{eq:weak-velocity-projection-consistency} give
	\begin{align*}
		 & \norm{D_{2\tau}U^{n+1}-R_e\bfu_t(t_{n+1})}_{H_h^{-1}}
		+\nu\norm{A_hU^{n+1}-R_e(-\Delta\bfu)(t_{n+1})}_{H_h^{-1}} \\
		 & \quad
		+\norm{\mathcal B_h(U^{*,n+1},U^{n+1})
			 -R_e((\bfu\cdot\nabla)\bfu)(t_{n+1})}_{H_h^{-1}}         \\
		 & \quad
		+\norm{\Phi^{*,n+1}D_h\mathcal M^{n+1}
			 -R_e(\phi\nabla\mu)(t_{n+1})}_{H_h^{-1}}
		+\norm{D_h R_p p(t_{n+1})-R_e\nabla p(t_{n+1})}_{H_h^{-1}} \\
		 & \le C(\dt^2+h^2).
	\end{align*}
	The continuous momentum equation gives
	\(\norm{\calR_u^{n+1}}_{H_h^{-1}}\le C(\dt^2+h^2)\).

	For the chemical potential, use
	\(\mu=-\eps^2\Delta\phi+f(\phi)\). Since \(I_c\phi-R_c\phi\) is a spatially
	constant of size \(O(h^2)\), the elliptic part is controlled by
	Lemma~\ref{lem:mac-projections}. The nonlinear defect is decomposed as
	\begin{align*}
		 & \chi(\Phi^{\dagger,n+1},\Phi^{\dagger,n})-
		\Phi^{*,n+1}-R_cf(\phi(t_{n+1}))                          \\
		 & \quad=\Bigl[\chi(\Phi^{\dagger,n+1},\Phi^{\dagger,n})-
			         \Phi^{*,n+1}
			         -R_c\{\chi(\phi^{\dagger,n+1},\phi^{\dagger,n})-
			         \phi^{*,n+1}\}\Bigr] \\
		 & \qquad
		+R_c\{\chi(\phi^{\dagger,n+1},\phi^{\dagger,n})-
		\phi^{*,n+1}-f(\phi(t_{n+1}))\} .
	\end{align*}
	The first bracket is bounded by \(Ch^2\) in both
	\(\ell^2\) and the discrete gradient norm, while the second is bounded by
	\(C\dt^2\) by \eqref{eq:shifted-nonlinear-consistency}. Hence
	\begin{equation*}
		\norm{\calR_\mu^{n+1}}_h+
		\norm{D_h\calR_\mu^{n+1}}_h
		\le C(\dt^2+h^2).
	\end{equation*}
	Multiplying by \(\dt\), summing over \(n\), and using \(N\dt=T\) gives the
	summed residual bound in Lemma~\ref{lem:consistency}
\end{proof}

Let \(\Phi^n=I_c\phi(t_n)\), \(\mathcal M^n=R_c\mu(t_n)\), and
\(U^n=\Pi_u\bfu(t_n)\). Subtracting
\eqref{eq:scheme} from the consistency equations in Lemma~\ref{lem:consistency} gives
\begin{subequations}\label{eq:error-system}
	\begin{align}
		D_{2\tau} e_u^{n+1}+\nu A_he_u^{n+1}+D_he_p^{n+1}
		             & =\calN_u^{n+1}+\calC_u^{n+1}+\calR_u^{n+1}, \label{eq:error-u} \\
		d_he_u^{n+1} & =0,\label{eq:error-div}                                        \\
		D_{2\tau} e_\phi^{n+1}+A_he_\mu^{n+1}
		             & =\calC_\phi^{n+1}+\calR_\phi^{n+1}, \label{eq:error-phi}       \\
		e_\mu^{n+1}-\eps^2A_he_\phi^{n+1}
		-\left(\tfrac12(e_\phi^{\dagger,n+1}+e_\phi^{\dagger,n})
		-e_\phi^{*,n+1}\right)
		             & =\calN_\phi^{n+1}+\calR_\mu^{n+1}. \label{eq:error-mu}
	\end{align}
\end{subequations}
Here
\begin{align}
	\calN_u^{n+1}
	 & =-\mathcal B_h(U^{*,n+1},U^{n+1})
	+\mathcal B_h(\bfu_h^{*,n+1},\bfu_h^{n+1}),\label{eq:def-Nu}           \\
	\calN_\phi^{n+1}
	 & =\chi(\Phi^{\dagger,n+1},\Phi^{\dagger,n})
	-\chi(\phi_h^{\dagger,n+1},\phi_h^{\dagger,n})
	-\tfrac12(e_\phi^{\dagger,n+1}+e_\phi^{\dagger,n}),\label{eq:def-Nphi} \\
	\calC_u^{n+1}
	 & =-\Phi^{*,n+1}D_h\mathcal M^{n+1}
	+\phi_h^{*,n+1}D_h\mu_h^{n+1},\label{eq:def-Cu}                        \\
	\calC_\phi^{n+1}
	 & =-d_h\left(\Phi^{*,n+1}U^{n+1}
	-\phi_h^{*,n+1}\bfu_h^{n+1}\right).\label{eq:def-Cphi}
\end{align}

\begin{lemma}\label{lem:nonlinear-estimates}
	Under Assumptions~\ref{ass:start-up} and~\ref{ass:smooth-solution}, there is a
	constant \(C_T\), independent of \(h\) and \(\dt\), such that, for
	\(n\ge1\),
	\begin{equation*}
		\begin{aligned}
			\left|(\calN_u^{n+1},e_u^{n+1})_h\right|
			 & \le \tfrac{\nu}{8}\norm{D_he_u^{n+1}}_h^2 +C_T\left(\norm{e_u^{n+1}}_h^2+\norm{e_u^n}_h^2
			+\norm{e_u^{n-1}}_h^2\right)+C_T(\dt^4+h^4),
		\end{aligned}
	\end{equation*}
	\begin{equation*}
		\begin{aligned}
			\left|(\calN_\phi^{n+1},D_{2\tau} e_\phi^{n+1})_h\right|
			 & \le \tfrac18\norm{D_he_\mu^{n+1}}_h^2  +C_T\left(\norm{D_he_\phi^{n+1}}_h^2
			+\norm{D_he_\phi^n}_h^2+\norm{D_he_\phi^{n-1}}_h^2
			+\norm{e_u^{n+1}}_h^2\right)                                                   \\
			 & \quad+C_T(\dt^4+h^4),
		\end{aligned}
	\end{equation*}
	\begin{equation*}
		\begin{aligned}
			 & \left|(\calC_u^{n+1},e_u^{n+1})_h
			+(\calC_\phi^{n+1},e_\mu^{n+1})_h\right| \le \tfrac{\nu}{8}\norm{D_he_u^{n+1}}_h^2
			+\tfrac18\norm{D_he_\mu^{n+1}}_h^2                       \\
			 & \qquad+C_T\left(\norm{e_u^{n+1}}_h^2+\norm{e_u^n}_h^2
			+\norm{e_u^{n-1}}_h^2\right) +C_T\left(\norm{D_he_\phi^n}_h^2 +\norm{D_he_\phi^{n-1}}_h^2\right)+C_T(\dt^4+h^4).
		\end{aligned}
	\end{equation*}
\end{lemma}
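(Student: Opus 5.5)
The plan is to treat the three estimates separately. In each case I would split the nonlinear differences into a part carrying an error factor and a part that cancels by skew-symmetry or by summation by parts. The uniform bounds on the discrete solution come from Lemmas~\ref{lem:base-consequences} and~\ref{lem:linfty-phase}, and those on the projected exact solution from Assumption~\ref{ass:smooth-solution} and Lemma~\ref{lem:mac-projections}. Throughout, \(e^{*,n+1}=2e^n-e^{n-1}\) and \(e^{\dagger,k}\) are the corresponding combinations of errors.

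\emph{Convection.} I would write
\[
\calN_u^{n+1}=-\mathcal B_h(e_u^{*,n+1},U^{n+1})-\mathcal B_h(\bfu_h^{*,n+1},e_u^{n+1}),
\]
which uses bilinearity of \(\mathcal B_h\) and \(U^{*}=e_u^{*}+\bfu_h^{*}\). Both \(U\) (by Lemma~\ref{lem:mac-projections}) and \(\bfu_h\) are discretely divergence-free, so \(d_h\bfu_h^{*,n+1}=0\) and \(d_he_u^{*,n+1}=0\). Lemma~\ref{lem:mac-convection-form} then kills \(b_h(\bfu_h^{*,n+1},e_u^{n+1},e_u^{n+1})\). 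For the remaining term I would use the third bound of Lemma~\ref{lem:mac-convection-estimates}:
\[
|b_h(e_u^{*},U,e_u)|\le C\bigl(\norm{U}_{\ell^\infty}+\norm{D_hU}_{\ell^\infty}\bigr)\norm{e_u^{*}}_h\norm{D_he_u^{n+1}}_h .
\]
Here \(\norm{D_hU}_{\ell^\infty}\le C\) by the inverse estimate shown after \eqref{eq:weak-velocity-projection-consistency}. Young's inequality then gives \(\tfrac{\nu}{8}\norm{D_he_u^{n+1}}_h^2+C_T(\norm{e_u^n}_h^2+\norm{e_u^{n-1}}_h^2)\).

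\emph{Coupling terms.} Writing \(\Phi^*=e_\phi^*+\phi_h^*\), \(\mathcal M=e_\mu+\mu_h\), \(U=e_u+\bfu_h\), I would expand
\[
\calC_u^{n+1}=-e_\phi^{*,n+1}D_h\mathcal M^{n+1}-\phi_h^{*,n+1}D_he_\mu^{n+1},\qquad
\calC_\phi^{n+1}=-d_h\bigl(e_\phi^{*,n+1}U^{n+1}+\phi_h^{*,n+1}e_u^{n+1}\bigr).
\]
In the sum, the two \(\phi_h^{*,n+1}\) terms cancel exactly by the third identity in Lemma~\ref{lem:mac-sbp-identities} and the product definition \eqref{eq:mac-product-def}; this is the same cancellation as in the energy proof. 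The remainder is
\[
-(e_\phi^{*}D_h\mathcal M,e_u)_h+(e_\phi^{*}U,D_he_\mu)_h ,
\]
which I bound by \(\bigl(\norm{D_h\mathcal M}_{\ell^\infty}\norm{e_u^{n+1}}_h+\norm{U}_{\ell^\infty}\norm{D_he_\mu^{n+1}}_h\bigr)\norm{e_\phi^{*}}_h\). The factor \(e_\phi^{*,n+1}\) has zero mean because of mass conservation \eqref{eq:mass-conservation}, the definition of \(I_c\), and Assumption~\ref{ass:start-up}. Poincaré therefore gives \(\norm{e_\phi^*}_h\le C(\norm{D_he_\phi^n}_h+\norm{D_he_\phi^{n-1}}_h)\), and Young's inequality yields the third estimate.

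\emph{Bulk term (the main obstacle).} \(D_{2\tau}e_\phi^{n+1}\) is not controlled in \(\ell^2\), so I would eliminate it using \eqref{eq:error-phi}: \(D_{2\tau}e_\phi^{n+1}=-A_he_\mu^{n+1}+\calC_\phi^{n+1}+\calR_\phi^{n+1}\). Summation by parts then gives
\[
(\calN_\phi,D_{2\tau}e_\phi)_h=-(D_h\calN_\phi,D_he_\mu)_h+\bigl(e_\phi^{*}U+\phi_h^{*}e_u,D_h\calN_\phi\bigr)_h+(\calN_\phi,\calR_\phi)_h .
\]
\begin{itemize}
\item For \(\norm{D_h\calN_\phi^{n+1}}_h\) I would apply the gradient estimate of Lemma~\ref{lem:chi-estimates} with \(u=\Phi^{\dagger,n+1}\), \(v=\Phi^{\dagger,n}\), \(w=\phi_h^{\dagger,n+1}\), \(r=\phi_h^{\dagger,n}\). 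The differences \(e_\phi^{\dagger,k}\) have zero mean, as above.
\item \(K_\infty\) is bounded by Lemma~\ref{lem:linfty-phase}. \(K_4\) is bounded through \(\norm{D_h\phi_h}_{\ell^4}\le C\norm{D_h\phi_h}_h^{1/2}\norm{A_h\phi_h}_h^{1/2}\) from Lemma~\ref{lem:mac-discrete-estimates}, again using Lemma~\ref{lem:linfty-phase}.
\item Adding the trivial linear part, this gives \(\norm{D_h\calN_\phi^{n+1}}_h\le C_T(\norm{D_he_\phi^{n+1}}_h+\norm{D_he_\phi^n}_h+\norm{D_he_\phi^{n-1}}_h)\).
\item The middle term is bounded by \(\bigl(\norm{U}_{\ell^\infty}\norm{e_\phi^*}_h+\norm{\phi_h^*}_{\ell^\infty}\norm{e_u^{n+1}}_h\bigr)\norm{D_h\calN_\phi}_h\). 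This is the source of the \(\norm{e_u^{n+1}}_h^2\) term.
\item The residual pairing is \(\le\norm{D_h\calN_\phi}_h\norm{\calR_\phi}_{H_h^{-1}}\). This needs a mean-zero test function, so I would subtract the mean of \(\calN_\phi\). The correction \(\overline{\calN_\phi}\,(\calR_\phi,1)_h\) should vanish or be \(O(\dt^2+h^2)\) times \(\norm{\calN_\phi}_h\); I would check this from the mass compatibility of \(\Phi\).
\end{itemize}
Young's inequality with Lemma~\ref{lem:consistency} finishes the second estimate. I expect the delicate points to be the zero-mean bookkeeping and the start-up level \(n=1\), where the \(\dagger\)-states involve \(\phi_h^0\) and Assumption~\ref{ass:start-up} must supply the bounds.
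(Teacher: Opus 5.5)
Your proposal is correct. For the convection and bulk terms it follows the paper. You make the same reduction of $(\calN_u^{n+1},e_u^{n+1})_h$ to $-b_h(e_u^{*,n+1},U^{n+1},e_u^{n+1})$. For $\calN_\phi^{n+1}$ you use the same pairing of the gradient estimate of Lemma~\ref{lem:chi-estimates} with the $H_h^{-1}$ control of $D_{2\tau}e_\phi^{n+1}$ from \eqref{eq:error-phi}. The paper bounds $\norm{D_{2\tau}e_\phi^{n+1}}_{H_h^{-1}}$ by duality; you substitute the equation and sum by parts, which is the same computation.

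There is one small variation in the convection term. The paper asserts $|b_h(e_u^{*,n+1},U^{n+1},e_u^{n+1})|\le C_T\norm{e_u^{*,n+1}}_h\norm{e_u^{n+1}}_h$. You instead use the dual bound of Lemma~\ref{lem:mac-convection-estimates} with $\norm{\bfv_h}_{\ell^\infty}+\norm{D_h\bfv_h}_{\ell^\infty}$. That is what the stated lemmas actually give, and it costs only the $\tfrac{\nu}{8}\norm{D_he_u^{n+1}}_h^2$ the statement already allows.

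The coupling term is where you genuinely differ. The paper expands around the exact coefficient $\Phi^{*,n+1}$ and cancels the $\Phi^{*,n+1}$ pair by summation by parts. It is left with four terms, including the quadratic pieces $(e_\phi^{*,n+1}D_he_\mu^{n+1},e_u^{n+1})_h$ and $-(e_\phi^{*,n+1}e_u^{n+1},D_he_\mu^{n+1})_h$. It bounds these separately using $\ell^\infty$ control of $e_\phi^{*,n+1}$, although by the same identity they actually cancel.

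You expand around the discrete coefficient $\phi_h^{*,n+1}$, so no quadratic error terms appear at all. One application of the transport--capillary cancellation removes the two terms carrying $\phi_h^{*,n+1}$. What remains is $-(e_\phi^{*,n+1}D_h\mathcal M^{n+1},e_u^{n+1})_h+(e_\phi^{*,n+1}U^{n+1},D_he_\mu^{n+1})_h$, which has smooth coefficients only. This needs no a priori bound on $\phi_h$ for this estimate. It also gives a sharper right-hand side, with no $\nu\norm{D_he_u^{n+1}}_h^2$, $\norm{e_u^n}_h^2$ or $\norm{e_u^{n-1}}_h^2$ terms, and so implies the stated bound.

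The point you left open in the residual pairing does close. We have $(\calR_\phi^{n+1},1)_h=0$: $I_c$ keeps the discrete mean fixed in time, so $(D_{2\tau}\Phi^{n+1},1)_h=0$, and the $A_h$ and $d_h$ terms have zero mean by summation by parts. Hence $(\calN_\phi^{n+1},\calR_\phi^{n+1})_h\le\norm{D_h\calN_\phi^{n+1}}_h\norm{\calR_\phi^{n+1}}_{H_h^{-1}}$ with no mean correction.
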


\begin{proof}
	The phase error satisfies
	\[
		(e_\phi^j,1)_h=0,
	\]
	by mass conservation. Moreover,
	\[
		\norm{U^j}_{\ell^\infty}
		+\norm{D_hU^j}_{\ell^\infty}
		\le C_T,
	\]
	which follows from the projection estimate
	\eqref{eq:weak-velocity-projection-consistency} and
	Assumption~\ref{ass:smooth-solution}.

	\emph{Convective defect.}  Since
	\(d_h\bfu_h^{*,n+1}=d_hU^{*,n+1}=0\), Lemma~\ref{lem:mac-convection-form} gives
	\begin{align*}
		(\calN_u^{n+1},e_u^{n+1})_h
		 & =b_h(\bfu_h^{*,n+1},U^{n+1},e_u^{n+1})
		-b_h(U^{*,n+1},U^{n+1},e_u^{n+1})         \\
		 & =-b_h(e_u^{*,n+1},U^{n+1},e_u^{n+1}).
	\end{align*}
	Hence,
	\begin{align*}
		|b_h(e_u^{*,n+1},U^{n+1},e_u^{n+1})|
		 & \le C_T\norm{e_u^{*,n+1}}_h\norm{e_u^{n+1}}_h \\
		 & \le C_T\left(\norm{e_u^{n+1}}_h^2+
		\norm{e_u^n}_h^2
		+\norm{e_u^{n-1}}_h^2\right),
	\end{align*}
	where we used the boundedness of \(D_hU^{n+1}\).

	\emph{Double-well defect.}  From \eqref{eq:def-Nphi},
	\begin{equation}\label{eq:Nphi-gradient-bound}
		\begin{aligned}
			\norm{D_h\calN_\phi^{n+1}}_h
			 & \le \norm{D_h\left(\chi(\Phi^{\dagger,n+1},\Phi^{\dagger,n})
				       -\chi(\phi_h^{\dagger,n+1},\phi_h^{\dagger,n})\right)}_h        \\
			 & \quad +\tfrac12\norm{D_h(e_\phi^{\dagger,n+1}+e_\phi^{\dagger,n})}_h \\
			 & \le C_T\left(\norm{D_he_\phi^{\dagger,n+1}}_h+
			\norm{D_he_\phi^{\dagger,n}}_h\right)                                   \\
			 & \le C_T\left(\norm{D_he_\phi^{n+1}}_h+
			\norm{D_he_\phi^n}_h
			+\norm{D_he_\phi^{n-1}}_h\right).
		\end{aligned}
	\end{equation}
	Here we used
	Lemmas~\ref{lem:linfty-phase},
	\ref{lem:mac-discrete-estimates},
	and~\ref{lem:chi-estimates}, together with
	Assumption~\ref{ass:smooth-solution}.

	For \(\calC_\phi^{n+1}\), \eqref{eq:def-Cphi} yields
	\begin{equation}\label{eq:Cphi-hminus-one}
		\begin{aligned}
			\calC_\phi^{n+1}
			 & =-d_h\left(\Phi^{*,n+1}e_u^{n+1}
			+e_\phi^{*,n+1}U^{n+1}-e_\phi^{*,n+1}e_u^{n+1}\right),      \\
			\norm{\calC_\phi^{n+1}}_{H_h^{-1}}
			 & \le C\norm{\Phi^{*,n+1}e_u^{n+1}
				        +e_\phi^{*,n+1}U^{n+1}-e_\phi^{*,n+1}e_u^{n+1}}_h, \\
			\norm{\calC_\phi^{n+1}}_{H_h^{-1}}^2
			 & \le C_T\left(\norm{e_u^{n+1}}_h^2+
			\norm{D_he_\phi^n}_h^2
			+\norm{D_he_\phi^{n-1}}_h^2\right).
		\end{aligned}
	\end{equation}
	By \eqref{eq:error-phi},
	\begin{equation*}
		\norm{D_{2\tau}e_\phi^{n+1}}_{H_h^{-1}}
		\le \norm{D_he_\mu^{n+1}}_h
		+\norm{\calC_\phi^{n+1}}_{H_h^{-1}}
		+\norm{\calR_\phi^{n+1}}_{H_h^{-1}} .
	\end{equation*}
	Since \(D_{2\tau}e_\phi^{n+1}\) has zero mean,
	\begin{align*}
		|(\calN_\phi^{n+1},D_{2\tau}e_\phi^{n+1})_h|
		 & \le \norm{D_h\calN_\phi^{n+1}}_h
		\norm{D_{2\tau}e_\phi^{n+1}}_{H_h^{-1}} .
	\end{align*}
	Combining
	\eqref{eq:Nphi-gradient-bound},
	\eqref{eq:Cphi-hminus-one},
	Lemma~\ref{lem:consistency},
	and Young's inequality yields the second estimate in
	Lemma~\ref{lem:nonlinear-estimates}.

	\emph{Coupling defect.}  Using
	\(\phi_h^{*,n+1}=\Phi^{*,n+1}-e_\phi^{*,n+1}\),
	\(\bfu_h^{n+1}=U^{n+1}-e_u^{n+1}\), and
	\(\mu_h^{n+1}=\mathcal M^{n+1}-e_\mu^{n+1}\), we obtain
	\begin{align*}
		\calC_u^{n+1}
		 & =-\Phi^{*,n+1}D_he_\mu^{n+1}
		-e_\phi^{*,n+1}D_h\mathcal M^{n+1}
		+e_\phi^{*,n+1}D_he_\mu^{n+1},      \\
		\calC_\phi^{n+1}
		 & =-d_h\left(\Phi^{*,n+1}e_u^{n+1}
		+e_\phi^{*,n+1}U^{n+1}-e_\phi^{*,n+1}e_u^{n+1}\right).
	\end{align*}
	Moreover,
	\begin{equation*}
		- (\Phi^{*,n+1}D_he_\mu^{n+1},e_u^{n+1})_h
		-(d_h(\Phi^{*,n+1}e_u^{n+1}),e_\mu^{n+1})_h=0,
	\end{equation*}
	by Lemma~\ref{lem:mac-sbp-identities}. Therefore
	\begin{align*}
		|(e_\phi^{*,n+1}D_h\mathcal M^{n+1},e_u^{n+1})_h|
		 & \le C_T\left(\norm{D_he_\phi^n}_h^2+
		\norm{D_he_\phi^{n-1}}_h^2
		+\norm{e_u^{n+1}}_h^2\right),               \\
		|(e_\phi^{*,n+1}D_he_\mu^{n+1},e_u^{n+1})_h|
		 & \le \tfrac1{32}\norm{D_he_\mu^{n+1}}_h^2
		+\tfrac{\nu}{32}\norm{D_he_u^{n+1}}_h^2     \\
		 & \quad +C_T\left(\norm{e_u^{n+1}}_h^2+
		\norm{D_he_\phi^n}_h^2
		+\norm{D_he_\phi^{n-1}}_h^2\right),         \\
		|(e_\phi^{*,n+1}U^{n+1},D_he_\mu^{n+1})_h|
		 & \le \tfrac1{32}\norm{D_he_\mu^{n+1}}_h^2
		+C_T\left(\norm{D_he_\phi^n}_h^2+
		\norm{D_he_\phi^{n-1}}_h^2\right),          \\
		|(e_\phi^{*,n+1}e_u^{n+1},D_he_\mu^{n+1})_h|
		 & \le \tfrac1{32}\norm{D_he_\mu^{n+1}}_h^2
		+C_T\norm{e_u^{n+1}}_h^2 .
	\end{align*}
	Using Poincar\'e's inequality, Lemma~\ref{lem:mac-discrete-estimates}, and
	the uniform bounds for \(D_h\mathcal M^{n+1}\), \(U^{n+1}\), and
	\(e_\phi^{*,n+1}\), these estimates yield
	the third bound in Lemma~\ref{lem:nonlinear-estimates}.
\end{proof}

Define the error energy by
\begin{align}\label{eq:error-energy}
	\mathfrak E^{n}
	=\calG(e_u^n,e_u^{n-1})
	+\eps^2\calG(D_he_\phi^n,D_he_\phi^{n-1})
	+\tfrac38\norm{e_\phi^n-e_\phi^{n-1}}_h^2.
\end{align}

\begin{lemma}[Discrete velocity projection]\label{lem:mac-leray}
	Let
	\[
		Z_h=\{\boldsymbol z_h\in\mathsf E_h:d_h\boldsymbol z_h=0\}.
	\]
	For \(\bfv_h\in\mathsf E_h\), define \(P_h\bfv_h\) by
	\begin{equation*}
		P_h\bfv_h=\bfv_h+D_h\pi_h,
		\quad
		A_h\pi_h=d_h\bfv_h,
		\quad
		(\pi_h,1)_h=0 .
	\end{equation*}
	Then \(P_h\bfv_h\in Z_h\) and, for \(q_h\in\mathsf C_h\),
	\begin{equation*}
		P_hD_hq_h=0,
		\quad
		\norm{P_h\bfv_h}_{H_h^{-1}}\le C\norm{\bfv_h}_{H_h^{-1}},
		\quad
		\norm{A_h\bfv_h}_{H_h^{-1}}\le C\norm{D_h\bfv_h}_h .
	\end{equation*}
	Here \(A_h\) acts componentwise on vector fields. This operator is distinct
	from the smooth-velocity projection \(\Pi_u\) used in the approximation
	estimates; it is used only to eliminate the discrete pressure gradient in the
	pressure estimate below. The proof is given in Appendix~\ref{app:mac-leray}.
\end{lemma}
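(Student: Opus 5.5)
The plan is to verify the four claims directly from the summation-by-parts identities of Lemma~\ref{lem:mac-sbp-identities}. The only nontrivial analytic input is a discrete \(H^2\)-regularity estimate for the Neumann Laplacian on the rectangle.

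\emph{Well-posedness and the first two claims.} For \(\bfv_h\in\mathsf E_h\) the no-slip boundary values give \((d_h\bfv_h,1)_h=-(\bfv_h,D_h1)_h=0\). Since \(A_h\) is symmetric positive definite on \(\mathsf C_{h,0}\) with kernel the constants, the problem \(A_h\pi_h=d_h\bfv_h\), \((\pi_h,1)_h=0\) has a unique solution. Because \(A_h=-d_hD_h\), we get
\[
d_hP_h\bfv_h=d_h\bfv_h-A_h\pi_h=0,
\]
so \(P_h\bfv_h\in Z_h\). For \(\bfv_h=D_hq_h\) (interior edge values only, so \(D_hq_h\in\mathsf E_h\)), the equation reads \(A_h\pi_h=d_hD_hq_h=-A_hq_h\). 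Hence \(\pi_h=-(q_h-\overline{q_h})\), so \(D_h\pi_h=-D_hq_h\) and \(P_hD_hq_h=0\).

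\emph{Symmetry of \(P_h\).} For \(\boldsymbol z_h\in Z_h\) and any \(\pi_h\in\mathsf C_h\), we have \((\boldsymbol z_h,D_h\pi_h)_h=-(d_h\boldsymbol z_h,\pi_h)_h=0\). Thus \(P_h\) is the \(\ell^2\)-orthogonal projection onto \(Z_h\). Writing \(\boldsymbol w_h=P_h\boldsymbol w_h-D_h\pi_{\boldsymbol w}\), this gives
\[
(P_h\bfv_h,\boldsymbol w_h)_h=(P_h\bfv_h,P_h\boldsymbol w_h)_h=(\bfv_h,P_h\boldsymbol w_h)_h .
\]
Consequently
\[
\norm{P_h\bfv_h}_{H_h^{-1}}
\le\norm{\bfv_h}_{H_h^{-1}}\sup_{\boldsymbol w_h\ne0}\frac{\norm{D_hP_h\boldsymbol w_h}_h}{\norm{D_h\boldsymbol w_h}_h},
\]
so the \(H^{-1}_h\) stability reduces to the \(H^1\)-stability \(\norm{D_hP_h\boldsymbol w_h}_h\le C\norm{D_h\boldsymbol w_h}_h\). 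Since \(P_h\boldsymbol w_h=\boldsymbol w_h+D_h\pi_{\boldsymbol w}\) and \(\norm{d_h\boldsymbol w_h}_h\le C\norm{D_h\boldsymbol w_h}_h\) (as \(d_h\boldsymbol w_h\) is a sum of entries of the discrete gradient), it suffices to prove
\[
\norm{D_hD_h\pi_h}_h\le C\norm{A_h\pi_h}_h ,
\]
where \(D_hD_h\pi_h\) is understood via the vector gradient norm \eqref{eq:mac-vector-h1} applied to \(D_h\pi_h\).

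\emph{Main obstacle: the discrete \(H^2\)-regularity.} I would prove the displayed bound as a discrete Miranda--Talenti identity on the rectangle. The plan is to expand \(\norm{A_h\pi_h}_h^2=\norm{d_xD_x\pi_h+d_yD_y\pi_h}_h^2\) and handle the cross term \(2(d_xD_x\pi_h,d_yD_y\pi_h)_h\) by summing by parts once in \(x\) and once in \(y\). The homogeneous Neumann ghost values make \(D_x\pi_h\) vanish on the vertical boundary edges and \(D_y\pi_h\) vanish on the horizontal ones, so no boundary terms appear. Since \(D_x\) and \(D_y\) commute on the tensor grid, the cross term equals \(2\norm{D_yD_x\pi_h}_v^2\). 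This gives
\[
\norm{A_h\pi_h}_h^2=\norm{d_xD_x\pi_h}_c^2+2\norm{D_yD_x\pi_h}_v^2+\norm{d_yD_y\pi_h}_c^2=\norm{D_hD_h\pi_h}_h^2 ,
\]
so the regularity estimate holds with \(C=1\). The delicate point is checking the corner and boundary index ranges, where the vertex-centered mixed differences must vanish consistently with the ghost values.

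\emph{Last estimate.} Applying the second summation-by-parts identity componentwise, with the Dirichlet ghost values for velocity, gives \((A_h\bfv_h,\boldsymbol w_h)_h=(D_h\bfv_h,D_h\boldsymbol w_h)_h\le\norm{D_h\bfv_h}_h\norm{D_h\boldsymbol w_h}_h\). Taking the supremum over \(\boldsymbol w_h\) yields \(\norm{A_h\bfv_h}_{H_h^{-1}}\le\norm{D_h\bfv_h}_h\).
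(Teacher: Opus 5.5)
Your reduction is the same as the paper's proof in Appendix~\ref{app:mac-leray}. \(P_h\) is the \(\ell^2_h\)-orthogonal projection onto \(Z_h\), hence self-adjoint, so the \(H_h^{-1}\) bound reduces first to \(\norm{D_hP_h\bfw_h}_h\le C\norm{D_h\bfw_h}_h\) and then to \(\norm{D_hD_h\pi_h}_h\le C\norm{A_h\pi_h}_h\). The paper simply asserts this last bound as \eqref{eq:mac-poisson-h2}. Your arguments for \(P_h\bfv_h\in Z_h\), \(P_hD_hq_h=0\) and \(\norm{A_h\bfv_h}_{H_h^{-1}}\le\norm{D_h\bfv_h}_h\) are fine.

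The gap is in the Miranda--Talenti step, exactly at the ``delicate point'' you flag. For the triangle inequality \(\norm{D_hP_h\bfw_h}_h\le\norm{D_h\bfw_h}_h+\norm{D_hD_h\pi_h}_h\), the last term must be the vector norm \eqref{eq:mac-vector-h1} of \(D_h\pi_h\in\mathsf E_h\) with the no-slip ghost values, because that is the norm defining \(H_h^{-1}\). For the tangential component this is the odd reflection \((D_x\pi_h)_{i,-\frac12}=-(D_x\pi_h)_{i,\frac12}\). So at a wall vertex \((D_yD_x\pi_h)_{i,0}=2(D_x\pi_h)_{i,\frac12}/h_y\neq0\), and similarly on the vertical walls. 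Your summation by parts is correct, but the vertex function it produces is the Neumann-consistent mixed difference \(D_xD_y\pi_h\), which vanishes at wall vertices. What you have actually proved is \(\norm{A_h\pi_h}_h^2=\norm{d_xD_x\pi_h}_c^2+2\norm{D_xD_y\pi_h}_v^2+\norm{d_yD_y\pi_h}_c^2\) for the scalar Neumann Hessian. The no-slip quantity exceeds this by wall terms such as \(4(h_x/h_y)\sum_i|(D_x\pi_h)_{i,\frac12}|^2\), which \(\norm{A_h\pi_h}_h\) does not control.

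No sharper estimate can close this. Since \(P_h\) is self-adjoint and \(\norm{\cdot}_{H_h^{-1}}\) is the dual norm of \(\norm{D_h\cdot}_h\), the bound \(\norm{P_h\bfv_h}_{H_h^{-1}}\le C\norm{\bfv_h}_{H_h^{-1}}\) for all \(\bfv_h\) is equivalent to \(\norm{D_hP_h\bfw_h}_h\le C\norm{D_h\bfw_h}_h\) for all \(\bfw_h\). The latter fails uniformly in \(h\). Take \(\bfw_h=R_e\bfw\) with \(\bfw\) smooth and compactly supported in \(\Omega\); then \(\norm{D_h\bfw_h}_h=O(1)\). Near the bottom wall, \((P_h\bfw_h)_1=D_x\pi_h\approx\partial_x\pi\), where \(-\Delta\pi=\nabla\cdot\bfw\) and \(\partial_{\mathbf n}\pi=0\). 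The wall term then gives \(\norm{D_hP_h\bfw_h}_h^2\ge4(h_x/h_y)\sum_i|(D_x\pi_h)_{i,\frac12}|^2\approx 4h_y^{-1}\norm{\partial_x\pi(\cdot,0)}_{L^2(0,L_x)}^2\), which is unbounded as \(h\to0\) for generic \(\bfw\). This is the discrete counterpart of the fact that the Leray projector does not map \(H_0^1\) into itself: its boundary tangential component is the tangential derivative of \(\pi\).

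The paper's proof has the same hole. \eqref{eq:mac-poisson-h2} holds (with \(C=1\), by your computation) only for the Neumann Hessian, whereas \eqref{eq:helmholtz-h1-stability} needs the no-slip one. So the \(H_h^{-1}\)-stability of \(P_h\) holds only with a constant of order \(h^{-1/2}\), and the pressure estimate built on it requires a different argument.
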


For scalar physical errors in the theorem below, we use
\[
	\norm{\psi-\varphi_h}_{H_h^1}
	:=\norm{R_c\psi-\varphi_h}_h+
	\norm{R_e\nabla\psi-D_h\varphi_h}_h,
	\quad \psi\in C^1(\overline\Omega),\quad \varphi_h\in\mathsf C_h .
\]

\begin{theorem}[Second-order convergence]\label{thm:main-convergence}
	Let Assumptions~\ref{ass:start-up} and~\ref{ass:smooth-solution} hold. Then
	there exist \(h_0>0\), \(\dt_0>0\), and \(C_T>0\), independent of
	\(h\) and \(\dt\), such that, for \(0<h\le h_0\), \(0<\dt\le\dt_0\), and
	\(N\dt=T\), the solution of \eqref{eq:scheme} satisfies
	\begin{align}\label{eq:main-estimate}
		\max_{1\le n\le N}\mathfrak E^n
		+\dt\sum_{n=1}^{N}\left(\norm{D_he_\mu^n}_h^2
		+\nu\norm{D_he_u^n}_h^2\right)
		\le C_T(\dt^4+h^4).
	\end{align}
	Consequently,
	\begin{equation}\label{eq:physical-error-estimate}
		\begin{aligned}
			 & \max_{1\le n\le N}\left(\norm{\phi(t_n)-\phi_h^n}_{H_h^1}
			+\norm{R_e\bfu(t_n)-\bfu_h^n}_h\right) +\left(\dt\sum_{n=1}^N\norm{\mu(t_n)-\mu_h^n}_{H_h^1}^2\right)^{1/2}
			\le C_T(\dt^2+h^2),
		\end{aligned}
	\end{equation}
	and
	\begin{equation}\label{eq:pressure-estimate}
		\left(\dt\sum_{n=1}^N\norm{R_c p(t_n)-p_h^n}_h^2\right)^{1/2}
		\le C_T(\dt^2+h^2).
	\end{equation}
\end{theorem}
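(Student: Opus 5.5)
We follow the energy argument of Theorem~\ref{thm:energy-stability}, now applied to the error system \eqref{eq:error-system}. First, test \eqref{eq:error-u} with \(\dt e_u^{n+1}\). By \eqref{eq:error-div} and Lemma~\ref{lem:mac-sbp-identities}, the pressure term vanishes. Next, test \eqref{eq:error-phi} with \(\dt e_\mu^{n+1}\), and test \eqref{eq:error-mu} with \(\dt D_{2\tau}e_\phi^{n+1}\). The linear part of \eqref{eq:error-mu} is \(\tfrac12(e_\phi^{\dagger,n+1}+e_\phi^{\dagger,n})-e_\phi^{*,n+1}=\tfrac34\delta^2e_\phi^{n+1}\). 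Paired with \(\dt D_{2\tau}e_\phi^{n+1}\), it gives exactly \(\tfrac38(\norm{e_\phi^{n+1}-e_\phi^n}_h^2-\norm{e_\phi^n-e_\phi^{n-1}}_h^2)+\tfrac34\norm{\delta^2e_\phi^{n+1}}_h^2\), by the computation in Lemma~\ref{lem:bdf2-algebra}. The quartic part has been moved into \(\calN_\phi^{n+1}\). Combined with the \(G\)-stability identity for \(e_u\) and for \(D_he_\phi\), this yields
\[
\mathfrak E^{n+1}-\mathfrak E^n+\dt\norm{D_he_\mu^{n+1}}_h^2+\nu\dt\norm{D_he_u^{n+1}}_h^2\le \dt\,\bigl|\text{RHS}^{n+1}\bigr|.
\]
Here \(\text{RHS}^{n+1}\) collects \((\calN_u+\calC_u+\calR_u,e_u^{n+1})\), \((\calC_\phi+\calR_\phi,e_\mu^{n+1})\) and \((\calN_\phi+\calR_\mu,D_{2\tau}e_\phi^{n+1})\).

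The nonlinear and coupling terms are handled directly by Lemma~\ref{lem:nonlinear-estimates}. The residual terms are estimated by duality. We use \(|(\calR_u,e_u)|\le\norm{\calR_u}_{H_h^{-1}}\norm{D_he_u}_h\) and \(|(\calR_\phi,e_\mu)|\le\norm{\calR_\phi}_{H_h^{-1}}\norm{D_he_\mu}_h\). The latter is legitimate because \(\calR_\phi\) has zero mean: both \(D_{2\tau}\Phi\) (by construction of \(I_c\)) and the divergence and \(A_h\) terms integrate to zero. For \((\calR_\mu,D_{2\tau}e_\phi)\) we use \(\norm{D_h\calR_\mu}_h\norm{D_{2\tau}e_\phi}_{H_h^{-1}}\), together with the \(H_h^{-1}\) bound of \(D_{2\tau}e_\phi\) through \eqref{eq:error-phi} that already appears in the proof of Lemma~\ref{lem:nonlinear-estimates}. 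After Young's inequality, the dissipation terms are absorbed. The remaining quantities \(\norm{e_u^j}_h^2\) and \(\norm{D_he_\phi^j}_h^2\), for \(j=n-1,n,n+1\), are bounded by \(C(\mathfrak E^{n+1}+\mathfrak E^n)\), since \(\calG(a,b)\ge c(\norm a^2+\norm b^2)\). Lemma~\ref{lem:consistency} supplies the \(O(\dt^4+h^4)\) source.

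Summing from \(n=1\) to \(\ell-1\) and using Assumption~\ref{ass:start-up} gives \(\mathfrak E^1\le C(\dt^4+h^4)\). The sum contains a term \(C_T\dt\,\mathfrak E^{\ell}\). For \(\dt\le\dt_0\) small, this term is moved to the left, and Lemma~\ref{lem:discrete-gronwall} then gives \eqref{eq:main-estimate}. We expect this step to be the main obstacle. The difficulty is that Lemma~\ref{lem:nonlinear-estimates} hides uniform bounds on \(\phi_h\) (via \(K_\infty, K_4\) in Lemma~\ref{lem:chi-estimates}), which come from Lemma~\ref{lem:linfty-phase}. One must check that the required \(\ell^\infty\) and \(W^{1,4}\) bounds of the numerical \(\phi_h^{\dagger}\) are uniform in time, not merely bounded in \(\ell^2\) in time. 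Lemma~\ref{lem:linfty-phase} gives \(\max_n\norm{A_h\phi_h^n}_h\le C_T\), so \(K_4\) is controlled by the last inequality of Lemma~\ref{lem:mac-discrete-estimates}. If some factor were only square-summable in time, we would instead put it into the coefficient \(c_m\) of Lemma~\ref{lem:discrete-gronwall}; this is why that lemma is used in its variable-coefficient form.

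The physical estimate \eqref{eq:physical-error-estimate} follows from three ingredients: the triangle inequality, Lemma~\ref{lem:mac-projections} (for \(I_c-R_c\), \(\Pi_u-R_e\), and \(D_hR_c\psi-R_e\nabla\psi\)), and the discrete Poincaré inequality for the zero-mean \(e_\phi\). For \(e_\mu\), the mean is controlled by testing \eqref{eq:error-mu} with \(1\): the mean of \(\calN_\phi\) is bounded by \(C(\norm{e_\phi^{\dagger,n+1}}_h+\norm{e_\phi^{\dagger,n}}_h)\) using the \(\ell^\infty\) bounds. For the pressure, apply \(P_h\) from Lemma~\ref{lem:mac-leray}. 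Then the inf-sup condition of Lemma~\ref{lem:mac-infsup} gives \(\beta\norm{e_p^{n+1}}_h\le\norm{D_he_p^{n+1}}_{H_h^{-1}}\). This is bounded by \(\norm{D_{2\tau}e_u^{n+1}}_{H_h^{-1}}+C\norm{D_he_u^{n+1}}_h+\norm{\calN_u+\calC_u+\calR_u}_{H_h^{-1}}\). The time-difference term \(\norm{D_{2\tau}e_u}_{H_h^{-1}}\) is estimated from the divergence-free part of \eqref{eq:error-u} tested by \(P_h\)-projected functions, and is \(\ell^2\)-summed using \eqref{eq:main-estimate}. Finally, \(R_p\) versus \(R_c\) is handled by Lemma~\ref{lem:mac-projections}.
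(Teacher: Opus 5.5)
Your proposal is correct and follows the paper's proof essentially step by step. It uses the same three test functions and cancellations, the exact $\tfrac34\delta^2e_\phi^{n+1}$ identity, Lemma~\ref{lem:nonlinear-estimates} together with $H_h^{-1}$ duality for the residuals, and the discrete Gronwall argument after absorbing $C_T\dt\,\mathfrak E^{\ell}$; it also recovers the mean of $e_\mu$ from \eqref{eq:error-mu} and bounds the pressure via $P_h$ and inf-sup. The differences are cosmetic: you sum from $n=1$ instead of treating the first step separately, and you leave implicit the $H_h^{-1}$ bounds on $\calN_u^{n+1}$ and $\calC_u^{n+1}$ in the pressure step, which the paper obtains from Lemma~\ref{lem:mac-convection-estimates} and the decomposition of $\calC_u^{n+1}$.
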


\begin{proof}
	Testing \eqref{eq:error-u} by \(\dt e_u^{n+1}\),
	\eqref{eq:error-phi} by \(\dt e_\mu^{n+1}\), and
	\eqref{eq:error-mu} by \(\dt D_{2\tau}e_\phi^{n+1}\), then subtracting the
	last identity from the second and adding the first, yields the basic error
	relation. The pressure term vanishes by
	Lemma~\ref{lem:mac-sbp-identities} and \eqref{eq:error-div}, while the two
	\((e_\mu^{n+1},D_{2\tau}e_\phi^{n+1})_h\) terms cancel. Moreover,
	\[
		\tfrac12(e_\phi^{\dagger,n+1}+e_\phi^{\dagger,n})-e_\phi^{*,n+1}
		=\tfrac34\delta^2e_\phi^{n+1},
	\]
	and therefore
	\[
		\begin{aligned}
			 & \dt\left(\tfrac12(e_\phi^{\dagger,n+1}+e_\phi^{\dagger,n})
			-e_\phi^{*,n+1},D_{2\tau}e_\phi^{n+1}\right)_h                \\
			 & \quad=\tfrac38\left(\norm{e_\phi^{n+1}-e_\phi^n}_h^2
			-\norm{e_\phi^n-e_\phi^{n-1}}_h^2\right)
			+\tfrac34\norm{\delta^2e_\phi^{n+1}}_h^2 .
		\end{aligned}
	\]
	Using Lemma~\ref{lem:bdf2-algebra}, we obtain
	\begin{align}\label{eq:error-energy-recursion}
		 & \mathfrak E^{n+1}-\mathfrak E^n
		+\tfrac14\norm{\delta^2e_u^{n+1}}_h^2
		+\tfrac{\eps^2}{4}\norm{D_h\delta^2e_\phi^{n+1}}_h^2
		+\tfrac34\norm{\delta^2e_\phi^{n+1}}_h^2\notag                    \\
		 & \quad
		+\dt\norm{D_he_\mu^{n+1}}_h^2
		+\nu\dt\norm{D_he_u^{n+1}}_h^2 \notag                             \\
		 & \quad
		\le \dt\left|(\calN_u^{n+1},e_u^{n+1})_h\right|
		+\dt\left|(\calN_\phi^{n+1},D_{2\tau}e_\phi^{n+1})_h\right|\notag \\
		 & \qquad
		+\dt\left|(\calC_u^{n+1},e_u^{n+1})_h
		+(\calC_\phi^{n+1},e_\mu^{n+1})_h\right|\notag                    \\
		 & \qquad
		+\dt |(\calR_u^{n+1},e_u^{n+1})_h|
		+\dt |(\calR_\phi^{n+1},e_\mu^{n+1})_h|\notag                     \\
		 & \qquad
		+\dt |(\calR_\mu^{n+1},D_{2\tau}e_\phi^{n+1})_h| .
	\end{align}
	The first two residual terms satisfy
	\begin{align*}
		\dt |(\calR_u^{n+1},e_u^{n+1})_h|
		 & \le \tfrac{\nu\dt}{32}\norm{D_he_u^{n+1}}_h^2
		+C\dt\norm{\calR_u^{n+1}}_{H_h^{-1}}^2,          \\
		\dt |(\calR_\phi^{n+1},e_\mu^{n+1})_h|
		 & \le \tfrac{\dt}{32}\norm{D_he_\mu^{n+1}}_h^2
		+C\dt\norm{\calR_\phi^{n+1}}_{H_h^{-1}}^2.
	\end{align*}
	For the last residual term, \(D_{2\tau}e_\phi^{n+1}\) has zero mean, so
	\begin{align*}
		\dt |(\calR_\mu^{n+1},D_{2\tau}e_\phi^{n+1})_h|
		 & \le \dt\norm{D_h\calR_\mu^{n+1}}_h
		\norm{D_{2\tau}e_\phi^{n+1}}_{H_h^{-1}}   \\
		 & \le \dt\norm{D_h\calR_\mu^{n+1}}_h
		\Bigl(\norm{D_he_\mu^{n+1}}_h
		+\norm{\calC_\phi^{n+1}}_{H_h^{-1}}\Bigr) \\
		 & \quad +\dt\norm{D_h\calR_\mu^{n+1}}_h
		\norm{\calR_\phi^{n+1}}_{H_h^{-1}}.
	\end{align*}
	Using \eqref{eq:Cphi-hminus-one} and Young's inequality, the residual terms in
	\eqref{eq:error-energy-recursion} are bounded by absorbable parts of
	\(\nu\dt\norm{D_he_u^{n+1}}_h^2\) and
	\(\dt\norm{D_he_\mu^{n+1}}_h^2\), by
	\(C_T\dt(\mathfrak E^{n+1}+\mathfrak E^n+\mathfrak E^{n-1})\), and by
	\begin{equation}\label{eq:residual-energy-bound}
		C\dt\left(\norm{\calR_u^{n+1}}_{H_h^{-1}}^2+
		\norm{\calR_\phi^{n+1}}_{H_h^{-1}}^2+
		\norm{D_h\calR_\mu^{n+1}}_h^2\right).
	\end{equation}
	By Lemma~\ref{lem:consistency}, the time sum of
	\eqref{eq:residual-energy-bound} is \(O(\dt^4+h^4)\).

	The \(G\)-functional satisfies
	\begin{equation}\label{eq:g-equivalence}
		c_G\left(\norm{a}_h^2+\norm{b}_h^2\right)
		\le \calG(a,b)
		\le C_G\left(\norm{a}_h^2+\norm{b}_h^2\right),
	\end{equation}
	with constants independent of \(h\). Consequently,
	\begin{equation*}
		\norm{e_u^n}_h^2+
		\norm{e_u^{n-1}}_h^2
		+\norm{D_he_\phi^n}_h^2+\norm{D_he_\phi^{n-1}}_h^2
		+\norm{e_\phi^n-e_\phi^{n-1}}_h^2 .
	\end{equation*}
	Substituting Lemma~\ref{lem:nonlinear-estimates} into
	\eqref{eq:error-energy-recursion}, using \eqref{eq:g-equivalence}, and
	absorbing the dissipative fractions yield, for \(n\ge2\),
	\begin{equation}\label{eq:gronwall-ready}
		\begin{aligned}
			 & \mathfrak E^{n+1}-\mathfrak E^n
			+c\dt\left(\norm{D_he_\mu^{n+1}}_h^2
			+\nu\norm{D_he_u^{n+1}}_h^2\right) \\
			 & \quad
			\le C_T\dt(\mathfrak E^{n+1}+\mathfrak E^n+\mathfrak E^{n-1})
			+C_T\dt(\dt^4+h^4).
		\end{aligned}
	\end{equation}
	For \(2\le m\le N\), summing \eqref{eq:gronwall-ready} from \(n=2\) to
	\(m-1\) gives
	\begin{align*}
		 & \mathfrak E^m-
		\mathfrak E^2
		+c\dt\sum_{n=2}^{m-1}
		\left(\norm{D_he_\mu^{n+1}}_h^2
		+\nu\norm{D_he_u^{n+1}}_h^2\right) \\
		 & \quad
		\le C_T\dt\sum_{n=2}^{m-1}
		(\mathfrak E^{n+1}+\mathfrak E^n+\mathfrak E^{n-1})
		+C_T(m-2)\dt(\dt^4+h^4).
	\end{align*}
	Choose \(\dt_0 > 0\) so that \(C_T\dt_0\le1/4\). Moving the term
	\(C_T\dt\mathfrak E^m\) to the left and reindexing the remaining
	history terms yields
	\begin{align*}
		 & \mathfrak E^m
		+c\dt\sum_{n=2}^{m-1}
		\left(\norm{D_he_\mu^{n+1}}_h^2
		+\nu\norm{D_he_u^{n+1}}_h^2\right) \\
		 & \quad
		\le C_T(\mathfrak E^1+\mathfrak E^2)
		+C_T\dt\sum_{j=2}^{m-1}\mathfrak E^j
		+C_T(\dt^4+h^4).
	\end{align*}
	The corresponding estimate at \(n=1\), together with
	Assumption~\ref{ass:start-up}, bounds \(\mathfrak E^2\) and the level-two
	dissipation. The start-up bound for \(\mathfrak E^1\), the preceding
	inequality, Lemma~\ref{lem:discrete-gronwall} applied to the reindexed sequence
	\(a_\ell=\mathfrak E^{\ell+1}\), and Lemma~\ref{lem:consistency} prove
	\eqref{eq:main-estimate}.

	The velocity and phase estimates in
	\eqref{eq:physical-error-estimate} follow from
	\eqref{eq:main-estimate},
	Poincar\'e's inequality for the zero-mean phase error,
	and Lemma~\ref{lem:mac-projections}. For the chemical potential,
	averaging \eqref{eq:error-mu} over the grid gives
	\[
		\overline{e_\mu^{n+1}}
		=
		\overline{\calN_\phi^{n+1}}
		+\overline{\calR_\mu^{n+1}},
	\]
	since
	\[
		(A_he_\phi^{n+1},1)_h=0,
		\quad
		\left(
		\tfrac12(e_\phi^{\dagger,n+1}+e_\phi^{\dagger,n})
		-e_\phi^{*,n+1},
		1
		\right)_h=0.
	\]
	The local Lipschitz bound for \(\chi\), Poincar\'e's inequality, and
	Lemma~\ref{lem:consistency} therefore imply
	\begin{equation}\label{eq:emu-mean-recovery}
		|\overline{e_\mu^{n+1}}|
		\le C_T\left(\norm{D_he_\phi^{n+1}}_h+
		\norm{D_he_\phi^n}_h
		+\norm{D_he_\phi^{n-1}}_h+\dt^2+h^2\right).
	\end{equation}
	Together with
	\begin{equation*}
		\norm{e_\mu^{n+1}}_h
		\le C\norm{D_he_\mu^{n+1}}_h+C|\overline{e_\mu^{n+1}}|,
	\end{equation*}
	this yields the chemical-potential estimate in
	\eqref{eq:physical-error-estimate}, after accounting for the restriction
	error in Lemma~\ref{lem:mac-projections}.

	For the pressure, Lemma~\ref{lem:mac-infsup} gives
	\begin{align*}
		\beta\norm{e_p^{n+1}}_h
		 & \le \sup_{\bfv_h\ne0}
		\tfrac{|(e_p^{n+1},d_h\bfv_h)_h|}{\norm{D_h\bfv_h}_h}
		=\sup_{\bfv_h\ne0}
		\tfrac{|(D_he_p^{n+1},\bfv_h)_h|}{\norm{D_h\bfv_h}_h}.
	\end{align*}
	Since
	\[
		P_hD_{2\tau}e_u^{n+1}
		=
		D_{2\tau}e_u^{n+1},
		\quad
		P_hD_he_p^{n+1}
		=
		0,
	\]
	applying \(P_h\) to \eqref{eq:error-u} and using
	Lemma~\ref{lem:mac-leray} yields
	\begin{equation}\label{eq:velocity-time-dual-bound-pointwise}
		\begin{aligned}
			\norm{D_{2\tau}e_u^{n+1}}_{H_h^{-1}}
			\le C\bigl( & \nu\norm{D_he_u^{n+1}}_h
			+\norm{\calN_u^{n+1}}_{H_h^{-1}} +\norm{\calC_u^{n+1}}_{H_h^{-1}}
			+\norm{\calR_u^{n+1}}_{H_h^{-1}}\bigr).
		\end{aligned}
	\end{equation}
	The error momentum equation and \eqref{eq:velocity-time-dual-bound-pointwise}
	bound \(\norm{e_p^{n+1}}_h\) by the same four terms on the right.

	For the convection defect, use
	\begin{equation*}
		\calN_u^{n+1}
		=-\mathcal B_h(U^{*,n+1},e_u^{n+1})
		-\mathcal B_h(e_u^{*,n+1},U^{n+1})
		+\mathcal B_h(e_u^{*,n+1},e_u^{n+1}).
	\end{equation*}
	Lemma~\ref{lem:mac-convection-estimates} gives
	\[
		\norm{\mathcal B_h(U^{*,n+1},e_u^{n+1})}_{H_h^{-1}}
		+\norm{\mathcal B_h(e_u^{*,n+1},U^{n+1})}_{H_h^{-1}}
		\le C_T\left(\norm{e_u^{n+1}}_h+\norm{e_u^{*,n+1}}_h\right).
	\]
	For the last term,
	\begin{equation*}
		\norm{\mathcal B_h(e_u^{*,n+1},e_u^{n+1})}_{H_h^{-1}}^2
		\le C\norm{e_u^{*,n+1}}_h\norm{D_he_u^{*,n+1}}_h
		\norm{e_u^{n+1}}_h\norm{D_he_u^{n+1}}_h .
	\end{equation*}
	Using \eqref{eq:main-estimate}, Cauchy--Schwarz, and the
	corresponding bounds for the extrapolated error, we obtain
	\begin{equation}\label{eq:Nu-hminus-pressure-sum}
		\dt\sum_{n=1}^{N-1}\norm{\calN_u^{n+1}}_{H_h^{-1}}^2
		\le C_T(\dt^4+h^4).
	\end{equation}
	The decomposition of \(\calC_u^{n+1}\) gives
	\begin{equation*}
		\norm{\calC_u^{n+1}}_{H_h^{-1}}^2
		\le C_T\left(\norm{D_he_\mu^{n+1}}_h^2+
		\norm{D_he_\phi^n}_h^2
		+\norm{D_he_\phi^{n-1}}_h^2\right),
	\end{equation*}
	Together with
	\eqref{eq:main-estimate}, \eqref{eq:Nu-hminus-pressure-sum}, and
	Lemma~\ref{lem:consistency}, this gives
	\begin{equation}\label{eq:velocity-time-difference-bound}
		\dt\sum_{n=1}^{N-1}\norm{D_{2\tau}e_u^{n+1}}_{H_h^{-1}}^2
		\le C_T(\dt^4+h^4),
	\end{equation}
	and, after multiplying the inf-sup estimate by \(\dt\) and summing over
	\(n=1,\ldots,N-1\),
	\begin{equation*}
		\dt\sum_{n=1}^{N-1}\norm{e_p^{n+1}}_h^2
		\le C_T(\dt^4+h^4).
	\end{equation*}
	The level \(n=1\) is covered by Assumption~\ref{ass:start-up}. Adding
	\(\norm{R_c p(t_n)-R_p p(t_n)}_h\le Ch^2\) from
	Lemma~\ref{lem:mac-projections} proves \eqref{eq:pressure-estimate}.
\end{proof}

\section{Numerical experiments}

A smooth manufactured solution on \(\Omega=(0,1)^2\) is used to verify the
predicted convergence rate under the boundary conditions considered in the
analysis. The exact solution is
\begin{align*}
	u_1(x,y,t)
	 & =
	0.10\cos t\,\sin^2(\pi x)\sin(2\pi y),  \\
	u_2(x,y,t)
	 & =
	-0.10\cos t\,\sin(2\pi x)\sin^2(\pi y), \\
	p(x,y,t)
	 & =
	0.10\sin t\,\sin(2\pi x)\sin(2\pi y),   \\
	\phi(x,y,t)
	 & =
	0.20\cos t\,\cos(2\pi x)\cos(2\pi y),   \\
	\mu(x,y,t)
	 & =
	(8\pi^2\eps^2-1)\phi(x,y,t)+\phi(x,y,t)^3 .
\end{align*}
The velocity field is divergence-free and satisfies the no-slip boundary
condition, while $\partial_{\mathbf n}\phi = \partial_{\mathbf n}\mu = 0$
on \(\partial\Omega\). The forcing terms are obtained by substituting the
exact solution into \eqref{eq:continuous-chns}.

We take $T=\frac1{16}, \eps=0.1, \nu=0.5,$ and use uniform grids with
\(N_g=64,128,256\) cells in each coordinate direction. The time step is chosen
as \(\dt=h/2\), so that spatial and temporal errors are refined
simultaneously. The first two BDF2 levels are initialized from the exact
solution.

The errors reported in
Table~\ref{tab:manufactured-convergence}
are measured in the MAC norms of
Theorem~\ref{thm:main-convergence}, using physical restrictions rather than
projection errors:
\[
	E_\phi^n
	=
	R_c\phi(t_n)-\phi_h^n,
	\qquad
	E_\mu^n
	=
	R_c\mu(t_n)-\mu_h^n,
\]
\[
	E_u^n
	=
	R_e\bfu(t_n)-\bfu_h^n,
	\qquad
	E_p^n
	=
	R_cp(t_n)-p_h^n.
\]

\begin{table}[t]
	\caption{Convergence results for the manufactured solution test.}
	\label{tab:manufactured-convergence}
	\centering
	\scriptsize
	\begin{tabular}{@{}lcccccc@{}}
		\hline
		Error                                                    & \(N_g=64\)              & rate & \(N_g=128\)             & rate & \(N_g=256\)             & rate \\
		\hline
		\(\max_n\norm{E_\phi^n}_{H_h^1}\)                        & \(2.508\times 10^{-3}\) & --   & \(8.075\times 10^{-4}\) & 1.64 & \(2.227\times 10^{-4}\) & 1.86 \\
		\(\left(\dt\sum_n\norm{E_\mu^n}_{H_h^1}^2\right)^{1/2}\) & \(1.912\times 10^{-4}\) & --   & \(5.485\times 10^{-5}\) & 1.80 & \(1.442\times 10^{-5}\) & 1.93 \\
		\(\max_n\norm{E_u^n}_h\)                                 & \(4.903\times 10^{-5}\) & --   & \(1.267\times 10^{-5}\) & 1.95 & \(3.209\times 10^{-6}\) & 1.98 \\
		\(\left(\dt\sum_n\norm{E_p^n}_h^2\right)^{1/2}\)         & \(1.286\times 10^{-6}\) & --   & \(3.720\times 10^{-7}\) & 1.79 & \(9.850\times 10^{-8}\) & 1.92 \\
		\hline
	\end{tabular}
\end{table}

As a second test, we consider the coalescence of two diffuse droplets in a
bounded domain without external forcing. This benchmark is widely used for CHNS
solvers with no-slip velocity and no-flux phase-field boundary conditions
\cite{AndersonMcFaddenWheeler1998,KimKangLowengrub2003,Kim2012}. The initial
velocity is zero, and the phase field consists of two hyperbolic-tangent
droplets embedded in a negative background.

We use \(N_g=128\), \(\dt=2\times10^{-3}\), \(T=10\),
\(\eps=10^{-2}\), and \(\nu=10^{-2}\), together with forty pre- and
post-smoothing sweeps in each Stokes multigrid solve.
Figure~\ref{fig:coalescence-diagnostics} reports the corresponding diagnostics.
The discrete energy decays monotonically, while the maximum mass error and
discrete divergence are \(1.22\times10^{-12}\) and \(1.74\times10^{-9}\),
respectively. These results confirm the expected energy dissipation, mass
conservation, and incompressibility properties without any artificial mass
projection.

\begin{figure}[t]
	\centering
	\includegraphics[width=\textwidth]{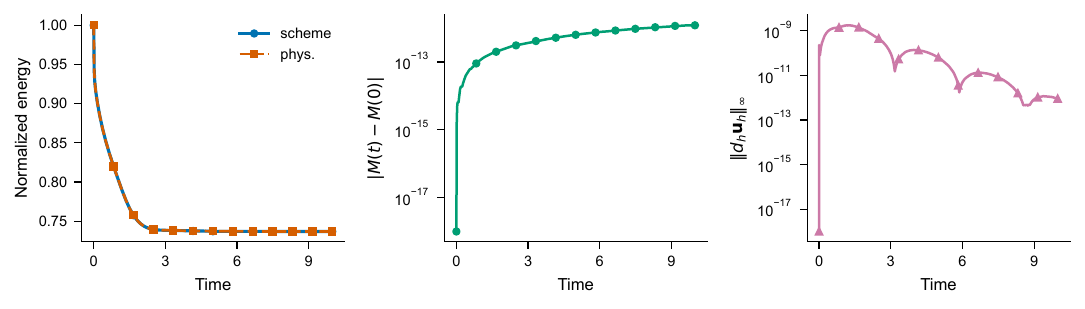}
	\caption{Global diagnostics for the droplet coalescence example. The discrete
		and physical energies are normalized by their initial values. The mass and
		divergence panels report the absolute mass error without projection and the
		maximum discrete divergence, respectively.}
	\label{fig:coalescence-diagnostics}
\end{figure}

\section{Conclusion}

We constructed and analyzed a novel implicit CS-BDF2 scheme for the CHNS system. The BDF2 bulk-energy identity
in Lemma~\ref{lem:bdf2-algebra} supplies the energy difference in the
chemical-potential equation, and discrete summation-by-parts gives the
transport--capillary cancellation. The scheme is mass conservative, uniquely
solvable, and unconditionally energy stable.

Under Assumptions~\ref{ass:start-up} and~\ref{ass:smooth-solution}, and under
the small time-step condition used in the error analysis, the method is
second-order accurate for the phase variable, chemical potential, velocity, and
pressure in the discrete norms of Theorem~\ref{thm:main-convergence}. The proof
uses summation-by-parts identities, high-norm phase estimates, nonlinear bounds,
and pressure recovery. Extension to
three-dimensional CHNS is not addressed here; the Navier--Stokes convection term
would require additional estimates.

\appendix

\section{Proof of the MAC convection estimates}\label{app:mac-convection-estimates}

\begin{proof}[Proof of Lemma~\ref{lem:mac-convection-estimates}]
	The averages in \eqref{eq:mac-averages} are bounded in the corresponding
	discrete \(\ell^p\) norms. From the pointwise convection form and skew identity in
	Lemma~\ref{lem:mac-convection-form}, for \(d_h\bfu_h=0\) one has
	\begin{align}
		|b_h(\bfu_h,\boldsymbol\xi_h,\boldsymbol\eta_h)|
		 & \le C\norm{\bfu_h}_{\ell^4}
		\norm{D_h\boldsymbol\xi_h}_h
		\norm{\boldsymbol\eta_h}_{\ell^4},
		\label{eq:app-conv-bound-l4}                        \\
		|b_h(\bfu_h,\boldsymbol\xi_h,\boldsymbol\eta_h)|
		 & \le C\norm{\bfu_h}_{\ell^\infty}
		\norm{D_h\boldsymbol\xi_h}_h
		\norm{\boldsymbol\eta_h}_h,
		\label{eq:app-conv-bound-linf}                      \\
		|b_h(\bfu_h,\boldsymbol\xi_h,\boldsymbol\eta_h)|
		 & \le C\left(\norm{\boldsymbol\xi_h}_{\ell^\infty}
		+\norm{D_h\boldsymbol\xi_h}_{\ell^\infty}\right)
		\norm{\bfu_h}_h\norm{D_h\boldsymbol\eta_h}_h .
		\label{eq:app-conv-bound-smooth}
	\end{align}
	By Lemma~\ref{lem:mac-convection-form},
	\begin{equation}\label{eq:app-skew-polarized}
		b_h(\bfu_h,\bfv_h,\boldsymbol z_h)
		=-b_h(\bfu_h,\boldsymbol z_h,\bfv_h).
	\end{equation}
	With \(\boldsymbol z_h=\bfw_h\), \eqref{eq:app-conv-bound-l4} and
	Lemma~\ref{lem:mac-discrete-estimates} with \(p=4\) give
	\begin{align*}
		|b_h(\bfu_h,\bfv_h,\bfw_h)|
		 & =|b_h(\bfu_h,\bfw_h,\bfv_h)|                       \\
		 & \le C\norm{\bfu_h}_{\ell^4}\norm{D_h\bfw_h}_h
		\norm{\bfv_h}_{\ell^4}                                \\
		 & \le C\norm{\bfu_h}_h^{1/2}\norm{D_h\bfu_h}_h^{1/2}
		\norm{\bfv_h}_h^{1/2}\norm{D_h\bfv_h}_h^{1/2}
		\norm{D_h\bfw_h}_h .
	\end{align*}

	For the dual estimates, let \(\boldsymbol z_h\) be a staggered test field. From
	\eqref{eq:app-skew-polarized} and \eqref{eq:app-conv-bound-linf},
	\begin{equation}\label{eq:app-dual-a}
		|b_h(\bfu_h,\bfv_h,\boldsymbol z_h)|
		\le C\norm{\bfu_h}_{\ell^\infty}\norm{\bfv_h}_h
		\norm{D_h\boldsymbol z_h}_h,
	\end{equation}
	which gives the first \(H_h^{-1}\) estimate. For the second dual estimate,
	\eqref{eq:app-conv-bound-smooth} with
	\(\boldsymbol\xi_h=\bfv_h\) and
	\(\boldsymbol\eta_h=\boldsymbol z_h\) gives
	\begin{align*}
		|b_h(\bfu_h,\bfv_h,\boldsymbol z_h)|
		 & \le C\left(\norm{\bfv_h}_{\ell^\infty}
		+\norm{D_h\bfv_h}_{\ell^\infty}\right)
		\norm{\bfu_h}_h\norm{D_h\boldsymbol z_h}_h,
	\end{align*}
	and hence
	\begin{equation}\label{eq:app-dual-b}
		|b_h(\bfu_h,\bfv_h,\boldsymbol z_h)|
		\le C\left(\norm{\bfv_h}_{\ell^\infty}
		+\norm{D_h\bfv_h}_{\ell^\infty}\right)
		\norm{\bfu_h}_h\norm{D_h\boldsymbol z_h}_h.
	\end{equation}
	Taking the supremum over \(\boldsymbol z_h\) gives the second dual estimate.
	For the last estimate, \eqref{eq:app-skew-polarized} and
	\eqref{eq:app-conv-bound-l4} give
	\[
		|b_h(\bfu_h,\bfv_h,\boldsymbol z_h)|
		\le C\norm{\bfu_h}_{\ell^4}\norm{\bfv_h}_{\ell^4}
		\norm{D_h\boldsymbol z_h}_h,
	\]
	and the last estimate follows by taking the supremum in
	\eqref{eq:hminusone-def}.
\end{proof}

\section{Proof of the discrete velocity projection estimate}\label{app:mac-leray}

\begin{proof}[Proof of Lemma~\ref{lem:mac-leray}]
	Let \(\pi_h\) be as in Lemma~\ref{lem:mac-leray}. Then
	\(d_hP_h\bfv_h=0\), and for
	\(\boldsymbol z_h\in Z_h\),
	\[
		(\bfv_h-P_h\bfv_h,\boldsymbol z_h)_h
		=(-D_h\pi_h,\boldsymbol z_h)_h=(\pi_h,d_h\boldsymbol z_h)_h=0.
	\]
	Thus \(P_h\) is the \(L^2_h\)-orthogonal projection onto \(Z_h\). In
	particular, \(P_h\) is self-adjoint. If \(q_h\in\mathsf C_h\), then
	\(\boldsymbol v_h=D_hq_h\) satisfies
	\[
		A_h\pi_h=d_hD_hq_h=-A_hq_h,
	\]
	so \(\pi_h=-(q_h-\overline{q_h})\), and therefore \(P_hD_hq_h=0\).

	We use the rectangular-grid Neumann estimate
	\begin{equation}\label{eq:mac-poisson-h2}
		\norm{D_hD_hr_h}_h\le C\norm{A_hr_h}_h,
		\qquad (r_h,1)_h=0,
	\end{equation}
	with \(C\) independent of \(h\). From Lemma~\ref{lem:mac-leray},
	\begin{equation}\label{eq:helmholtz-h1-stability}
		\norm{D_hP_h\bfv_h}_h
		\le \norm{D_h\bfv_h}_h+C\norm{D_hD_h\pi_h}_h
		\le C\norm{D_h\bfv_h}_h .
	\end{equation}
	Since \(P_h\) is self-adjoint and \eqref{eq:helmholtz-h1-stability} holds,
	\[
		\norm{P_h\bfv_h}_{H_h^{-1}}
		=\sup_{\boldsymbol w_h\ne0}
		\tfrac{(\bfv_h,P_h\boldsymbol w_h)_h}{\norm{D_h\boldsymbol w_h}_h}
		\le C\norm{\bfv_h}_{H_h^{-1}}.
	\]
	Also,
	\[
		\norm{A_h\bfv_h}_{H_h^{-1}}
		=\sup_{\boldsymbol w_h\ne0}
		\tfrac{(D_h\bfv_h,D_h\boldsymbol w_h)_h}{\norm{D_h\boldsymbol w_h}_h}
		\le \norm{D_h\bfv_h}_h .
	\]
	The three assertions in Lemma~\ref{lem:mac-leray} follow.
\end{proof}

\section{Proof of the weighted discrete Gronwall inequality}\label{app:gronwall-proofs}

\begin{proof}[Proof of Lemma~\ref{lem:weighted-gronwall}]
	Define \(A_m=a_m+\dt\sum_{r=1}^{m}b_r\) and
	\(S_m=\sum_{j=1}^{m}\alpha^{m-j}A_j\). Since \(a_j\le A_j\),
	the hypothesis of Lemma~\ref{lem:weighted-gronwall} gives
	\[
		A_\ell\le C_2+\dt\sum_{m=1}^{\ell-1}c_m S_m .
	\]
	The recursion \(S_\ell=A_\ell+\alpha S_{\ell-1}\) therefore implies, after
	expanding the contribution of each older index and using
	\(1+\alpha+\alpha^2+\cdots\le (1-\alpha)^{-1}\),
	\begin{equation*}
		S_\ell
		\le \tfrac{C_2}{1-\alpha}
		+\tfrac{\dt}{1-\alpha}\sum_{m=1}^{\ell-1}c_mS_m .
	\end{equation*}
	Let \(K=C_2/(1-\alpha)\) and
	\(d_m=\dt c_m/(1-\alpha)\). The preceding inequality has the explicit
	Volterra form
	\(S_\ell\le K+\sum_{m=1}^{\ell-1}d_mS_m\). Induction on \(\ell\) gives
	\[
		S_\ell\le K\prod_{m=1}^{\ell-1}(1+d_m)
		\le K\exp\left(\sum_{m=1}^{\ell-1}d_m\right)
		\le \tfrac{C_2}{1-\alpha}
		\exp\left(\tfrac{C_1}{1-\alpha}\right).
	\]
	Since \(A_\ell\le S_\ell\), this proves
	Lemma~\ref{lem:weighted-gronwall}.
\end{proof}

\raggedbottom
\section*{Acknowledgments}
Xuelong Gu's research is supported by an NSF award OIA-2242812. Qi Wang's research is
partially supported by NSF awards DMS-2038080 and OIA-2242812, a DOE award DE-SC0025229,
and an SC GEAR award.


\begin{thebibliography}{10}

\bibitem{Brower1}
{\sc G.~D. Akrivis, V.~A. Dougalis, and O.~A. Karakashian}, {\em {On fully discrete Galerkin methods of second-order temporal accuracy for the nonlinear Schr{\"o}dinger equation}}, Numer. Math., 59 (1991), pp.~31--53.

\bibitem{AndersonMcFaddenWheeler1998}
{\sc D.~M. Anderson, G.~B. McFadden, and A.~A. Wheeler}, {\em {Diffuse-interface methods in fluid mechanics}}, Annu. Rev. Fluid Mech., 30 (1998), pp.~139--165.

\bibitem{Chen-2020-CHNS}
{\sc L.~Chen and J.~Zhao}, {\em {A novel second-order linear scheme for the Cahn--Hilliard--Navier--Stokes equations}}, J. Comput. Phys., 423 (2020), p.~109782.

\bibitem{ChenWangYanZhang2019}
{\sc W.~Chen, X.~Wang, Y.~Yan, and Z.~Zhang}, {\em {A second order BDF numerical scheme with variable steps for the Cahn--Hilliard equation}}, SIAM J. Numer. Anal., 57 (2019), pp.~495--525.

\bibitem{Dahlquist1978}
{\sc G.~Dahlquist}, {\em {G-stability is equivalent to A-stability}}, BIT, 18 (1978), pp.~384--401.

\bibitem{DiegelWangWangWise2017}
{\sc A.~E. Diegel, C.~Wang, X.~Wang, and S.~M. Wise}, {\em {Convergence analysis and error estimates for a second order accurate finite element method for the Cahn--Hilliard--Navier--Stokes system}}, Numer. Math., 137 (2017), pp.~495--534.

\bibitem{Eyre1998}
{\sc D.~J. Eyre}, {\em {Unconditionally gradient stable time marching the Cahn--Hilliard equation}}, in MRS Proceedings, vol.~529, 1998, pp.~39--46.

\bibitem{Feng-2013}
{\sc X.~Feng, T.~Tang, and J.~Yang}, {\em {Stabilized Crank--Nicolson/Adams--Bashforth schemes for phase field models}}, E. Asian J. Appl. Math., 3 (2013), pp.~59--80.

\bibitem{Gong-2016-Binary}
{\sc Y.~Gong, X.~Liu, and Q.~Wang}, {\em {Fully discretized energy stable schemes for hydrodynamic equations governing two-phase viscous fluid flows}}, J. Sci. Comput., 69 (2016), pp.~921--945.

\bibitem{GongZhaoWang2018}
{\sc Y.~Gong, J.~Zhao, and Q.~Wang}, {\em {Second order fully discrete energy stable methods on staggered grids for hydrodynamic phase field models of binary viscous fluids}}, SIAM J. Sci. Comput., 40 (2018), pp.~B528--B553.

\bibitem{Gong-2018-CHNS}
{\sc Y.~Gong, J.~Zhao, X.~Yang, and Q.~Wang}, {\em {Fully discrete second-order linear schemes for hydrodynamic phase field models of binary viscous fluid flows with variable densities}}, SIAM J. Sci. Comput., 40 (2018), pp.~B138--B167.

\bibitem{EQ1}
{\sc F.~Guill{\'e}n-Gonz{\'a}lez and G.~Tierra}, {\em {On linear schemes for a Cahn--Hilliard diffuse interface model}}, J. Comput. Phys., 234 (2013), pp.~140--171.

\bibitem{GuoWangWiseYue2016}
{\sc J.~Guo, C.~Wang, S.~M. Wise, and X.~Yue}, {\em {An {$H^2$} convergence of a second-order convex-splitting, finite difference scheme for the three-dimensional {Cahn--Hilliard} equation}}, Commun. Math. Sci., 14 (2016), pp.~489--515.

\bibitem{HanWang2015}
{\sc D.~Han and X.~Wang}, {\em {A second order in time, uniquely solvable, unconditionally stable numerical scheme for Cahn--Hilliard--Navier--Stokes equation}}, J. Comput. Phys., 290 (2015), pp.~139--156.

\bibitem{HarlowWelch1965}
{\sc F.~H. Harlow and J.~E. Welch}, {\em {Numerical calculation of time-dependent viscous incompressible flow of fluid with free surface}}, Phys. Fluids, 8 (1965), pp.~2182--2189.

\bibitem{HuCheng2024}
{\sc X.~Hu and L.~Cheng}, {\em {Numerical analysis of a convex-splitting BDF2 method with variable time-steps for the Cahn--Hilliard model}}, J. Sci. Comput., 98 (2024), p.~18.

\bibitem{HuangShen2024NSCH}
{\sc F.~Huang and J.~Shen}, {\em {A class of IMEX schemes and their error analysis for the Navier--Stokes Cahn--Hilliard system}}, Ann. Math. Sci. Appl., 9 (2024), pp.~185--235.

\bibitem{Kim2012}
{\sc J.~Kim}, {\em {Phase-field models for multi-component fluid flows}}, Commun. Comput. Phys., 12 (2012), pp.~613--661.

\bibitem{KimKangLowengrub2003}
{\sc J.~Kim, K.~Kang, and J.~Lowengrub}, {\em {Conservative multigrid methods for Cahn--Hilliard fluids}}, J. Comput. Phys., 193 (2003), pp.~511--543.

\bibitem{LiShen2020NS}
{\sc X.~Li and J.~Shen}, {\em {Error analysis of the SAV-MAC scheme for the Navier--Stokes equations}}, SIAM J. Numer. Anal., 58 (2020), pp.~2465--2491.

\bibitem{LiShen2020}
\leavevmode\vrule height 2pt depth -1.6pt width 23pt, {\em {On a SAV-MAC scheme for the Cahn--Hilliard--Navier--Stokes phase-field model and its error analysis for the corresponding Cahn--Hilliard--Stokes case}}, Math. Models Methods Appl. Sci., 30 (2020), pp.~2263--2297.

\bibitem{LiShen2022MSAV}
\leavevmode\vrule height 2pt depth -1.6pt width 23pt, {\em {On fully decoupled MSAV schemes for the Cahn--Hilliard--Navier--Stokes model of two-phase incompressible flows}}, Math. Models Methods Appl. Sci., 32 (2022), pp.~457--495.

\bibitem{LiShenRui2019}
{\sc X.~Li, J.~Shen, and H.~Rui}, {\em {Energy stability and convergence of SAV block-centered finite difference method for gradient flows}}, Math. Comp., 88 (2019), pp.~2047--2068.

\bibitem{LiaoJiWangZhang2022}
{\sc H.-L. Liao, B.~Ji, L.~Wang, and Z.~Zhang}, {\em {Mesh-robustness of an energy stable BDF2 scheme with variable steps for the Cahn--Hilliard model}}, J. Sci. Comput., 92 (2022), p.~52.

\bibitem{RuiLi2017}
{\sc H.~Rui and X.~Li}, {\em {Stability and superconvergence of MAC scheme for Stokes equations on nonuniform grids}}, SIAM J. Numer. Anal., 55 (2017), pp.~1135--1158.

\bibitem{ShenXu2018}
{\sc J.~Shen and J.~Xu}, {\em {Convergence and error analysis for the scalar auxiliary variable (SAV) schemes to gradient flows}}, SIAM J. Numer. Anal., 56 (2018), pp.~2895--2912.

\bibitem{ShenYang2010}
{\sc J.~Shen and X.~Yang}, {\em {Numerical approximations of Allen--Cahn and Cahn--Hilliard equations}}, Discrete Contin. Dyn. Syst., 28 (2010), pp.~1669--1691.

\bibitem{ShenYang2015}
\leavevmode\vrule height 2pt depth -1.6pt width 23pt, {\em {Decoupled, energy stable schemes for phase-field models of two-phase incompressible flows}}, SIAM J. Numer. Anal., 53 (2015), pp.~279--296.

\bibitem{Brower2}
{\sc P.~Wang and C.~Huang}, {\em {An energy conservative difference scheme for the nonlinear fractional Schr{\"o}dinger equations}}, J. Comput. Phys., 293 (2015), pp.~238--251.

\bibitem{WeiserWheeler1988}
{\sc A.~Weiser and M.~F. Wheeler}, {\em {On convergence of block-centered finite differences for elliptic problems}}, SIAM J. Numer. Anal., 25 (1988), pp.~351--375.

\bibitem{XueZhaiZhao2024}
{\sc Z.~Xue, S.~Zhai, and X.~Zhao}, {\em {Energy dissipation and evolutions of the nonlocal Cahn--Hilliard model and space fractional variants using efficient variable-step BDF2 method}}, J. Comput. Phys., 510 (2024), p.~113071.

\bibitem{YanChenWangWise2018}
{\sc Y.~Yan, W.~Chen, C.~Wang, and S.~M. Wise}, {\em {A second-order energy stable BDF numerical scheme for the Cahn--Hilliard equation}}, Commun. Comput. Phys., 23 (2018), pp.~572--602.

\bibitem{Zhao-2021-CHNS}
{\sc J.~Zhao and D.~Han}, {\em {Second-order decoupled energy-stable schemes for Cahn--Hilliard--Navier--Stokes equations}}, J. Comput. Phys., 443 (2021), p.~110536.

\end{thebibliography}
\end{document}